\pgfplotsset{compat=1.13}
\newenvironment{enumalphii}
{\begin{enumerate}}
{\end{enumerate}}
\renewcommand*{\backref}[1]{}
\renewcommand*{\backrefalt}[4]{%
  \ifcase #1 %
    \relax
  \or
    $\uparrow$#2.%
  \else
    $\uparrow$#2.%
  \fi%
}
\newcommand{\mathbbords}{\mathbb}
\newcommand{\Z}{{\mathbbords{Z}}}
\newcommand{\F}{\mathbbords{F}}
\newcommand{\Fpbar}{\overline{\mathbb{F}}_p}
\DeclareMathOperator{\GL}{GL}
\renewcommand{\P}{\ensuremath{\mathbbords{P}}}
\newcommand*{\MAGMA}{\textsf{Magma}}
\newcommand{\defi}[1]{\emph{#1}}
\theoremstyle{plain}\newtheorem{ithm}{Theorem}
\theoremstyle{plain}
\newtheorem{theorem}{Theorem}[section]
\newtheorem{proposition}[theorem]{Proposition}
\newtheorem{lemma}[theorem]{Lemma}
\newtheorem{corollary}[theorem]{Corollary}
\theoremstyle{definition}
\newtheorem{definition}[theorem]{Definition}
\theoremstyle{remark}
\newtheorem{remark}[theorem]{Remark}
\title{On Invariants of Artin-Schreier Curves}
\author[J. Duque-Rosero]{Juanita Duque-Rosero}
\address{Juanita Duque-Rosero, Department of Mathematics and Statistics, Boston University, 665 Commonwealth Ave, Boston, MA 02215, USA}
\curraddr{}
\email{juanita@bu.edu}
\urladdr{\url{https://juanitaduquer.github.io}}
\author[H. Goodson]{Heidi Goodson}
\address{Heidi Goodson, Department of Mathematics, Brooklyn College; 2900 Bedford Avenue, Brooklyn, NY 11210, USA}
\curraddr{}
\email{heidi.goodson@brooklyn.cuny.edu}
\urladdr{\url{https://sites.google.com/site/heidigoodson}}
\author[E. Lorenzo Garc\'ia]{Elisa Lorenzo Garc\'ia}
\address{Elisa Lorenzo Garc\'ia, Institut de Math\'ematiques, Universit\'e de Neuch\^atel, Rue Emile-Argand 11, 2000, Neuch\^atel, Switzerland -- Laboratoire IRMAR, Office 602,
Universit\'e de Rennes 1, Campus de Beaulieu, 35042, Rennes Cedex}
\curraddr{}
\email{elisa.lorenzo@unine.ch, elisa.lorenzogarcia@univ-rennes1.fr }
\urladdr{\url{https://sites.google.com/site/elisalorenzo}}
\author[B. Malmskog]{Beth Malmskog}
\address{Beth Malmskog, Department of Mathematics and Computer Science, Colorado College, 14 E Cache la Poudre, Colorado Springs, CO 80903, USA}
\curraddr{}
\email{bmalmskog@coloradocollege.edu}
\urladdr{\url{https://malmskog.wordpress.com}}
\author[R. Scheidler]{Renate Scheidler}
\address{Renate Scheidler, Department of Mathematics and Statistics, University of Calgary, 2500 University Drive NW, Calgary, Alberta, Canada T2N 1N4}
\curraddr{}
\email{rscheidl@ucalgary.ca}
\urladdr{\url{https://cpsc.ucalgary.ca/~rscheidl}}
\begin{document}

\keywords{Artin-Schreier curves, geometric invariant theory, moduli spaces, automorphisms.}
\subjclass{11G20, 14L24, 13A50, 14D20, 14D22, 14G15, 14H10, 14H37, 14H45, 14Q05}

\begin{abstract} The main goal of this article is to expand the theory of invariants of Artin-Schreier curves by giving a complete classification in genus 3 and 4. To achieve this goal, we first establish standard forms of Artin-Schreier curves and determine all isomorphisms between curves in this form. We then compute reconstructing systems of invariants for curves in each connected component of the strata of the moduli spaces for Artin-Schreier curves of genus 3 and 4 for $p>2$. 
\end{abstract}

\maketitle

\section{Introduction} \label{sec:Intro}
Artin-Schreier theory derives from a 1927 paper of Emil Artin and Otto Schreier characterizing degree~$p$ Galois extensions of a  field of characteristic $p$ \cite{artin1927kennzeichnung}. Artin and Schreier proved that such an extension $K(y)/K$ is precisely the splitting field of a polynomial in $y$ of the form $y^p-y-\alpha$, where $\alpha \in K$ and $\alpha \neq \beta^p-\beta$ for any $\beta \in K$. Such extensions are now referred to as Artin-Schreier extensions, and the varieties defined by the associated polynomials are called Artin-Schreier curves. In this article, we aim to parameterize moduli spaces of Artin-Schreier curves in characteristic $p>2$ by computing invariants of the curves.

It is well-known that elliptic curve isomorphism classes are given by the their $j$-invariants. 
In genus 2, we can describe isomorphism classes of curves using Igusa invariants \cite{Igusa}. 
There are also examples of invariant computations in higher genus. For example, there are results for some genus 3 curves in \cite{Shioda,Dixmier,Ohno}, hyperelliptic curves in \cite{LR12}, Picard curves in \cite{KLS}, Ciani curves in \cite{BCKKLS}, and others. In general, these calculations are done over the complex numbers, but they usually extend well to fields of large characteristic (genus 2 curves in \cite{Liu}, genus 3 curves in \cite{basson}, \cite[Sec.\ 4]{LLLR}). The sources \cite[Sec.\ 14.5]{handbook} and \cite[Sec.\ 7.4.3]{LiuBook} specifically cover Artin-Schreier curves in characteristic 2, which are hyperelliptic. However, there are still many cases missing from the literature, including ``small cases" such as non-hyperelliptic curves of genus $g=3$ in characteristic $p=3$. 

Artin-Schreier curves have undergone intense research not only as objects of mathematical interest in their own right, but also for their applications to coding theory (see \cite{van1991artin} for example). For certain families of Artin-Scheier curves over a rational function field $K = \F_q(x)$, where $q$ is a power of $p$, it is possible to provide point counts, find their automorphism groups, and determine their zeta functions \cite{van1992reed}, \cite{BHMSSV}. Such families frequently contain curves with many points, including maximal curves with respect to the Weil bound. Investigations of Artin-Schreier point counts from the perspective of arithmetic statistics can be found in \cite{entin2012distribution},\cite{bucur2012distribution}, \cite{bucur2016statistics}. 

In this paper, we compute invariants for 
Artin-Schreier curves of genus 3 and 4 in characteristic $p>2$. 
Our work builds on that of Pries and Zhu, who determined the strata and dimensions of irreducible components of the moduli spaces of Artin-Schreier curves \cite{PriesZhu2012}. We seek to characterize these moduli spaces concretely by parameterizing them via invariants. Our main ideas are taken from Geometric Invariant Theory (GIT), which was first developed by Mumford in 1965 \cite{GIT}. This theory provides techniques for forming the ``quotient'' of an algebraic variety (or scheme) $X$ by a group $G$ and is especially useful for constructing moduli spaces as quotients of schemes parameterizing objects. We provide an overview of invariant theory in Section~\ref{sec:Invariants} and of Artin-Schreier curves, their moduli spaces, and their isomorphisms in Section~\ref{sec:AScurves}.

As in the genus 1 and 2 cases, it is useful to work with a standard form for Artin-Schreier curves in order to determine their invariants. Section~\ref{sec:StandardForm} develops a standard form for Artin-Schreier curves. We also describe the isomorphisms between standard forms in this section.
In Section~\ref{sec:DetailedExample}, we determine reconstructing systems of invariants for all Artin-Schreier curves of genus 3 and 4 for primes $p>2$, as listed in Table~\ref{tab:smallgenustable}.  This allows us to prove our main result, given in the following theorem.

\begin{ithm}\label{thm:mainTheorem}
A system of reconstructing invariants for all Artin-Schreier curves of genus $g=3,4$ in characteristic $p>2$ is given in Table~\ref{tab:invariants}.
\end{ithm}

For these curves, the strata and dimensions of the irreducible components of their moduli spaces are completely characterized in Table~\ref{tab:smallgenustable}.  The standard forms (see Theorem \ref{T:normal}) and invariants listed in Table~\ref{tab:invariants} are derived in the examples in Section~\ref{sec:DetailedExample}.

\begin{table}[ht]
\centering
\bgroup
\def\arraystretch{2.4}
\begin{tabular}{l|l|l||l||l}
$\bm{g}$ &$\bm{p}$& $\bm{s}$ & \text{\textbf{Standard form}}&\text{\textbf{Set of Reconstructing invariants over }}$\bm{\Fpbar}$   \\ \hline \hline  
$3$&$3$&$0$&$\displaystyle y^3-y=x^4+ax^2$& $\displaystyle \{a^4\}$\\\hline
$3$&$3$&$2$&$\displaystyle y^3-y=x^2+ax+\frac bx  $&$\displaystyle \{a^4,ab,b^4\}$\\\hline
$3$&$7$&$0$&$\displaystyle y^7-y=x^2$&$\emptyset$\\\hline
$4$&$3$&$0$&$\displaystyle y^3-y=x^5+cx^4+dx^2$&
$\substack{\displaystyle \{(c^3+d)^{10},(-cd-\epsilon^2)^5,(c^3+d)^2(-cd-\epsilon^2)\} \\\displaystyle\text{ where }\epsilon^3=c \medskip}$ \\\hline
$4$&$3$&$2$& $\displaystyle y^3-y=x^2+ax+\frac bx+\frac{c}{x^2}$&$\displaystyle \{c,ab,a^4c^2-b^4\}$\\\hline
$4$&$3$&$4$& $\displaystyle y^3-y=x^2+ax+\frac bx+\frac{c}{x-1}$&$\displaystyle \{(abc)^2,(abc)(a-b-c),ab+ac-bc\}$\\\hline
$4$&$5$&$0$&$\displaystyle y^5-y=x^3+ax^2$&$\displaystyle \{a^{12}\}$\\\hline
$4$&$5$&$1$&$\displaystyle y^5-y=x+\frac ax$&$\displaystyle \{a^2\}$ \medskip
\end{tabular}
\egroup
\caption{Reconstructing invariants for all Artin-Schreier curves of genus $g~=~3,4$ in characteristic $p>2$.  The curves are classified by their $p$-rank $s$.}
\label{tab:invariants}
\end{table}
The remainder of this paper is devoted to the proof of Theorem \ref{thm:mainTheorem}. We start by computing sets of invariants directly.  We then show that these sets allow us to reconstruct 
standard forms of Artin-Schreier curves.  Finally, 
standard invariant theoretic results summarized in Section~\ref{sec:Background} establish that these sets must generate the full invariant ring.

\subsection*{Acknowledgements}
The authors 
thank the Banff International Research Station for Mathematical Innovation and Discovery in Banff (Canada) for sponsoring the Women in Numbers~6 (WIN6) workshop and for providing a productive and enjoyable environment for our initial
work on this project. We would especially like to thank the organizers of WIN6, Shabnam Akhtari,
Alina Bucur, Jennifer Park and Renate Scheidler, for making the conference and this
collaboration possible. We 
are indebted to Rachel Pries for helpful conversations around this work. Last but not least, we wish to thank the anonymous referee for useful comments and suggestions.

The research of the first author is partially supported by the Simons Foundation grant \#550023. The research of the second author is in part supported by National Science Foundation Grant DMS-2201085 and by the School of Natural and Behavioral Sciences at Brooklyn College, City University of New York. The research of the third author is partially funded by the Melodia
ANR-20-CE40-0013 project and the 2023-08 Germaine de Sta\"el project. The research of the fourth author is supported by the National Science Foundation under grant DMS-2137661.  Lastly, the research of the fifth author is funded by the Natural Sciences and Engineering Research Council of Canada (NSERC) (funding RGPIN-2019-04844).

\section{Background and  preliminaries}\label{sec:Background}

In this section we recall the basic definitions and results on invariant theory and Artin-Schreier curves that we will require for the work herein.

\subsection{A brief introduction to invariant theory} \label{sec:Invariants} Here we mainly follow \cite{basson, DK02, DK, Dol03, Eis95, Eis05}.
The most important concepts in this section are the definitions of primary and secondary invariants. The main result is Corollary~\ref{cor:reconstruct} that will allow us to manually compute generating sets of invariants in the examples we consider in this paper. For completeness we also introduce the concept of Hilbert series and how to compute them with the Molien-Weyl formula (see Theorem \ref{thm:MF}). This produces a second method for computing invariants that can be use to double-check our computations. Moreover,  the algorithm  implemented in \MAGMA~\cite{magma} to compute invariants is also based on these ideas. Our computations in Section~\ref{sec:DetailedExample} can be corroborated by \MAGMA.

Let $K$ be an algebraically closed field. Let $G$ be a linear algebraic group defined over $K$, acting on an algebraic variety $X$ also defined over $K$. This action defines another action on $K[X]$ by $(g\cdot f)(x)=f(g^{-1}\cdot x)$ for all $x\in X$, $f\in K[X]$, and $g\in G$.

\begin{definition} An element $f\in K[X]$ is an \defi{invariant} for $X$ if $g\cdot f=f$ for all $g\in G$. The algebra of invariants is $K[X]^G\colonequals\{f\in K[X]:\,g\cdot f=f,\forall g\in G\}$.
\end{definition}

We especially consider here the case in which $X=V$ is a rational representation of $G$ of finite degree, i.e.\ a linear representation of finite dimension such that the group morphism $G\to \operatorname{GL}(V)$ is also a morphism of varieties. The $K$-algebras $K[V]$ and $K[V]^G$ are naturally graded $K$-algebras.

In our situation, $G$ will be finite.  Since all finite groups are reductive, we can use the following result due to Emmy Noether \cite{Noether1926}, here using the wording of \cite{DK02}.
\begin{proposition}[Noether Normalization Lemma] \label{lem:noether}
Let $R$ be a finitely generated algebra over a Noetherian commutative ring $K$, and let $G$ be a finite group acting on $R$ by automorphisms fixing $K$ elementwise.  Then $R^G$ is finitely generated as a $K$-algebra.
\end{proposition}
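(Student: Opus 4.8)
The plan is to use Emmy Noether's classical symmetric-function argument, which has the virtue of working over any Noetherian ground ring $K$ and in any characteristic, since it never averages over $G$ and hence never divides by $|G|$. Write $R=K[x_1,\dots,x_n]$ as a $K$-algebra generated by finitely many elements $x_1,\dots,x_n$. The strategy is to exhibit a well-chosen \emph{finitely generated} subalgebra $A\subseteq R^G$ over which $R$ is module-finite, and then to deduce finite generation of $R^G$ from Noetherianity.

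First I would form, for each generator $x_i$, the orbit polynomial
\[
P_i(T)=\prod_{g\in G}\bigl(T-g\cdot x_i\bigr)\in R[T].
\]
Because left multiplication by any $h\in G$ sends the multiset $\{g\cdot x_i:g\in G\}$ to $\{(hg)\cdot x_i:g\in G\}$, which is the same multiset, $G$ permutes the roots of $P_i$ and hence fixes each of its coefficients. Thus every coefficient $s_{i,j}$ of $P_i$ lies in $R^G$. Let $A$ be the $K$-subalgebra of $R^G$ generated by the finitely many elements $\{s_{i,j}\}$. By construction $A\subseteq R^G\subseteq R$, and since $K$ is Noetherian, Hilbert's basis theorem shows that the finitely generated $K$-algebra $A$ is itself Noetherian.

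Next I would observe that each $x_i$ is a root of the monic polynomial $P_i(T)\in A[T]$, so $x_i$ is integral over $A$. Since $R$ is generated as a $K$-algebra (hence as an $A$-algebra) by the finitely many integral elements $x_1,\dots,x_n$, it follows that $R$ is integral over $A$ and, being finitely generated, is a finitely generated $A$-module. As $A$ is Noetherian, $R$ is a Noetherian $A$-module, so its $A$-submodule $R^G$ is a finitely generated $A$-module; write $R^G=Af_1+\cdots+Af_m$. Combining the algebra generators of $A$ over $K$ with $f_1,\dots,f_m$ then yields a finite generating set for $R^G$ as a $K$-algebra, which is the claim.

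The only genuinely delicate point is the very first one: that the coefficients of the orbit polynomials are $G$-invariant and that these particular invariants already suffice to make every generator of $R$ integral over the subalgebra they generate. The rest is a standard chain of implications (integral plus finitely generated as an algebra $\Rightarrow$ module-finite; a submodule of a Noetherian module $\Rightarrow$ finitely generated). I would emphasize that nowhere does the argument invert $|G|$, so the conclusion holds even in the modular case where $p\mid|G|$ — precisely the setting of interest for Artin-Schreier curves in characteristic $p$.
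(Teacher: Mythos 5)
Your argument is correct: it is precisely Noether's original 1926 proof (orbit polynomials give finitely many invariants over which $R$ is module-finite; Hilbert's basis theorem plus the submodule argument then finishes), and it correctly avoids dividing by $|G|$, so it is valid in the modular case. The paper itself does not prove this proposition --- it is quoted from \cite{Noether1926} with the wording of \cite{DK02} --- so your write-up supplies exactly the standard argument that the paper outsources to its references.
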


Noether's result is a special case of the following.
\begin{theorem}[Hilbert Finiteness Theorem] If $G$ is a reductive algebraic linear group and $V$ is a rational representation of $G$ of finite degree, then $K[V]^G$ is of finite type. 
\end{theorem}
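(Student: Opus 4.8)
The plan is to follow Hilbert's original strategy, reducing the finite-generation statement to the Hilbert Basis Theorem together with the existence of a suitable projection onto invariants. Write $A=K[V]$, which is a polynomial ring and hence a Noetherian, $\N$-graded $K$-algebra with $A_0=K$; the invariant subalgebra $A^G=K[V]^G$ inherits this grading, and I write $A^G_+=\bigoplus_{d>0}A^G_d$ for its irrelevant ideal. First I would form the \emph{Hilbert ideal} $I\colonequals A\cdot A^G_+$, the ideal of $A$ generated by all homogeneous invariants of strictly positive degree. Since $A$ is Noetherian, $I$ is finitely generated, and because $I$ is generated by homogeneous invariants we may choose generators $f_1,\dots,f_n$ that are themselves homogeneous elements of $A^G_+$, of degrees $d_i\colonequals\deg f_i>0$.

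The heart of the argument is the claim that these finitely many invariants already generate the entire invariant algebra, i.e.\ $A^G=K[f_1,\dots,f_n]$. I would prove this by induction on the degree $d$ of a homogeneous invariant $f\in A^G_d$, the base case $d=0$ being $A^G_0=K$. For $d>0$ we have $f\in A^G_+\subseteq I$, so $f=\sum_{i=1}^n a_i f_i$ with each $a_i\in A$ homogeneous of degree $d-d_i<d$. This by itself does not conclude the induction, since the coefficients $a_i$ need not be invariant; the crucial step is to apply the \emph{Reynolds operator} $R\colon A\to A^G$, where reductivity enters. I would use the fact that, for reductive $G$, the $G$-module $A$ admits a graded, $G$-stable decomposition $A=A^G\oplus N$ (for finite $G$ with $|G|$ invertible in $K$ one may simply take $R(h)=\frac{1}{|G|}\sum_{g\in G}g\cdot h$), yielding a degree-preserving $K$-linear projection $R$ onto $A^G$ that satisfies the Reynolds identity $R(hf)=R(h)\,f$ for all $f\in A^G$.

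Applying $R$ to $f=\sum_i a_i f_i$ and using that each $f_i$ is invariant gives
\[
 f=R(f)=\sum_{i=1}^n R(a_i f_i)=\sum_{i=1}^n R(a_i)\,f_i,
\]
where each $R(a_i)\in A^G$ is a homogeneous invariant of degree $d-d_i<d$. By the inductive hypothesis each $R(a_i)$ lies in $K[f_1,\dots,f_n]$, and hence so does $f$. This closes the induction and shows that $A^G$ is generated as a $K$-algebra by $f_1,\dots,f_n$, so $K[V]^G$ is of finite type.

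The step I expect to be the genuine obstacle is establishing the Reynolds operator with its two defining properties — that it is a $K$-linear projection onto $A^G$ and that it is $A^G$-linear. For finite groups with $|G|$ invertible in $K$, and more generally in the linearly reductive setting (such as characteristic zero), this follows at once from averaging or from complete reducibility of $G$-representations. In positive characteristic, however, a reductive group need not be linearly reductive; one has only geometric reductivity (Haboush's theorem), and the naive projection may fail to exist, so overcoming this requires Nagata's refinement, in which the linear splitting is replaced by a weaker separation property and the inductive step is carried out after passing to suitable $p$-th powers. Since in the present paper $G$ is always finite — and the finite-group case is already covered by Proposition~\ref{lem:noether} regardless of characteristic — the elementary averaging operator suffices whenever $p\nmid|G|$, and the modular case is subsumed by Noether's theorem directly, without invoking the Reynolds operator at all.
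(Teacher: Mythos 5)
The paper does not prove this theorem at all: it is stated as standard background (with Noether's Proposition~\ref{lem:noether} noted as the special case actually used later), so there is no in-paper argument to compare against. Your proposal is the classical Hilbert strategy --- form the Hilbert ideal $I=A\cdot A^G_+$, extract finitely many homogeneous invariant generators $f_1,\dots,f_n$ by Noetherianity, and close the induction on degree by applying a Reynolds operator to the identity $f=\sum_i a_if_i$. That argument is correct and complete whenever a Reynolds operator exists, i.e.\ for linearly reductive $G$ (in particular for finite $G$ with $|G|$ invertible in $K$, or any reductive $G$ in characteristic zero).

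However, as a proof of the theorem \emph{as stated} there is a genuine gap, and you have located it yourself without filling it: over a field of positive characteristic --- which is the only setting this paper works in --- a reductive group need not be linearly reductive, so the $G$-stable splitting $A=A^G\oplus N$ and the $A^G$-linear projection $R$ need not exist, and the inductive step collapses. Saying that ``overcoming this requires Nagata's refinement'' names the missing ingredient but does not supply it; Nagata's argument via geometric reductivity (separating invariants only after raising to $p$-th powers, and reworking the induction accordingly) is a substantial piece of mathematics, not a routine patch of your induction. So the proposal proves the linearly reductive case and defers the general reductive case to an uncited external theorem. Your closing observation is the right one for this paper, though: since $G$ is always finite here, Proposition~\ref{lem:noether} already covers every application (including the modular case), and the full strength of the Hilbert Finiteness Theorem is never needed.
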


\begin{definition} Let $A=\oplus_{i\geq 0}A_i$ with $A_0=K$ be a graded $K$-algebra. A set of homogeneous elements $\theta_1,...,\theta_r\in A$ is a \defi{homogeneous system of parameters} (HSOP) if the following hold.
\begin{itemize}
\item $\theta_1,...,\theta_r$ are algebraically independent over $K$,
\item $A$ is a $K[\theta_1,...,\theta_r]$-module of finite type, i.e. there exist $\eta_1,...,\eta_s\in A$ such that
$$
A=\eta_1K[\theta_1,...,\theta_r]+...+\eta_sK[\theta_1,...,\theta_r].
$$
\end{itemize}
When $A=K[V]^G$ and the $\eta_i$ are homogeneous, the elements $\theta_i$ (resp.\ $\eta_i$) are the  \defi{primary (resp. secondary) invariants} of $A$.
\end{definition}

The Noether Normalization Lemma (Prop.\ \ref{lem:noether}) implies that a graded algebra of finite type always admits an HSOP. 

\begin{definition} The \defi{Hilbert series} of a graded $K$-algebra of finite type $A=\oplus_{i\geq0} A_i$ with $A_0=K$ is the power series
$$
H(A,t)=\sum_{i=0}^\infty \operatorname{dim}_K(A_i)t^i. 
$$
\end{definition}

If $A=\eta_1K[\theta_1,...,\theta_r]+...+\eta_sK[\theta_1,...,\theta_r]$, then 
$$
H(A,t)=\frac{\sum_{i=1}^st^{e_i}}{(1-t^{d_1})...(1-t^{d_r})},
$$
where $d_i$ (resp.\ $e_i$) is the degree of $\theta_i$ (resp.\ $\eta_i$).

\begin{theorem}[Molien-Weyl Formula, {\cite[\S~4.6.1]{DK02}}]\label{thm:MF}  The Hilbert series of a rational representation $(V,\rho)$ of finite degree of a compact group $G$ is
$$
H(K[V]^G,t)=\int_G\frac{1}{\operatorname{det}(\operatorname{Id}-\rho(g)t)}d\mu(g),
$$
where $d\mu$ is the Haar measure of $G$.
\end{theorem}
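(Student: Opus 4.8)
The plan is to prove this classical identity by reducing the computation of each graded dimension to a single application of the averaging (Reynolds) operator, and then summing a generating-function identity for symmetric powers. Throughout I take $K=\C$, so that ``compact group'' and ``Haar measure'' are meaningful; the finite-group case actually used in this paper is the specialization in which $\mu$ is the normalized counting measure and the integral becomes $\frac{1}{|G|}\sum_{g\in G}$. The overall strategy is to write $H(K[V]^G,t)$ as a sum over degrees of dimensions of invariant subspaces, express each such dimension as an integral of a character, and then resum.

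First I would identify the graded pieces. Since $K[V]=\bigoplus_{i\ge 0}\Sym^i(V^\vee)$ as a graded $G$-representation, with $G$ acting on degree-$i$ polynomials through $\Sym^i(\rho^\vee)$ for $\rho^\vee$ the contragredient of $\rho$ (this is forced by the action $(g\cdot f)(x)=f(g^{-1}x)$ fixed in the text), the Hilbert series is
$$H(K[V]^G,t)=\sum_{i\ge 0}\dim_\C\big((\Sym^i V^\vee)^G\big)\,t^i.$$
Next, for any finite-dimensional continuous representation $(W,\sigma)$ of the compact group $G$, the operator $P_W=\int_G\sigma(g)\,d\mu(g)$ (with $\mu$ normalized) is a $G$-equivariant idempotent with image $W^G$, by translation-invariance of Haar measure; since the trace of an idempotent equals the dimension of its image,
$$\dim_\C W^G=\Tr(P_W)=\int_G\Tr(\sigma(g))\,d\mu(g).$$
Applying this with $W=\Sym^i V^\vee$ and invoking the symmetric-power identity — if $A$ has eigenvalues $\lambda_1,\dots,\lambda_n$, then $\Sym^i A$ has as eigenvalues the degree-$i$ monomials in the $\lambda_j$, whence $\sum_{i\ge 0}\Tr(\Sym^i A)\,t^i=\prod_j(1-\lambda_j t)^{-1}=\det(\operatorname{Id}-tA)^{-1}$ — I would combine the three steps and interchange sum and integral to get
$$H(K[V]^G,t)=\int_G\sum_{i\ge 0}\Tr\big(\Sym^i\rho^\vee(g)\big)t^i\,d\mu(g)=\int_G\frac{d\mu(g)}{\det(\operatorname{Id}-t\,\rho^\vee(g))}.$$
Finally, to replace $\rho^\vee$ by $\rho$ and recover exactly the stated form, I would use that $\rho^\vee(g)={}^{t}\rho(g^{-1})$, that the determinant is transpose-invariant, and that Haar measure on a compact group is inversion-invariant; substituting $g\mapsto g^{-1}$ then turns the integrand into $\det(\operatorname{Id}-t\rho(g))^{-1}$.

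The main obstacle is analytic rather than algebraic: justifying the interchange of the infinite sum and the integral. Here I would first average an arbitrary Hermitian inner product over $G$ to obtain a $G$-invariant one, so that each $\rho(g)$ is unitary and its eigenvalues all have modulus $1$; then for $|t|<1$ the series $\sum_i\Tr(\Sym^i\rho^\vee(g))t^i$ is dominated uniformly in $g\in G$ by $\sum_i\binom{n+i-1}{i}|t|^i=(1-|t|)^{-n}$, where $n=\dim V$, which converges. Dominated convergence against the finite Haar measure then legitimizes the exchange, and since both sides are holomorphic in $t$ on the disk $|t|<1$, the resulting identity of power series is valid and, by uniqueness of Taylor coefficients, matches $H(K[V]^G,t)$ termwise. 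A secondary point that demands care is keeping the contragredient bookkeeping consistent across the argument, since it is precisely the appearance of $\rho^\vee$ — and its conversion back to $\rho$ via the inversion substitution — that distinguishes a correct statement from an off-by-a-dual error.
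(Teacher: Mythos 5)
Your proof is correct, and it is essentially the argument behind the paper's citation: the paper offers no proof of its own, deferring to \cite[\S~4.6.1]{DK02}, where the formula is likewise derived via the Reynolds (averaging) projector onto invariants, the trace-of-idempotent identity, and the generating function $\sum_{i\ge 0}\Tr(\Sym^i A)\,t^i=\det(\operatorname{Id}-tA)^{-1}$. Your handling of the contragredient (converting $\rho^\vee$ back to $\rho$ via transpose-invariance of $\det$ and inversion-invariance of Haar measure) and of the sum--integral interchange (unitarization plus dominated convergence for $|t|<1$) is careful and complete.
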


\begin{corollary} If $G$ is a finite group and $\operatorname{char}(K)$ does not divide $|G|$, then 
$$
H(K[V]^G,t)=\frac{1}{|G|}\sum_{g\in G}\frac{1}{\operatorname{det}(\operatorname{Id}-t\cdot g)}.
$$
\end{corollary}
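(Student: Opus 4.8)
The plan is to obtain the formula as a specialization of Theorem~\ref{thm:MF}, while giving a self-contained representation-theoretic argument that makes transparent the role of the hypothesis $\operatorname{char}(K)\nmid|G|$. A finite group $G$ with the discrete topology is compact, and its normalized Haar measure is the counting measure assigning mass $1/|G|$ to each element. Substituting this measure into the integral of Theorem~\ref{thm:MF} replaces $\int_G(\cdots)\,d\mu(g)$ by $\tfrac{1}{|G|}\sum_{g\in G}(\cdots)$, which is exactly the asserted formula. At the level of formal manipulation the corollary is therefore immediate; the substance lies in checking that the hypotheses behind Molien--Weyl survive over an arbitrary algebraically closed $K$, and this is precisely what the condition on $\operatorname{char}(K)$ guarantees.

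To make the argument independent of the analytic Molien--Weyl statement, I would argue directly. The central object is the Reynolds operator $R=\frac{1}{|G|}\sum_{g\in G}g$ acting on each graded piece $K[V]_i$. Because $\operatorname{char}(K)\nmid|G|$, the scalar $1/|G|$ exists in $K$, and $R$ is a $K$-linear idempotent with image $K[V]_i^G$: for every $h\in G$ one has $h\cdot R=R$, so the image consists of invariants, and $R$ restricts to the identity on $K[V]_i^G$. Since the trace of an idempotent equals the dimension of its image, this yields
$$
\dim_K K[V]_i^G=\operatorname{tr}\!\left(R\big|_{K[V]_i}\right)=\frac{1}{|G|}\sum_{g\in G}\operatorname{tr}\!\left(g\big|_{K[V]_i}\right).
$$

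It then remains to package these dimensions into the Hilbert series and to compute the trace generating function of a single element. Identifying $K[V]_i$ with $\Sym^i(V\dual)$ and recalling that the $G$-action is $(g\cdot f)(x)=f(g^{-1}x)$, I would diagonalize each $g$: as $g$ has finite order coprime to $\operatorname{char}(K)$, its minimal polynomial divides the separable polynomial $T^m-1$, so $g$ is semisimple over the algebraically closed field $K$, say with eigenvalues $\nu_1,\dots,\nu_n$ on $V\dual$. The eigenvalues of $g$ on $\Sym^i(V\dual)$ are then the degree-$i$ monomials in the $\nu_j$, whence $\sum_{i\ge0}\operatorname{tr}(g|_{\Sym^i(V\dual)})\,t^i=\prod_{j=1}^n(1-\nu_j t)^{-1}=\det(\operatorname{Id}-t\cdot g|_{V\dual})^{-1}$. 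Reindexing the outer sum over $G$ by the bijection $g\mapsto g^{-1}$ replaces the contragredient action on $V\dual$ by the original $\rho(g)$ on $V$, producing $\det(\operatorname{Id}-t\cdot g)^{-1}$ as written, and summing over $i$ and over $G$ gives the stated expression for $H(K[V]^G,t)$. The main obstacle is not any single computation but keeping the hypothesis $\operatorname{char}(K)\nmid|G|$ in play: it is needed once so that $R$ exists and is a projection (Maschke-type semisimplicity), and again so that each $g$ is diagonalizable and the eigenvalue bookkeeping on symmetric powers is valid.
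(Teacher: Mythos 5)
The paper offers no written proof of this corollary: it is stated immediately after Theorem~\ref{thm:MF} and is evidently intended to follow by specializing the Haar integral to the normalized counting measure on a finite (hence compact, discrete) group. Your first paragraph is exactly that argument, so on that score you match the paper. Your remaining paragraphs go further and give the classical self-contained proof of Molien's formula: the Reynolds operator $R=\frac{1}{|G|}\sum_{g\in G}g$ as a projection onto the invariants, semisimplicity of each $g$ (order prime to $\operatorname{char}(K)$, so the minimal polynomial divides a separable $T^m-1$), the eigenvalue bookkeeping on $\Sym^i(V\dual)$ giving $\sum_i\operatorname{tr}(g|_{\Sym^i(V\dual)})t^i=\det(\operatorname{Id}-t\cdot g|_{V\dual})^{-1}$, and the reindexing $g\mapsto g^{-1}$ to pass from the contragredient on $V\dual$ back to $\rho(g)$ on $V$. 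This is correct in outline and is essentially the argument in the source the paper cites. What it buys you is independence from the analytic Molien--Weyl statement, which is genuinely preferable here: the paper invokes that theorem for a ``compact group'' acting on a representation over an arbitrary algebraically closed field, where Haar measure is only literally meaningful in the finite/discrete case anyway.

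One step of your direct argument deserves care. Over a field of characteristic $p>0$, the trace of an idempotent equals its rank only as an element of $K$, i.e.\ modulo $p$; since $\dim_K K[V]_i^G$ can exceed $p$, your identity $\dim_K K[V]_i^G=\operatorname{tr}\bigl(R|_{K[V]_i}\bigr)$ holds in $K$ but not in $\mathbb{Z}$. As written, the direct argument therefore proves the displayed formula as an identity in $K[[t]]$, with the integer coefficients of the Hilbert series reduced mod $p$. To recover the actual integer dimensions one lifts the prime-to-$p$-order eigenvalues of each $g$ to characteristic zero (a Brauer lift) and evaluates the right-hand side there; this is how the non-modular Molien formula is usually formulated in positive characteristic. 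The corollary as printed is loose on the same point, so this is a refinement of the statement rather than an error peculiar to your proof, but it should be said explicitly if the formula is to be applied over $\Fpbar$.
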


\begin{lemma}[Noether, Fleischmann, Benson, Fogarty, {\cite[Cor. 3.8.4]{DK02}}]
\label{lem:bound} With the previous notation, if $\operatorname{char}(K)$ does not divide $|G|$, then there exists a set of generators of $K[X]^G$, all of degree smaller or equal than $|G|$.
\end{lemma}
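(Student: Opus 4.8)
The plan is to prove the sharper statement that the subalgebra $A\subseteq K[V]^G$ generated by all homogeneous invariants of degree at most $|G|$ is already equal to $K[V]^G$ (here $X=V$ is the representation, and I fix coordinates $K[V]=K[x_1,\dots,x_m]$ with $G$ acting linearly on the $x_i$, grading by total degree). Because $\operatorname{char}(K)$ does not divide $|G|$, the Reynolds operator $R(f)=\frac{1}{|G|}\sum_{g\in G}g\cdot f$ is well defined; it is a degree-preserving $K$-linear projection of $K[V]$ onto $K[V]^G$ satisfying the transfer identity $R(af)=a\,R(f)$ for every $a\in K[V]^G$. Since $R$ is surjective onto $K[V]^G$ and the monomials $x^\alpha$ span $K[V]$, the elements $R(x^\alpha)$ span $K[V]^G$ as a $K$-vector space, so it suffices to show $R(x^\alpha)\in A$ for every monomial $x^\alpha$.

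To access these invariants I would introduce auxiliary indeterminates $u_1,\dots,u_m$ fixed by $G$, work in $K[V][u_1,\dots,u_m]$, and use the generic linear form $\ell=u_1x_1+\cdots+u_mx_m$. The orbit $\{g\cdot\ell:g\in G\}$ consists of at most $|G|$ linear forms, and the coefficients $\sigma_1,\dots,\sigma_{|G|}$ of the monic orbit polynomial $\prod_{g\in G}(T-g\cdot\ell)$ are their elementary symmetric functions: each $\sigma_j$ is $G$-invariant and homogeneous of degree $j\le|G|$ in the $x_i$, so every coefficient of $\sigma_j$ as a polynomial in the $u_i$ is an invariant of degree at most $|G|$ and hence lies in $A$. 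For any $d$, the power sum $p_d=\sum_{g\in G}(g\cdot\ell)^d$ is a symmetric function of the orbit, so by the fundamental theorem of symmetric functions it is an integer-coefficient polynomial in $\sigma_1,\dots,\sigma_{|G|}$; consequently $R(\ell^d)=\frac{1}{|G|}p_d$ has all of its $u$-coefficients in $A$. Expanding $R(\ell^d)=\sum_{|\alpha|=d}\binom{d}{\alpha}u^\alpha R(x^\alpha)$ and comparing coefficients then shows $\binom{d}{\alpha}R(x^\alpha)\in A$ for every $\alpha$.

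The main obstacle is exactly this last comparison: the multinomial coefficient $\binom{d}{\alpha}$ need not be invertible in $K$ once $\operatorname{char}(K)$ is positive and no larger than $|G|$, so one cannot simply divide it out to conclude $R(x^\alpha)\in A$. This is why the single-form argument above yields Noether's bound only in characteristic $0$ or characteristic exceeding $|G|$. To cover the entire nonmodular range $\operatorname{char}(K)\nmid|G|$ I would replace $\ell$ by a full polarization, using $d$ independent generic forms $\ell_1,\dots,\ell_d$ so that the coefficient of $u_{1,i_1}\cdots u_{d,i_d}$ in $R(\ell_1\cdots\ell_d)$ recovers $R(x_{i_1}\cdots x_{i_d})$ with coefficient $1$, and then express this polarized orbit average through multisymmetric functions of the orbit of degree at most $|G|$, reducing by induction on degree. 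The required degree bound for these multisymmetric (vector) invariants in positive nonmodular characteristic is the genuinely delicate input; it is precisely the theorem of Fleischmann and Fogarty recorded in \cite[Cor.~3.8.4]{DK02}, whose conclusion we invoke.
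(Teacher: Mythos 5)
The paper does not actually prove this lemma: it is stated as a quoted result with the citation to \cite[Cor.~3.8.4]{DK02} serving as the entire justification. Measured against that, your proposal does strictly more work, and the part you carry out is correct: the Reynolds operator argument, the orbit polynomial of the generic linear form $\ell=u_1x_1+\cdots+u_mx_m$, the integrality of power sums in the elementary symmetric functions, and the resulting conclusion that $\binom{d}{\alpha}R(x^\alpha)\in A$ together give a complete and standard proof of Noether's bound when $\operatorname{char}(K)=0$ or $\operatorname{char}(K)>|G|$. You also correctly identify the precise obstruction (vanishing multinomial coefficients) that blocks this argument in the range $0<\operatorname{char}(K)\le |G|$ with $\operatorname{char}(K)\nmid |G|$.

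The genuine gap is in your final paragraph: for that remaining range you polarize and then say the needed degree bound for the multisymmetric orbit invariants ``is precisely the theorem of Fleischmann and Fogarty recorded in \cite[Cor.~3.8.4]{DK02}, whose conclusion we invoke.'' That is the statement being proved, so as a self-contained proof the argument is circular at exactly the point where the real content lies. The missing ingredient is the combinatorial identity (due to Fleischmann, with Benson's simplification) expressing, for $d\ge |G|$, the full orbit product $\sum_{g\in G}\prod_{j=1}^{d}(g\cdot \ell_j)$ as a $\mathbb{Z}$-linear combination of products of strictly lower-degree orbit sums, valid whenever $|G|$ is invertible in $K$; without proving that identity (or an equivalent), the non-modular case with small positive characteristic is not established. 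To be fair, this is exactly the case the paper itself also delegates to the literature, so your proposal and the paper end up resting on the same citation --- but your write-up presents itself as a proof, and at its crux it assumes the conclusion.
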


These results produce algorithms to compute primary and secondary invariants for actions of finite groups in the non-modular case (i.e.\ $\operatorname{char}(K)=p$ does not divide $|G|$). More precisely, see Algorithms 3.5.4 and 3.7.2 in \cite{DK02}. Moreover, a version of these algorithms is implemented in \MAGMA. They can be extended to work in the modular case as discussed in \cite[\S 3.3 and 3.4.2]{DK02}; specifically in Algorithm 3.7.5 in loc.\ cit.\ and in \cite{DK}. A general bound for the degree of the generators is given in \cite[Cor.\ 0.2]{symonds}. Again, some versions of these algorithms are also implemented in \MAGMA.

\begin{definition} A subset $S\subseteq K[X]^G$ is said to be \defi{separating} if it satisfies the following property. For any two points $x,y\in X$, if there exists an invariant $f\in K[X]^G$ with $f(x)\neq f(y)$, then there exists an element $g\in S$ with $g(x)\neq g(y)$.
\end{definition}

\begin{definition}
Let $A\subseteq K[X]$ be a subalgebra of a polynomial ring of positive characteristic $p$.
Then the algebra
$$
\hat{A}=\{ f\in K[X]:\,f^{p^r}\in A \text{ for some } r\in \mathbb{N}\}\subseteq K[X]
$$
is the \defi{purely inseparable closure} of A in $K[X]$.
\end{definition}

\begin{theorem}[{\cite[Theorem.~2.3.15]{DK02}}] Let $X$ be an affine variety and $G \subseteq \operatorname{Aut}(K[X])$ a subgroup of the automorphisms of the coordinate ring $K[X]$. Then there exists a finite
separating set $S\subseteq K[X]^G$.
\end{theorem}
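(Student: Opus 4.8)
The plan is to reduce the existence of a finite separating set to the Noetherian property of $K[X]\otimes_K K[X]$, via the standard ``diagonal'' reformulation of separation. The guiding observation is that a subset $S\subseteq K[X]^G$ is separating exactly when the functions $\{f\otimes 1-1\otimes f:f\in S\}$ cut out, as a set of $K$-points of $X\times X$, the same locus as the full family $\{f\otimes 1-1\otimes f:f\in K[X]^G\}$; so it suffices to produce finitely many invariants whose associated functions already carve out that locus.

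First I would pass to the product $X\times X$, whose coordinate ring is $K[X\times X]=K[X]\otimes_K K[X]$. Since $X$ is affine, $K[X]$ is a finitely generated $K$-algebra, hence Noetherian, and therefore so is $K[X]\otimes_K K[X]$ (it too is a finitely generated $K$-algebra). For each invariant $f\in K[X]^G$ set $\delta(f)=f\otimes 1-1\otimes f$, and let $I$ be the ideal of $K[X\times X]$ generated by the family $\{\delta(f):f\in K[X]^G\}$. Evaluation at a $K$-point $(x,y)$ is a $K$-algebra homomorphism and $\delta(f)(x,y)=f(x)-f(y)$, so the zero set $V(I)$ consists precisely of the pairs $(x,y)$ that no invariant separates, that is, those with $f(x)=f(y)$ for every $f\in K[X]^G$.

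Next I would invoke Noetherianity to finish. The ideal $I$ is finitely generated, say by $g_1,\dots,g_m$. Each $g_j$ lies in the ideal generated by the $\delta(f)$, so it is a finite $K[X\times X]$-linear combination of such elements; collecting the finitely many invariants $f_1,\dots,f_n$ appearing across all these expressions gives $I=(g_1,\dots,g_m)\subseteq(\delta(f_1),\dots,\delta(f_n))\subseteq I$, whence $\delta(f_1),\dots,\delta(f_n)$ already generate $I$. Setting $S=\{f_1,\dots,f_n\}\subseteq K[X]^G$, it follows that $V(I)$ is cut out by $\delta(f_1),\dots,\delta(f_n)$ alone, i.e.\ $(x,y)\in V(I)$ if and only if $f_i(x)=f_i(y)$ for all $i$.

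Finally I would verify that $S$ is separating: if $x,y\in X$ are separated by some invariant, then $(x,y)\notin V(I)$, so by the previous step $f_i(x)\neq f_i(y)$ for some $i$, which is exactly the separating condition. I expect the main conceptual step to be the reformulation on $X\times X$ together with the identification of $V(I)$ with the inseparable pairs; once this is in place the result is a direct consequence of the Noetherian property, and in particular it requires no reductivity or non-modularity hypothesis on $G$. The one point needing care is the passage from ``$I$ is finitely generated'' to ``finitely many of the distinguished generators $\delta(f)$ suffice,'' which is the short extraction argument carried out above.
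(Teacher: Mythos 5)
The paper gives no proof of this statement---it is quoted verbatim from Derksen--Kemper \cite[Thm.~2.3.15]{DK02}---and your argument is precisely the proof given there: pass to $K[X]\otimes_K K[X]$, let $I$ be the ideal generated by all $f\otimes 1-1\otimes f$ with $f$ invariant, use Noetherianity to extract finitely many of these distinguished generators, and observe that the corresponding invariants separate whatever the full invariant ring separates. The argument is correct, including the two points that need care (the extraction of finitely many $\delta(f)$'s from an arbitrary finite generating set of $I$, and the fact that no reductivity or finiteness hypothesis on $G$ is used).
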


\begin{theorem}[{\cite[Theorem. 2.3.12]{DK02}}]  Let $G$ be a finite group. Set  $K=\overline{\F}_p$ and let $V$ be a $K$-rational representation of~$G$.
Let $A\subseteq K[V]^G$ be a finitely generated, graded, separating subalgebra. Then $K[V]^G=\hat{\tilde{A}}$, the purely inseparable closure of the normalization of $A$.
\end{theorem}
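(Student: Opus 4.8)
The plan is to establish the two inclusions defining $K[V]^G=\hat{\tilde A}$ separately, exploiting the geometry of the morphism $\Spec(K[V]^G)\to\Spec(A)$ induced by $A\subseteq K[V]^G$. Write $B=K[V]^G$, so that $A\subseteq B\subseteq K[V]$ are graded domains with $A_0=B_0=K$ (here $K[V]$ is a polynomial ring, hence a domain). A preliminary observation I would record is that $B$ is normal: if $h\in\operatorname{Frac}(B)$ is integral over $B$, then it is integral over the polynomial ring $K[V]$, hence lies in $K[V]$ by normality of $K[V]$; being a ratio of invariants it is a $G$-invariant element of $K[V]$, so $h\in B$.

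The inclusion $\hat{\tilde A}\subseteq B$ is the easy direction. First I would check $\tilde A\subseteq B$: every element of the normalization $\tilde A$ lies in $\operatorname{Frac}(A)\subseteq\operatorname{Frac}(B)$ and is integral over $A\subseteq B$, hence integral over $B$, so it lies in $B$ by normality. Then, if $f\in K[V]$ satisfies $f^{p^r}\in\tilde A\subseteq B$, for every $g\in G$ we have $(g\cdot f)^{p^r}=g\cdot(f^{p^r})=f^{p^r}$, whence $(g\cdot f-f)^{p^r}=0$ in characteristic $p$; since $K[V]$ is a domain this forces $g\cdot f=f$, so $f\in B=K[V]^G$.

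For the reverse inclusion $B\subseteq\hat{\tilde A}$ I would proceed in two steps, and this is where the separating hypothesis and characteristic $p$ do the work. The first step is integrality: I claim $B$ is integral over $A$. Consider the nullcones $\mathcal N_A=\{v\in V:\ f(v)=0\ \forall f\in A_+\}$ and $\mathcal N_B$ defined analogously with $B_+$. Since $G$ is finite, $B$ separates $G$-orbits, and as the orbit of any $v\ne 0$ avoids $0$ one finds a positive-degree invariant not vanishing at $v$; hence $\mathcal N_B=\{0\}$. The separating property of $A$ then gives $\mathcal N_A\subseteq\mathcal N_B$, because a point of $\mathcal N_A$ agrees with $0$ on all of $A$ and so must agree with $0$ on all of $B$. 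Thus $\mathcal N_A=\{0\}$, so $\sqrt{A_+K[V]}=K[V]_+$, and graded Nakayama shows that $K[V]$, and a fortiori $B$, is a finite $A$-module, in particular integral over $A$. The second step is pure inseparability of $\operatorname{Frac}(B)/\operatorname{Frac}(A)$, which is a finite extension by the first step. Here I would use that the separating property makes the dominant, generically finite morphism $\Spec(B)\to\Spec(A)$ injective on closed points (two orbits with the same $A$-values have the same $B$-values, hence coincide); since over the algebraically closed field $K=\Fpbar$ the generic fiber of such a morphism has exactly $[\operatorname{Frac}(B):\operatorname{Frac}(A)]_{\mathrm{sep}}$ points, injectivity forces the separable degree to be $1$, i.e. the extension is purely inseparable. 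Combining the two steps, for $b\in B$ there is $r$ with $b^{p^r}\in\operatorname{Frac}(A)$, and $b^{p^r}$ is integral over $A$; hence $b^{p^r}\in\tilde A$ and therefore $b\in\hat{\tilde A}$.

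The hard part is this second step together with the correct bookkeeping of exactly which information the separating hypothesis carries. In characteristic $0$ the same argument would collapse the function-field extension entirely and yield $B=\tilde A$; the appearance of the purely inseparable closure is precisely the phenomenon that Frobenius twists and passage to the normalization are invisible to point-separation in characteristic $p$, and the delicate point is to verify that these are the only two corrections needed. I would also need to invoke carefully the standard but nontrivial input that a dominant, generically finite morphism over $K=\Fpbar$ has generic fiber cardinality equal to the separable degree of the corresponding function-field extension, which is the technical crux underlying the pure-inseparability conclusion.
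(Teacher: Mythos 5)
Your argument is correct, but note that the paper offers no proof of this statement to compare against: it is quoted (with a ``cf.'') from Derksen--Kemper \cite[Thm.~2.3.12]{DK02}, and the authors rely on that reference. What you have written is essentially a self-contained reconstruction of the standard proof from that source: normality of $K[V]^G$ and Frobenius give the easy inclusion $\hat{\tilde{A}}\subseteq K[V]^G$; the separating hypothesis forces the nullcone of $A$ to coincide with that of $K[V]^G$, whence finiteness of $K[V]$ over $A$ by the graded Nakayama argument; and injectivity of $\Spec(K[V]^G)\to\Spec(A)$ on closed points (orbits, since $G$ is finite) kills the separable degree of the fraction-field extension, yielding pure inseparability and hence $b^{p^r}\in\tilde{A}$ for every invariant $b$. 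The two standard inputs you flag --- that invariants of a finite group separate orbits in any characteristic (use products over the group of a function that is identically $1$ on one orbit and $0$ on the other, not averages), and that a finite dominant morphism of integral varieties over $\Fpbar$ has generic fiber cardinality equal to the separable degree --- are exactly the right ones, and both hold as stated; so there is no gap, only reliance on facts the cited reference also takes as known.
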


\begin{corollary}\label{cor:reconstruct}
   Let $X$ be an affine variety over $K=\overline{\F}_p$  and let $G \subseteq \operatorname{Aut}(K[X])$ be a finite subgroup.  
There exists $A\subseteq K[X]^G$ generated by a finite number of invariants from which one can reconstruct a point on $X$ from its values. Moreover, for any such $A$, one has $K[X]^G=\hat{\tilde{A}}$, the purely inseparable closure of the normalization of~$A$.
\end{corollary}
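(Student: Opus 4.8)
The plan is to deduce this corollary directly from the preceding theorem by checking that the hypotheses of the corollary imply those of the theorem, so that the theorem can be applied verbatim. That theorem asks for $A$ to be a \emph{finitely generated, graded, separating} subalgebra of $K[V]^G$. Finite generation is immediate from the hypothesis, and gradedness is automatic once we observe that in our situation the chosen invariants are homogeneous (as in every entry of Table~\ref{tab:invariants}), so the subalgebra they generate inherits the grading of $K[V]^G$. Consequently the entire content of the argument reduces to a single point: verifying that the chosen set of generators is separating.

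To carry this out I would first make the reconstruction hypothesis precise. Writing $f_1,\dots,f_n$ for the invariants generating $A$, the assumption that one can reconstruct a point of $X$ from the values $f_1(x),\dots,f_n(x)$ means exactly that the evaluation map $\phi\colon X\to\A^n$, $x\mapsto(f_1(x),\dots,f_n(x))$, determines the $G$-orbit of $x$; equivalently, if $x$ and $y$ lie in distinct $G$-orbits then $\phi(x)\neq\phi(y)$, so some $f_i$ takes different values at $x$ and $y$. With this reformulation the separating property follows in one step. Suppose $x,y\in X$ and some invariant $f\in K[V]^G$ satisfies $f(x)\neq f(y)$. Since every invariant is constant on $G$-orbits, $x$ and $y$ must lie in distinct orbits; by the reconstruction hypothesis there is then an index $i$ with $f_i(x)\neq f_i(y)$, and $f_i\in A$. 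This is precisely the statement that $\{f_1,\dots,f_n\}\subseteq A$ is separating, and invoking the preceding theorem yields $K[V]^G=\hat{\tilde{A}}$.

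The step demanding genuine care is the interpretation of the reconstruction hypothesis in characteristic $p$. What reconstruction delivers is the orbit (equivalently, a standard form representing the isomorphism class), not the literal coordinates of $x$, and in positive characteristic the evaluation map need not be injective even on orbits because of purely inseparable phenomena. The crucial observation is that the separating condition is \emph{weaker} than such injectivity: it requires only that any two orbits distinguished by some invariant are already distinguished by the generators, which is exactly what recovering the orbit from the values of $f_1,\dots,f_n$ provides. The residual inseparability is absorbed by passing to the purely inseparable closure $\hat{\tilde{A}}$ in the conclusion, which is precisely why the corollary recovers the full invariant ring $K[V]^G$ rather than merely $A$ itself.
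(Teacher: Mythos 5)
Your proof is correct and takes the same route the paper intends: the corollary is stated there without proof as an immediate consequence of the preceding theorem, and the one substantive step --- that a reconstructing set of invariants is separating, argued via the contrapositive through $G$-orbits --- is exactly what you supply. Your remarks on gradedness and on purely inseparable phenomena being absorbed into $\hat{\tilde{A}}$ accurately reflect how the paper uses the result.
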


A set of invariants that allow reconstruction of a point as described in Corollary~\ref{cor:reconstruct} is referred to as a \defi{reconstructing system}.

\subsection{Artin-Schreier curves}\label{sec:AScurves}

Throughout, let $p$ be a prime and $\Fpbar$ an algebraically closed field of characteristic~$p$. An \defi{Artin-Schreier curve} is a curve over $\Fpbar$ with an affine model of the form 
\begin{equation}\label{eqn:ASgeneralform}
C_f : y^p-y=f(x),
\end{equation}
where $f(x)\in\Fpbar(x)$ and $f(x) \ne z^p - z$ for any $z \in \Fpbar(x)$. Let $r+1$ (with $r \ge 0)$ be the number of distinct poles of $f(x)$, and denote by $d_i$ be the order of the $i$-th pole of $f(x)$. By \cite[Lemma 3.7.7 (b)]{stichtenoth2009algebraic}, or as we describe in the proof of Theorem \ref{T:normal}, we can assume that no $d_i$ is a multiple of $p$. To simplify formulae, set 
\[ e_i \colonequals d_i+1 \qquad (1 \le i \le r+1) . \]
Then by \cite[Proposition 3.7.8 (d)]{stichtenoth2009algebraic}, the genus of $C_f$ is given by 
\begin{equation}\label{eq:ASgenus}
    g=\frac{p-1}{2}D, \quad \mbox{where} \quad D = -2+\sum_{i=1}^{r+1}e_i. 
\end{equation}
The possible pole counts and orders for $C_f$ thus correspond to partitions of $D+2$ into components $e_i$.

The \emph{$p$-rank} of a smooth, irreducible, projective $\Fpbar$-curve $X$ is the integer $s$ such that the cardinality of the $p$-torsion of its Jacobian $\textrm{Jac}(X)[p](\Fpbar)$ is $p^s$. Then $0\leq s\leq g$, and the Deuring-Shafarevich formula (see \cite{subrao1975p} for a full discussion) implies that for an Artin-Schreier curve $X = C_f$, we have
\begin{equation}\label{eq:ASprank}
    s=r(p-1).
\end{equation}
Thus, the $p$-rank of $C_f$ depends simply on the number of poles of $f(x)$, not the poles themselves or their orders.

Let $\mathcal{AS}_{g}$ denote the moduli space of Artin-Schreier $\Fpbar$-curves of genus $g$ and $\mathcal{AS}_{g,s}$ the locus corresponding to Artin-Schreier $\Fpbar$-curves of genus $g$ with $p$-rank exactly $s$. The following theorem, due to Pries and Zhu~\cite{PriesZhu2012}, characterizes the stratification of $\mathcal{AS}_{g}$ by $p$-rank.

\begin{theorem}[{\cite[Theorem~1.1]{PriesZhu2012}}]\label{theorem:PriesZhuDimension}
    Let $g=D(p-1)/2$ with $D\geq 1$ and $s=r(p-1)$ with $r\geq 0$.
    \begin{enumerate}
        \item The set of irreducible components of $\mathcal{AS}_{g,s}$ is in bijection with the set of partitions $\{e_1, \ldots, e_{r+1}\}$ of $D+2$ into $r+1$ positive integers such that each $e_j\not\equiv 1 \pmod p$.
        \item The irreducible component $\mathcal{AS}_{g,\vec{E}}$ of $\mathcal{AS}_{g,s}$ for the partition $\vec{E}=\{e_1, \ldots, e_{r+1}\}$ has dimension 
        \begin{equation}\label{eq:PriesZhuDimension}
            \dim \mathcal{AS}_{g,\vec{E}}=D-1-\sum_{j=1}^{r+1} \lfloor(e_j-1)/p\rfloor.
        \end{equation}
    \end{enumerate}
\end{theorem}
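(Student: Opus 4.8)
The plan is to realize each stratum $\mathcal{AS}_{g,s}$ concretely as a quotient of an explicit parameter space of ``reduced'' Artin-Schreier equations, and to read off both the combinatorial labelling in part (1) and the dimension in part (2) from that description. First I would fix the $p$-rank, so that by the Deuring-Shafarevich relation \eqref{eq:ASprank} every curve in $\mathcal{AS}_{g,s}$ is $C_f$ for some $f\in\Fpbar(x)$ whose pole divisor has exactly $r+1$ distinct points. Two such functions define isomorphic curves precisely when they agree after (i) a fractional linear change of the $x$-coordinate in $\PGL_2(\Fpbar)$, (ii) a scaling $f\mapsto af$ with $a\in\F_p^\times$ (coming from $y\mapsto a^{-1}y$), and (iii) an Artin-Schreier shift $f\mapsto f+(z^p-z)$ with $z\in\Fpbar(x)$. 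Using (iii) together with \cite[Lemma 3.7.7]{stichtenoth2009algebraic}, I would put $f$ into a \emph{reduced normal form}: at each of the $r+1$ poles the principal part involves only monomials $c_j u^{-j}$ with $p\nmid j$ and $1\le j\le d_i$, the leading coefficient $c_{d_i}$ is nonzero, and the constant term is cleared (possible since $\wp(z)=z^p-z$ is surjective on $\Fpbar$). A key input here is that this reduced form is unique up to the residual action of (i) and (ii); this is where the arithmetic of $\wp$ enters, since any nonconstant $z$ forces a pole of order divisible by $p$ and hence cannot relate two reduced forms sharing the same pole data.

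For part (1), the reduced pole orders $d_i=e_i-1$ satisfy $p\nmid d_i$ by \cite[Lemma 3.7.7]{stichtenoth2009algebraic}, i.e.\ $e_i\not\equiv 1\pmod p$, and $\sum_{i=1}^{r+1}e_i=D+2$ by \eqref{eq:ASgenus}; thus the multiset $\{e_1,\dots,e_{r+1}\}$ is exactly a partition of $D+2$ of the stated type. I would show this multiset is an isomorphism invariant by identifying $d_i=e_i-1$ with the unique ramification break of the canonical $\Z/p$-cover $C_f\to C_f/(\Z/p)\cong\P^1$ at the $i$-th branch point (equivalently, reading $e_i$ off the different), which is intrinsic to the cover; hence distinct partitions cannot label the same component. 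Conversely, for a fixed partition the set of reduced $f$ is an irreducible variety --- a product of the (irreducible) configuration space of the $r+1$ poles with affine spaces of principal-part coefficients, intersected with the open conditions $c_{d_i}\neq 0$ --- and its image in $\mathcal{AS}_{g,s}$ is therefore irreducible. This yields the bijection between irreducible components and partitions.

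For part (2), I would count dimensions on the reduced parameter space before dividing by the group. The pole locations contribute $r+1$ parameters, and at the $i$-th pole the admissible exponents are the integers in $[1,d_i]$ coprime to $p$, of which there are $d_i-\lfloor d_i/p\rfloor$. Summing and using $\sum_i e_i=D+2$ gives a total of $(r+1)+\sum_i\bigl(d_i-\lfloor d_i/p\rfloor\bigr)=(D+2)-\sum_i\lfloor d_i/p\rfloor$. The group acting is $\PGL_2(\Fpbar)\times\F_p^\times$, of dimension $3$, so subtracting $3$ produces $D-1-\sum_i\lfloor d_i/p\rfloor=D-1-\sum_i\lfloor(e_i-1)/p\rfloor$, which is exactly \eqref{eq:PriesZhuDimension}.

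The hard part will be making this count rigorous rather than heuristic. Concretely, I must verify that the $\PGL_2$-action on the reduced family has finite generic stabilizer, so that the quotient genuinely drops dimension by $3$ (the principal-part data should rigidify all but finitely many automorphisms, but this needs checking, especially for small $r$ where the poles alone do not pin down a coordinate). I also need the passage from curves to their canonically associated $\Z/p$-covers to have finite fibers --- i.e.\ that a generic Artin-Schreier curve carries only finitely many $\Z/p$-subgroups in $\Aut(C_f)$ realizing it as such a cover --- so that the coarse moduli dimension agrees with the cover-moduli dimension. Phrasing the whole argument in the language of moduli stacks of $\Z/p$-covers, where ``dimension of the quotient $=$ dimension of the family $-$ dimension of the group'' holds on the nose, is the cleanest way to discharge these points and is the technical heart of the proof.
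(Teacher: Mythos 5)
First, a point of comparison: the paper does not prove this theorem at all --- it is quoted verbatim from Pries--Zhu \cite{PriesZhu2012}, so there is no internal proof to measure your attempt against. Your sketch is nonetheless a faithful outline of the standard strategy (parametrize reduced Artin--Schreier equations by pole configurations and principal-part coefficients with no $p$-divisible exponents, then quotient by $\PGL_2(\Fpbar)\times\F_p^\times$), and your dimension count is arithmetically correct: $(r+1)+\sum_i\bigl(d_i-\lfloor d_i/p\rfloor\bigr)-3=D-1-\sum_i\lfloor(e_i-1)/p\rfloor$, using $\sum_i d_i=(D+2)-(r+1)$.

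There is, however, a genuine gap in part (1) that you do not flag. Your argument produces, for each admissible partition $\vec{E}$, an irreducible locally closed subset of $\mathcal{AS}_{g,s}$, and the isomorphism-invariance of the ramification breaks shows these subsets are pairwise disjoint. That is not yet the statement that they are the irreducible \emph{components}: you must also rule out that the locus for one partition lies in the closure, taken inside $\mathcal{AS}_{g,s}$, of the locus for another partition of $D+2$ into the same number of parts. This is not vacuous --- for $p=5$, $D=8$, $r=1$ the loci for $\{8,2\}$ and $\{5,5\}$ have dimensions $6$ and $7$ respectively, and a priori the former could be a boundary stratum of the latter. Invariance of the breaks under isomorphism says nothing about their behaviour in families; what is needed is that the partition is locally constant on $\mathcal{AS}_{g,s}$ (equivalently, that each partition locus is closed there), which one gets from semicontinuity of the ramification invariants combined with the facts that their sum is pinned down by the genus and the number of branch points by the $p$-rank. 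This closedness is precisely the deformation-theoretic content of \cite{PriesZhu2012} and does not follow formally from your setup. The remaining issues you do identify yourself --- generic finiteness of $\PGL_2$-stabilizers (which needs a separate check for $r=0,1$, where the stabilizer of the pole configuration is a positive-dimensional Borel or torus acting on the coefficients; the hypothesis $D\ge 1$ forces $d_1\ge 2$ and saves the count) and finiteness of the set of $\Z/p$-quotients of a given curve (which follows from finiteness of $\Aut(C_f)$ together with \cite{ValentiniMadan}) --- are true but are left as assertions. As written, your argument establishes only that each partition locus is irreducible of the stated dimension, not the full classification of components.
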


So the irreducible components of the $p$-rank strata of the moduli space $\mathcal{AS}_{g}$ correspond to the different possibilities for the number of distinct poles of $f(x)$ and their orders, given the constraint in~\eqref{eq:ASgenus}.  Note that an irreducible component $\mathcal{C}$ of $\mathcal{AS}_{g,s}$ is not necessarily an irreducible component of $\mathcal{AS}_g$.  It may be that $\mathcal{C}$ is open in some higher-dimensional irreducible component of $\mathcal{AS}_g$. 

We compute the partitions corresponding to the irreducible components of $\mathcal{AS}_{g,s}$  for small~$g$ in Table~\ref{tab:smallgenustable}.  We omit $p=2$, because all Artin-Schreier curves in this characteristic are hyperelliptic; as mentioned earlier, their invariants and moduli are already well understood.  

\begin{table}[ht]
\centering
\bgroup
\def\arraystretch{1.16}
\begin{tabular}{l||l|l|l||c|c}
$\bm{g}$ & $\bm{p}$ & $\bm{D}$ & $\bm{s}$ & $\bm{\vec{E}}$&$\bm{\dim\mathcal{AS}_{g,\vec{E}}}$    \\ \hline\hline
3   & 3   & 3   & 0   & \{5\}& 1                     \\
    &     &     & 2   & \{3,2\}& 2                   \\
    & 7   & 1   & 0   & \{3\}& 0                     \\ \hline
4   & 3   & 4   & 0   & \{6\}& 2                     \\
    &     &     & 2   & \{3,3\}& 3                   \\
    &     &     & 4   & \{2,2,2\}& 3                 \\
    & 5   & 2   & 0   & \{4\}& 1                     \\
    &     &     & 4   & \{2,2\}& 1 
                   \\ 
\end{tabular} \medskip
\egroup 
\caption{Partitions $\vec{E}$ of $D+2$ corresponding to the irreducible components of $\mathcal{AS}_{g,s}$ and the dimension  of each irreducible component $\mathcal{AS}_{g,\vec{E}}$ for $g = 3, 4$ and $p\geq 3$.}
\label{tab:smallgenustable}
\end{table}

We now consider the isomorphisms of Artin-Schreier curves.
\begin{lemma}\label{lemma:isomorphisms}
Let $C_f$ and $\widetilde{C}_{\tilde{f}}$ be isomorphic Artin-Schreier curves as given in~\eqref{eqn:ASgeneralform}.  Assume that the function field isomorphism $\varphi:\Fpbar(\tilde{C})\to\Fpbar(C)$ fixes the $p$-extension $\Fpbar(C)/\Fpbar(x)$.  Then $\varphi$
must be of the form
\begin{equation}\label{eq:ASisom}
(x, y) \mapsto \left ( \frac{\alpha x + \beta}{\gamma x + \delta}, \lambda y + h(x) \right ) , 
\end{equation} 
where $\alpha, \beta, \gamma, \delta \in \Fpbar$ with $\alpha\delta-\beta\gamma \in \Fpbar^{\times}$, $\lambda \in \F_p^{\times}$, and $h(x) \in \Fpbar(x)$.
\end{lemma}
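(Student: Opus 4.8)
The plan is to analyze $\varphi$ one layer at a time along the degree-$p$ tower $\Fpbar(\tilde{x}) \subset \Fpbar(\tilde{C})$. The hypothesis that $\varphi$ fixes the $p$-extension means precisely that $\varphi$ carries the rational subfield onto the rational subfield, i.e. $\varphi(\Fpbar(\tilde{x})) = \Fpbar(x)$, and this compatibility is what allows the two coordinates in \eqref{eq:ASisom} to be read off separately.

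First I would treat the rational base. The restriction of $\varphi$ to $\Fpbar(\tilde{x})$ is an $\Fpbar$-isomorphism onto $\Fpbar(x)$, and since every $\Fpbar$-isomorphism between rational function fields in one variable is a M\"obius substitution (that is, $\Aut_{\Fpbar}(\Fpbar(x)) \cong \PGL_2(\Fpbar)$), we obtain $\varphi(\tilde{x}) = (\alpha x + \beta)/(\gamma x + \delta)$ for some $\alpha, \beta, \gamma, \delta \in \Fpbar$ with $\alpha\delta - \beta\gamma \in \Fpbar^{\times}$. This supplies the first coordinate.

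Next I would pin down $w \colonequals \varphi(\tilde{y})$. Writing $\wp(z) = z^p - z$ for the Artin-Schreier operator and applying $\varphi$ to the defining relation $\wp(\tilde{y}) = \tilde{f}(\tilde{x})$, the fact that $\varphi$ is a field homomorphism commuting with $p$-th powers gives $\wp(w) = \varphi(\tilde{f}(\tilde{x})) = \tilde{f}((\alpha x + \beta)/(\gamma x + \delta)) \in \Fpbar(x)$. Let $\sigma$ generate $\Gal(\Fpbar(C)/\Fpbar(x))$ via $\sigma(y) = y+1$ (the roots of $T^p - T - f(x)$ being $y+j$ for $j \in \F_p$). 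Since $\wp(w) \in \Fpbar(x)$ is fixed by $\sigma$ and $\sigma$ commutes with $\wp$, I get $\wp(\sigma(w)) = \wp(w)$, hence $\wp(\sigma(w) - w) = 0$ because $\wp$ is additive in characteristic $p$. As $\ker \wp = \F_p$, this forces $\sigma(w) - w = \lambda$ for some $\lambda \in \F_p$. Then $\sigma(w - \lambda y) = (w + \lambda) - \lambda(y+1) = w - \lambda y$, so $w - \lambda y$ is $\sigma$-invariant and therefore lies in $\Fpbar(x)$; setting $h(x) = w - \lambda y$ yields $w = \lambda y + h(x)$, the second coordinate of \eqref{eq:ASisom}.

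Finally I would rule out $\lambda = 0$: in that case $w = \varphi(\tilde{y}) \in \Fpbar(x)$, so the image of $\varphi$ would be contained in $\Fpbar(x) \subsetneq \Fpbar(C)$, contradicting surjectivity of $\varphi$; hence $\lambda \in \F_p^{\times}$. The main obstacle is the middle step: one must leverage the Galois structure of the Artin-Schreier extension, together with the additivity of $\wp$ and the fact that its kernel is exactly $\F_p$, in order to constrain $\varphi(\tilde{y})$ to the affine-linear shape $\lambda y + h(x)$. Everything else is either classical (the $\PGL_2(\Fpbar)$ description of the base) or a short surjectivity check.
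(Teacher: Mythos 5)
Your proof is correct, and for the second coordinate it takes a genuinely different route from the paper. Both arguments handle $\varphi(\tilde{x})$ identically: the hypothesis forces $\varphi$ to restrict to an $\Fpbar$-automorphism of the rational subfield, which is a M\"obius substitution. For $\varphi(\tilde{y})$, however, the paper simply posits that the image has the fractional-linear shape $\frac{A(x)y+B(x)}{C(x)y+D(x)}$ with $A,B,C,D \in \Fpbar(x)$ (on the grounds that $\varphi$ is invertible) and then reduces to $A=\lambda$, $C=0$, $D=1$ by substituting into the defining equation and comparing coefficients; this leaves a gap, since a priori $\varphi(\tilde{y})$ could be an arbitrary polynomial of degree $<p$ in $y$ over $\Fpbar(x)$, and the paper does not explain why $\lambda$ lies in $\F_p$ rather than merely in $\Fpbar$. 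Your argument closes both of these points: letting $\sigma(y)=y+1$ generate $\Gal(\Fpbar(C)/\Fpbar(x))$, the additivity of $\wp(z)=z^p-z$ together with $\ker\wp=\F_p$ gives $\sigma(w)-w=\lambda\in\F_p$ for $w=\varphi(\tilde{y})$, whence $w-\lambda y$ is Galois-invariant and hence lies in $\Fpbar(x)$, and surjectivity rules out $\lambda=0$. What your approach buys is a complete derivation of the affine-linear form and of the constraint $\lambda\in\F_p^{\times}$ from the Artin-Schreier Galois structure, at the cost of invoking that structure explicitly; the paper's coefficient comparison is shorter but relies on an unjustified normal form for $\varphi(\tilde{y})$.
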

\begin{proof}
The isomorphism $\varphi$ is completely determined by its respective images $\tilde{x}$ and $\tilde{y}$ of $x$ and $y$. 
By assumption, the degree $p$ extension $\Fpbar(C)/\Fpbar(x)$ is fixed and $\tilde{x}$ is a function of $x$ not dependent on $y$. This implies that $\varphi$ must descend to an isomorphism of $\Fpbar(x)$.  Hence
\begin{equation*}
    \tilde{x}=\varphi(x)=\frac{\alpha x+\beta}{\gamma x+\delta},
\end{equation*}
where $\alpha,\beta,\gamma,\delta\in k $ and $\alpha\delta-\beta\gamma\in\Fpbar^{\times}$. 
To determine the image of $y$ under $\varphi$, 
we note that~$\varphi$ 
is invertible in $\Fpbar(x)(y)$, so it must be of the form 
\begin{equation*}
    \tilde{y}=\varphi(y)=\frac{A(x) y+B(x)}{C(x) y+D(x)},
\end{equation*}
where $A(x),B(x),C(x),D(x)\in \Fpbar(x)$.  By substituting $\tilde x=\frac{\alpha x+\beta}{\gamma x+\delta}$ into the equation of $\tilde{C}_{\tilde{f}}$ and comparing coefficients for $\tilde{y}$, we may assume that $A(x)=\lambda\in\Fpbar^\times$, $C(x)=0$, and $D(x)=1$.
\end{proof}

\begin{corollary}
    \label{cor:generalASisomorphism}
Let $C_f$ be an Artin-Schreier curve as given in~\eqref{eqn:ASgeneralform}.  Assume that $C_f$ is not birational to a curve of the form 
\begin{enumalphii} \itemsep 3pt
    \item $y^p-y=\frac{a}{x^p-x}$, with $a\in\Fpbar$; or \label{case:exceptionalCurve1}
    \item $y^p-y=\frac{1}{x^\lambda}$, with $\lambda \mid p+1$; or \label{case:exceptionalCurve2}
    \item $y^3-y=\frac{i}{x(x-1)}$, with $i^2=2$.\label{case:exceptionalCurve3}
\end{enumalphii}
Then all the isomorphisms of $C_f$ are given by isomorphisms as in~\eqref{eq:ASisom}.
\end{corollary}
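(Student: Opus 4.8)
The plan is to deduce the corollary from Lemma~\ref{lemma:isomorphisms} by showing that, outside the three exceptional families, every isomorphism automatically fixes the $p$-extension $\Fpbar(C)/\Fpbar(x)$. The whole statement then reduces to a purely group-theoretic question about $\Aut(C_f)$: whether the cyclic subgroup $G=\Gal(\Fpbar(C)/\Fpbar(x))\cong\Z/p\Z$ defining the Artin-Schreier cover $C_f\to C_f/G\cong\P^1$ is the \emph{unique} order-$p$ subgroup of $\Aut(C_f)$ whose quotient has genus $0$.

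First I would make this reduction precise. Let $\varphi\colon\Fpbar(\widetilde C)\to\Fpbar(C)$ be the function-field isomorphism attached to an isomorphism $C_f\to\widetilde C_{\tilde f}$, and set $\widetilde G=\Gal(\Fpbar(\widetilde C)/\Fpbar(\tilde x))$. Conjugation carries $\widetilde G$ to an order-$p$ subgroup $H\colonequals\varphi\widetilde G\varphi^{-1}\le\Aut(C_f)$ whose fixed field $\Fpbar(C)^H=\varphi(\Fpbar(\tilde x))$ is rational, hence of genus $0$. By the Galois correspondence, $\varphi$ fixes the $p$-extension---so that Lemma~\ref{lemma:isomorphisms} applies and yields the form~\eqref{eq:ASisom}---if and only if $H=G$. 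Conversely, any order-$p$ subgroup $H\neq G$ with genus-$0$ quotient gives a rational subfield $\Fpbar(C)^H\neq\Fpbar(x)$, i.e.\ a second Artin-Schreier model of the same curve, and the identity map relating the two models is an isomorphism \emph{not} of the form~\eqref{eq:ASisom}. Thus the corollary is equivalent to the assertion that the order-$p$ subgroup with genus-$0$ quotient is unique precisely when $C_f$ avoids families~\ref{case:exceptionalCurve1}--\ref{case:exceptionalCurve3}.

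Second, and this is the heart of the matter, I would classify the curves admitting two distinct such subgroups $G\neq H$ by studying $M\colonequals\langle G,H\rangle$ together with the ramification of $C_f\to\P^1$. The $p$-rank formula~\eqref{eq:ASprank} controls the number $r+1$ of branch points, while the genus formula~\eqref{eq:ASgenus} combined with the Riemann--Hurwitz and conductor data for \emph{both} $G$ and $H$ imposes rigid numerical constraints. I expect the analysis to split according to whether $G$ and $H$ share their fixed loci. When $M$ is elementary abelian of rank $2$, so that $C_f\to C_f/M\cong\P^1$ is a $(\Z/p\Z)^2$-cover, all ramification must be simple and one is forced into family~\ref{case:exceptionalCurve1}; here the second structure is visible on $(x^p-x)(y^p-y)=a$, where $x\mapsto x+1$ generates a genus-$0$ quotient distinct from $G=\langle y\mapsto y+1\rangle$ and the involution $(x,y)\mapsto(y,x)$ swaps them. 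When instead there is a single totally ramified point fixed by a large $p$-group (a ``big action''), a conductor computation forces the divisibility $\lambda\mid p+1$ and yields family~\ref{case:exceptionalCurve2}. A residual low-genus computation in characteristic $3$ then pins down the sporadic genus-$2$ curve of family~\ref{case:exceptionalCurve3}. In each exceptional family I would exhibit the extra subgroup $H$ explicitly to confirm non-uniqueness.

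Finally, once uniqueness of $G$ is established for all non-exceptional $C_f$, every isomorphism $\varphi$ satisfies $H=G$, hence fixes the $p$-extension, and Lemma~\ref{lemma:isomorphisms} delivers~\eqref{eq:ASisom} verbatim. The main obstacle is the middle step: ruling out a second genus-$0$ order-$p$ quotient in all but three cases requires the structure theory of wild automorphism groups of $\Z/p\Z$-covers of $\P^1$. The delicate points are controlling the possible $p$-Sylow subgroups of $\Aut(C_f)$, extracting the exact divisibility $\lambda\mid p+1$ in the big-action case, and excluding every remaining ``mixed'' ramification configuration apart from the single sporadic characteristic-$3$ example.
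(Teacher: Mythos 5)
Your reduction is exactly the one the paper uses: an isomorphism $\varphi\colon\Fpbar(\widetilde C)\to\Fpbar(C)$ conjugates $\Gal(\Fpbar(\widetilde C)/\Fpbar(\tilde x))$ to an order-$p$ subgroup $H\le\Aut(C_f)$ with rational quotient, and $\varphi$ fixes the $p$-extension (so that Lemma~\ref{lemma:isomorphisms} applies) precisely when $H$ coincides with $G=\Gal(\Fpbar(C)/\Fpbar(x))$. So the corollary does come down to the uniqueness of the index-$p$ rational subfield of $\Fpbar(C)$ outside the three listed families; note that only this direction is needed, so your ``if and only if'' formulation asks for slightly more than the statement requires.

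The divergence --- and the gap --- is in how that uniqueness is established. The paper does not prove it: it invokes \cite[Theorem~6]{ValentiniMadan}, which is exactly the classification of Artin--Schreier curves whose function field admits more than one rational subfield of index $p$, with families \ref{case:exceptionalCurve1}--\ref{case:exceptionalCurve3} as the answer. Your middle paragraph proposes to re-derive this classification from scratch via Riemann--Hurwitz, conductor data for $M=\langle G,H\rangle$, and the theory of big actions. As written this is a program rather than a proof: the elementary abelian case, the extraction of the divisibility $\lambda\mid p+1$, the identification of the sporadic genus-$2$ curve in characteristic $3$, and above all the exclusion of every remaining ``mixed'' ramification configuration each require substantial work --- you would essentially be reproving Valentini--Madan's theorem, and you flag these as the delicate points without supplying them. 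Either cite \cite[Theorem~6]{ValentiniMadan} at this juncture, which closes the gap and makes your argument coincide with the paper's two-line proof, or commit to the full case analysis; in the latter form the middle step is the entire content of the corollary and cannot be left as a sketch.
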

\begin{proof}
    By \cite[Theorem~6]{ValentiniMadan}, for $C_f$ not birational to a curve of the form~\eqref{case:exceptionalCurve1}-\eqref{case:exceptionalCurve3}, the cover of $\Fpbar(x)$ of degree $p$ is unique, so we can apply Lemma~\ref{lemma:isomorphisms} to derive the desired result.
\end{proof}

\begin{remark}\label{rem:genusexcep} Henceforth, we will refer to curves that are birational to a curve of the form~\eqref{case:exceptionalCurve1}-\eqref{case:exceptionalCurve3} in Corollary~\ref{cor:generalASisomorphism} as \defi{exceptional}. These curves have genus $(p-1)^2$, $\frac{(p-1)(\lambda-1)}{2}$, and~$2$, respectively.
\end{remark}

\begin{remark}
    The exceptional curve in case~\eqref{case:exceptionalCurve3} of Corollary~\ref{cor:generalASisomorphism} does not appear for the range of parameters considered in this article, so we do not examine it further.  In addition, it is hyperelliptic, and invariants for curves of this type are known (see \cite{LR12}).
    
\end{remark}

\begin{lemma}\label{lemma:explicitextraaut}
    Let $C_f$ be an exceptional Artin-Schreier curve as given in~\eqref{eqn:ASgeneralform}.  Then the automorphisms of $C_f$ are 
    those of type ~\eqref{eq:ASisom} preserving the curve model, together with the automorphisms
\begin{enumalphii}
    \item[(1)] $\tau: \begin{cases} x \mapsto y\\ y \mapsto x \end{cases}$
    \qquad \qquad \qquad \quad \ \ if $y^p-y=\frac{a}{x^p-x}$, with $a\in\Fpbar$; 
    \item[(2)]  
$\phi:\begin{cases} x \mapsto \epsilon xy^{-(p+1)/\lambda)} & \qquad \qquad \mbox{if $y^p-y={x^\lambda}$, with $\lambda \mid p+1$ and $\lambda<p+1$;} \\ y \mapsto 1/y & \qquad \qquad \mbox{where $\epsilon$ is a $\lambda$-root of $-1$;} \end{cases}$

    \item[(2')] $\phi$ in (2) and $\begin{cases}x \mapsto x+a^p\\ y \mapsto y+{a}{x}+b \end{cases}$ if  $y^p-y={x^{p+1}}$, where $a^{p^2}=-a$ and $b^p-b=-a^{p+1}$. 
\end{enumalphii}

\end{lemma}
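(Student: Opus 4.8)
The plan is to study $\Aut(C_f)$ through its action on the set $\Sigma$ of degree-$p$ rational subfields of the function field, namely those $L\subsetneq\Fpbar(C_f)$ with $L$ rational and $[\Fpbar(C_f):L]=p$ and $\Fpbar(C_f)/L$ Artin--Schreier; equivalently, the degree-$p$ subcovers $C_f\to\mathbb{P}^1$ exhibiting $C_f$ as an Artin--Schreier curve. The distinguished cover $\Fpbar(x)$ is one element of $\Sigma$, and an automorphism stabilizes $\Fpbar(x)$ exactly when it satisfies the hypothesis of Lemma~\ref{lemma:isomorphisms}. Hence the stabilizer of $\Fpbar(x)$ in $\Aut(C_f)$ is precisely the group $H$ of automorphisms of type~\eqref{eq:ASisom} preserving the model $C_f$. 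By orbit--stabilizer, any subgroup $N\le\Aut(C_f)$ with $H\le N$ that acts transitively on the $\Aut(C_f)$-orbit of $\Fpbar(x)$ must equal $\Aut(C_f)$: for $\sigma\in\Aut(C_f)$ pick $\nu\in N$ with $\nu(\Fpbar(x))=\sigma(\Fpbar(x))$, so that $\nu^{-1}\sigma\in H\le N$ and $\sigma\in N$. Thus the whole statement reduces to exhibiting explicit automorphisms which, together with $H$, move $\Fpbar(x)$ onto every other subcover in its orbit.

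The key external input is \cite[Theorem~6]{ValentiniMadan}, already invoked in Corollary~\ref{cor:generalASisomorphism}: the curve $C_f$ is exceptional precisely when $\lvert\Sigma\rvert>1$, and in each of the cases \eqref{case:exceptionalCurve1}--\eqref{case:exceptionalCurve3} that theorem enumerates the orbit of $\Fpbar(x)$ completely. With this enumeration in hand, the generation claim becomes the concrete task of producing automorphisms realizing the passage from $\Fpbar(x)$ to the remaining subcovers, and checking that they preserve the stated model.

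I would then handle the three families in turn. For case~\eqref{case:exceptionalCurve1}, rewriting $y^p-y=a/(x^p-x)$ as $(x^p-x)(y^p-y)=a$ exposes the symmetry, so $\tau\colon(x,y)\mapsto(y,x)$ is an automorphism carrying $\Fpbar(x)$ to the second cover $\Fpbar(y)$; as \cite{ValentiniMadan} identifies this orbit as $\{\Fpbar(x),\Fpbar(y)\}$, we get $\langle H,\tau\rangle=\Aut(C_f)$. For case~\eqref{case:exceptionalCurve2}, I pass to the polynomial model $y^p-y=x^\lambda$ via the type-\eqref{eq:ASisom} map $x\mapsto 1/x$, set $m=(p+1)/\lambda$, and verify by substitution that $\phi\colon(x,y)\mapsto(\epsilon xy^{-m},1/y)$ with $\epsilon^\lambda=-1$ is an automorphism; the crux is $(\epsilon xy^{-m})^\lambda=-x^\lambda y^{-(p+1)}=-(y^p-y)y^{-(p+1)}=y^{-p}-y^{-1}=(1/y)^p-(1/y)$. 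Since $\phi$ sends $\Fpbar(x)$ to a cover involving $y$, it lies outside $H$, and $\langle H,\phi\rangle$ sweeps out the orbit. Finally, for the Hermitian case $\lambda=p+1$ I record in addition the affine maps $x\mapsto x+a^p$, $y\mapsto y+ax+b$ with $a^{p^2}=-a$ and $b^p-b=-a^{p+1}$, checked by the identity $(x+a^p)^{p+1}=x^{p+1}+a^px^p-ax-a^{p+1}$; these are themselves of type~\eqref{eq:ASisom} and so already lie in $H$, but exhibiting them gives explicit generators of the unipotent part, which together with $\phi$ generate the full group.

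The main obstacle is the completeness step: ruling out automorphisms beyond $\langle H,\tau\rangle$, respectively $\langle H,\phi\rangle$. This rests entirely on the description of the orbit of $\Fpbar(x)$ from \cite{ValentiniMadan}, and the genuinely delicate instance is the Hermitian curve $y^p-y=x^{p+1}$, where that orbit is large and one must confirm that $\phi$ together with the unipotent translations act transitively on it, equivalently that the generated subgroup attains the correct order $\lvert\Aut(C_f)\rvert$. I would settle this either directly from the orbit count in \cite{ValentiniMadan} or, as an independent cross-check, by comparison with the known automorphism group of the Hermitian curve.
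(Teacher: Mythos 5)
Your proposal is correct in outline but takes a genuinely different route from the paper. The paper's proof is essentially a two-line citation: it observes that the type-\eqref{eq:ASisom} maps preserving the model are automorphisms, and then refers to \cite[Theorem~7]{ValentiniMadan}, \cite{Henn}, and \cite[Prop.~11.30, Ex.~A.9]{Torres} for the description of the full automorphism groups of the three exceptional families. You instead give the argument structure explicitly: identify the stabilizer of the subfield $\Fpbar(x)$ with the group $H$ of type-\eqref{eq:ASisom} automorphisms via Lemma~\ref{lemma:isomorphisms}, reduce the generation claim by orbit--stabilizer to transitivity of $\langle H,\tau\rangle$ (resp.\ $\langle H,\phi\rangle$) on the rational degree-$p$ subcovers, and verify the explicit maps by direct substitution (your computations for $\tau$, $\phi$, and the unipotent maps in case (2') are all correct, and your observation that the latter already lie in $H$ is accurate). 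What your approach buys is a self-contained reduction that makes clear exactly where external input is needed; what it costs is that the completeness step is only sketched, and it is precisely there that the real content lives. Two cautions: first, \cite[Theorem~6]{ValentiniMadan} establishes that the degree-$p$ rational subcover fails to be unique only for the listed curves, but the actual enumeration of all such subcovers (equivalently, of the order-$p$ subgroups with rational quotient, as worked out in Lemmas~\ref{lem:standard1} and~\ref{lem:standard2}) comes from Theorem~7 and the known automorphism groups, so your citation should be adjusted; second, for transitivity you implicitly need that $H$ is maximal in $\Aut(C_f)$ (e.g.\ via primitivity of the coset action, which holds in all three cases but deserves a sentence), or else an order count against the known group orders recorded in the remark following the lemma. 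With those points addressed, your argument is a valid and more transparent substitute for the paper's citation-based proof.
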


\begin{proof}
    It is straightforward to see that the isomorphisms from Corollary~\ref{cor:generalASisomorphism} are also isomorphisms for exceptional curves.  The extra function field automorphisms are described in \cite[Theorem~7]{ValentiniMadan} and \cite{Henn} or in \cite[Prop.\ 11.30, Ex.\ A.9]{Torres}. 
\end{proof}

\begin{remark} The automorphism groups of the different cases in Corollary~\ref{cor:generalASisomorphism} and Lemma~\ref{lemma:explicitextraaut} are a semi-direct product of an abelian $p$-group of order $p^2$ with a dihedral group of order $2(p-1)$, so with  cardinality $2p^2(p-1)$, in case (1), an extension of a cyclic group of order $\lambda$ in $PGL(2,p)$ with $\lambda<p+1$, so of cardinality $\lambda|PGL(2,p)|=\lambda (p+1)p(p-1)$, in case (2), and $PGU(3,p^2)$ in case $(2')$ where $|PGU(3,p^2)|=(p^3+1)p^3(p^2-1)$. Finally, in case $(3)$, the automorphism group is a extension of a cyclic group of order $2$ by $S_4$. 
\end{remark}

\subsection{Fractional linear transformations acting on \texorpdfstring{$\Fpbar(x)$}{FFp(x)}}\label{SS:fractionalLinearTransf}
Let \[\varphi(x,y)=\left ( \frac{\alpha x + \beta}{\gamma x + \delta}, \lambda y + h(x) \right ),\] with $\alpha, \beta, \gamma, \delta, \lambda, h(x)$ as given in Corollary~\ref{cor:generalASisomorphism}. For ease of exposition, define 
\[ M\colonequals\begin{pmatrix} \alpha & \beta \\ \gamma & \delta \end{pmatrix}\in \GL_2(\Fpbar) \mbox{ \ and \ } M(x)\colonequals\frac{\alpha x +\beta}{\gamma x + \delta}. \]
Let $f(x)\in\Fpbar(x)$. The map $x\mapsto M(x)$ extends to an action of $\GL_2(\Fpbar)$ on $\Fpbar(x)$ by the extension $M(f(x))=f(M(x))$, which induces an action on the projective line $\P^1(\Fpbar)$ via the poles of functions in $\Fpbar(x)$. When we consider functions with a single pole, this action is the inverse of the well-known action of M\"{o}bius transformations on $\P^1(\Fpbar)$.  For example, consider $f(x)=\frac{1}{x}$, a function with a single pole at $x=0$.  Then $f(M(x))=\frac{\gamma x+\delta}{\alpha x+\beta}$, which has a pole at $x=\frac{-\beta}{\alpha}$ when $\alpha\neq 0$, and a pole at $\infty$ otherwise.  The standard M\"{o}bius transformation of $\P^1(\Fpbar)$ associated to $M$ satifies $M(0)=\frac{\beta}{\delta}$ if $\delta\neq 0$, with $M(0)=\infty$ when $\delta=0$.  However, $M^{-1}(0)=\frac{-\beta}{\alpha}$ when $\alpha\neq 0$, $\infty$ when $\alpha=0$.  

To avoid confusion with these actions, we set the following notation to view poles of functions as points in $\P^1(\Fpbar)$ for the remainder of the paper. Let $P_{\infty}\in\P^1(\Fpbar)$ be the pole of the function $x\in \Fpbar(x)$. For $\mu \in\Fpbar$, let $P_{\mu}\in\P^1(\Fpbar)$ be the pole of the function $\frac{1}{x-\mu}\in \Fpbar(x)$. Throughout, we will describe the action of $\GL_2(\Fpbar)$ on poles using this notation.
For later convenience, we describe the action of $M$ on poles explicitly.

\begin{lemma}\label{lem:pole_images}
The transformation $M(x)=\frac{\alpha x+\beta}{\gamma x + \delta}$ sends the pole $P_{\mu}$ to $P_{\mu^{\prime}}$, where $\mu^{\prime} = M^{-1}(\mu) = \frac{\delta\mu-\beta}{-\gamma\mu+\alpha}$.
\end{lemma}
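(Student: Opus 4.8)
The plan is to verify the claim by a direct computation of the effect of the substitution $x \mapsto M(x)$ on a test function whose unique pole is $P_\mu$, and then to read off the location of the pole of the image. Recall from the notation set just before Lemma~\ref{lem:pole_images} that for finite $\mu\in\Fpbar$ the point $P_\mu$ is the pole of $\frac{1}{x-\mu}$, while $P_\infty$ is the pole of $x$. Since the action of $M$ on $\Fpbar(x)$ is given by $M(f)(x)=f(M(x))$ and carries a function with a single simple pole to another function with a single simple pole, it suffices to track where that single pole goes. So the whole statement reduces to computing the pole of $M\!\left(\frac{1}{x-\mu}\right)$ (and of $M(x)$ for the case $\mu=\infty$).

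First I would treat the generic case of finite $\mu$ with $\gamma\mu\neq\alpha$. Substituting $M(x)=\frac{\alpha x+\beta}{\gamma x+\delta}$ into $\frac{1}{x-\mu}$ and clearing the inner denominator gives
\[
\frac{1}{M(x)-\mu}=\frac{\gamma x+\delta}{(\alpha-\gamma\mu)x+(\beta-\delta\mu)},
\]
whose only pole is the zero of the denominator, namely $x=\frac{\delta\mu-\beta}{-\gamma\mu+\alpha}$. This is exactly $P_{\mu'}$ with $\mu'=\frac{\delta\mu-\beta}{-\gamma\mu+\alpha}$. To conclude I would note that this $\mu'$ equals $M^{-1}(\mu)$: the inverse of $M=\begin{pmatrix}\alpha&\beta\\\gamma&\delta\end{pmatrix}$ is represented, up to the scalar $\det M\in\Fpbar^\times$ (which is irrelevant for the induced fractional linear map), by the adjugate $\begin{pmatrix}\delta&-\beta\\-\gamma&\alpha\end{pmatrix}$, so $M^{-1}(\mu)=\frac{\delta\mu-\beta}{-\gamma\mu+\alpha}$, matching the displayed expression.

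Then I would dispatch the boundary cases so that the formula holds on all of $\P^1(\Fpbar)$. For $\mu=\infty$ one applies $M$ to the function $x$ directly: $M(x)=\frac{\alpha x+\beta}{\gamma x+\delta}$ has its pole at $x=-\delta/\gamma$ when $\gamma\neq0$ and at $P_\infty$ when $\gamma=0$, which agrees with the limiting value $M^{-1}(\infty)=\frac{\delta}{-\gamma}$. Symmetrically, when $\alpha-\gamma\mu=0$ the denominator above is constant and the image function has no finite pole, so the pole lands at $P_\infty$, consistent with $M^{-1}(\alpha/\gamma)=\infty$. The only real subtlety — and the point I would be most careful about — is that the inverse $M^{-1}$ appears here rather than $M$ itself, exactly the phenomenon flagged in the discussion preceding the lemma (the action on poles being inverse to the standard Möbius action on $\P^1$). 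I would therefore present the explicit substitution computation above as the decisive check, rather than appealing to the projective action abstractly, since the computation makes the inversion manifest and simultaneously handles the infinity bookkeeping with minimal fuss.
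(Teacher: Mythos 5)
Your proof is correct and follows essentially the same route the paper intends: the paper states this lemma without a formal proof, but the paragraph immediately preceding it carries out exactly your substitution computation for the special case $f(x)=\frac{1}{x}$ (i.e.\ $\mu=0$), and your argument is the straightforward generalization to arbitrary $\mu$ together with the $P_\infty$ bookkeeping. The only point you might add for completeness is that the numerator $\gamma x+\delta$ and denominator $(\alpha-\gamma\mu)x+(\beta-\delta\mu)$ cannot share a root, since their resultant is $\gamma\beta-\alpha\delta\neq 0$, so the zero of the denominator really is a pole.
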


\section{Standard form}\label{sec:StandardForm}

Every Artin-Schreier curve $C_f$ as given in~\eqref{eqn:ASgeneralform} can be transformed into an isomorphic curve in \defi{standard form}. While this form is not unique, there are a finite number of standard forms for a given isomorphism class, so this form facilitates our invariant computations. Informally, the idea is to find an isomorphism that sends the poles of $C_f$ with the three highest (not necessarily distinct) orders to the pole $P_\infty$ at infinity and the poles $P_0$ and $P_1$ at $x = 0$ and $x = 1$, respectively. In case $C_f$ has at most two poles, they are sent to $P_\infty$ and $P_0$ (for two poles) or simply to $P_\infty$ (for only one pole), and certain normalizations are applied to the polynomial part on the right hand side of~\eqref{eqn:ASgeneralform}. 

The single pole case is the most complicated. Here, the unique pole is moved to infinity, so the right hand side of $C_f$ becomes a polynomial $f(x) \in \Fpbar[x]$ of degree $d$, the order of the pole. We address this case in the following lemma.

\begin{lemma} \label{a1=0}
Let $C_f$ be an Artin-Schreier $\Fpbar$-curve 
as given in~\eqref{eqn:ASgeneralform} with a unique pole at infinity, where $f(x) \in \Fpbar[x]$ 
is monic of degree $d > 1$. Then $C_f$ is isomorphic to a curve of the form $C_g : y^p - y = g(x)$ with
\begin{equation*}
g(x) = x^d + \sum_{i=0}^{d-1} b_i x^i \in \Fpbar[x] ,
\end{equation*}
where $b_i = 0$ whenever $p$ divides $i$ and $b_1 = 0$.
\end{lemma}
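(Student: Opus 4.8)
The plan is to construct an explicit isomorphism of the allowed type and track its effect on the right-hand side $f$. Since $C_f$ has its only pole at infinity and we want the new model to remain a \emph{polynomial} equation, I would restrict attention to isomorphisms \eqref{eq:ASisom} whose $x$-component fixes $P_\infty$; these are exactly the affine substitutions $x\mapsto ux+v$ (the case $\gamma=0$), together with a $y$-substitution $y\mapsto\lambda y+h(x)$ with $\lambda\in\F_p^\times$ and $h\in\Fpbar[x]$. Substituting into $y^p-y=g(x)$ and using $\lambda^p=\lambda$ gives $\lambda(y^p-y)+(h^p-h)=g(ux+v)$, so on $C_f$ one obtains the relation $g(ux+v)=\lambda f(x)+\wp\big(h(x)\big)$, where $\wp(h)\colonequals h^p-h$. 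Hence two polynomial right-hand sides define isomorphic curves (with pole fixed at infinity) whenever they differ by an affine substitution, a scaling by $\lambda\in\F_p^\times$, and addition of a term $\wp(h)$. It therefore suffices to reach the target form using these operations, and in fact only the translation $v$ and the $\wp$-term $h$ will be needed.

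First I would remove the forbidden monomials using $\wp$. Working in the quotient $V\colonequals\Fpbar[x]/\wp(\Fpbar[x])$, the identities $\wp(x^m)=x^{pm}-x^m$ show that $[x^{pm}]=[x^m]$ and $[1]=0$, so the classes $[x^i]$ with $i\ge 1$ and $p\nmid i$ form an $\Fpbar$-basis of $V$; here I use that $\Fpbar$ is perfect (so the needed $p$-th roots $a^{1/p}$ exist) and algebraically closed (so $c\mapsto c^p-c$ is surjective, which clears the constant term). Reducing $f$ modulo $\wp(\Fpbar[x])$ — concretely, subtracting $\wp\big(a^{1/p}x^{i/p}\big)$ to clear each surviving term of degree $i$ with $p\mid i$, processed from the top down so the procedure terminates — produces an isomorphic model with right-hand side $f_1=x^d+\sum_{1\le i\le d-1,\,p\nmid i} b_i x^i$. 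The leading term is untouched because the pole order $d$ is prime to $p$ by \cite[Lemma 3.7.7(b)]{stichtenoth2009algebraic}, so $f_1$ is still monic of degree $d$ and the constant term is gone.

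It remains to kill the linear coefficient $b_1$, and here I would apply a translation $x\mapsto x-v$ followed by one more $\wp$-reduction. The coefficient of $x$ in the reduced form of $f_1(x-v)$ is the $[x^1]$-component of $[f_1(x-v)]\in V$; since $[x^N]=[x^1]$ exactly when $N$ is a power of $p$, this component equals $P(v)\colonequals\sum_{a\ge 0}\big(\text{coefficient of }x^{p^a}\text{ in }f_1(x-v)\big)$, a polynomial in $v$. The key observation is that the unique top-degree-in-$v$ contribution comes from the $x^1$-coefficient of $(x-v)^d$, namely $\binom{d}{1}(-v)^{d-1}=(-1)^{d-1}d\,v^{d-1}$, while every other contribution — the $x^1$-coefficient of a lower term $b_i x^i$, or a coefficient folded in from degree $p^a$ with $a\ge 1$ — has $v$-degree strictly less than $d-1$. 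Because $p\nmid d$ the leading coefficient $(-1)^{d-1}d$ is nonzero and, as $d>1$, the polynomial $P(v)$ is nonconstant; since $\Fpbar$ is algebraically closed it has a root $v_0$. Translating by $v_0$ and reducing modulo $\wp$ once more yields $g$ with $b_1=P(v_0)=0$, the $p\mid i$ coefficients again zero, and leading term still $x^d$, as required.

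The main obstacle is this last step: correctly accounting for the interaction between the translation and the $\wp$-reduction. One must notice that $\wp$-reduction folds the coefficients at degrees $p,p^2,\dots$ back onto the linear coefficient, so that the resulting $b_1$ is not the naive value $f_1'(v)$ but the sum $P(v)$; the content of the argument is precisely that these extra folded-in terms have strictly smaller $v$-degree and therefore cannot cancel the surviving leading term $(-1)^{d-1}d\,v^{d-1}$, which is nonzero thanks to $p\nmid d$. The remaining ingredients — the descent of the three operations to $V$ and the termination of the top-down clearing — are routine.
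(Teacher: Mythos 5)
Your overall strategy is the same as the paper's: an affine translation of $x$ combined with an Artin-Schreier substitution $y \mapsto y + h(x)$, with the crux being that the contributions folded onto the linear coefficient from degrees $p, p^2, \dots$ have strictly smaller degree in the translation parameter than the $f'$ term, whose degree $d-1$ and nonzero leading coefficient $(-1)^{d-1}d$ (since $p \nmid d$) guarantee a root over $\Fpbar$. However, there is a genuine error in your formula for the folded linear coefficient. The subgroup $\wp(\Fpbar[x])$ is \emph{not} an $\Fpbar$-subspace (for instance, $c(x^p-x) \in \wp(\Fpbar[x])$ forces $c \in \F_p$), so $V$ is only an $\F_p$-vector space; the monomials $x^i$ with $i \ge 1$, $p \nmid i$ give a system of coset representatives, not an $\Fpbar$-basis, and the reduction map on coefficients is Frobenius-semilinear rather than linear. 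Concretely, clearing a term $c\,x^{pm}$ means subtracting $\wp(c^{1/p}x^m)$, which deposits $c^{1/p}x^m$ --- not $c\,x^m$ --- at degree $m$, exactly as in your own description of the first reduction stage. Iterating, the linear coefficient of the reduced form of $f_1(x-v)$ is not $P(v)=\sum_{a\ge 0} c_{p^a}(v)$ but rather $Q(v)=\sum_{a\ge 0} c_{p^a}(v)^{1/p^a}$, where $c_N(v)$ denotes the coefficient of $x^N$ in $f_1(x-v)$. A root of $P$ therefore does not make the linear term of the reduced model vanish, so the proof as written does not reach the stated conclusion.

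The gap is repairable without changing the structure of your argument. The function $Q(v)$ is a polynomial in $v^{1/p^A}$ once $p^A \ge d$, and after substituting $v = \tau^{p^A}$ it becomes a genuine polynomial in $\tau$ whose top-degree term $(-1)^{d-1} d\,\tau^{(d-1)p^A}$ still comes solely from $c_1(v)$, because each $c_{p^a}(v)^{1/p^a}$ with $a \ge 1$ contributes degree at most $(d-p^a)p^{A-a} < (d-1)p^A$. Algebraic closedness then supplies a root $\tau_0$, and $v_0=\tau_0^{p^A}$ does the job. This corrected quantity is precisely the expression $u_1(\beta)+b_1(\beta)$ whose vanishing the paper arranges, the recursively defined $b_i(\beta)$ there carrying exactly these iterated $p$-th roots.
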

\begin{proof}
We explicitly determine $\beta \in \Fpbar$ and $h(x) \in \Fpbar[x]$ such that the isomorphism $(x,y) \mapsto (x+\beta, y+h(x))$ transforms $C_f$ into the required form. 

Write 
\[ f(x) = a_dx^d+\sum_{i=0}^{d-1} a_i x^i \]
with $a_i \in\Fpbar$ for $0 \le i \le d-1$ and $a_d=1$.  For any $\beta \in \Fpbar$ we define
\[u_i(\beta) \colonequals \sum_{j=i}^d a_j \binom{j}{i} \beta^{j-i} \ (0 \le i \le d),\]
so $f(x+\beta) = \sum_{i=0}^d u_i(\beta) x^i$ and $u_i(0) = a_i$ for $0 \le i \le d$.
Set $M = \lfloor d/p \rfloor$ and $m = \lfloor M/p \rfloor$, where $\lfloor \cdot \rfloor$ denotes the floor function. Then $Mp < d$, as $p$ does not divide $d$. 
If $M > 0$, recursively define elements $b_i(\beta) \in \Fpbar$ via
\[ b_i(\beta)^p = \begin{cases} 
		u_{ip}(\beta) & \mbox{for $M \ge i \ge m+1$}, \\ 
		u_{ip}(\beta) + b_{ip}(\beta) & \mbox{for $m \ge i \ge 1$} . \end{cases} \]
Since $i < ip \le mp \le M$ for $1 \le i \le m$, the quantities $b_i(\beta)$ are well defined. 
Also define $b_0(\beta) \in \Fpbar$ via the identity 
\[ b_0(\beta)^p - b_0(\beta) = u_0(\beta) . \]
Let $f'(x)$ denote the formal derivative of $f(x)$ with respect to $x$. Since $p$ does not divide $d$, the degree of $f'(x)$ is $d-1 > 0$, and we note that $u_1(\beta) = f'(\beta)$. 

Now choose $\beta \in \Fpbar$ such that $u_1(\beta) + b_1(\beta) = 0$. For this choice of $\beta$, define 
\[ h(x) = \sum_{i=0}^M b_i(\beta) x^i . \]
Then the isomorphism $(x,y) \mapsto (x + \beta, y + h(x))$ maps $C_f$ to a curve of the form $C_g : y^p - y = g(x)$ where
\[ g(x) = f(x+\beta) - h(x)^p + h(x) \in \Fpbar[x] . \]
Since $h(x)^p$ has degree $Mp < d$ and $f(x+\beta)$ is monic of degree $d$, $g(x)$ is also monic of degree $d$. It remains to show that the coefficients of $g(x)$ satisfy the desired conditions.

Every monomial of the form $x^{ip}$ potentially appearing in $g(x)$ satisfies $0 \le i \le M$. For $i \ge 1$, the corresponding coefficient of $g(x)$ is
\[ b_{ip}(\beta) = \begin{cases} u_{ip}(\beta) - b_i(\beta)^p & \mbox{for $M \ge i \ge m+1$}, \\
		u_{ip}(\beta) - b_i(\beta)^p + b_{ip}(\beta) & \mbox{for $m \ge i \ge 1$}. \end{cases} \]
Moreover, $b_0(\beta) = u_0(\beta) - b_0(\beta)^p + \beta_0(\beta)$. By the definition of $b_i(\beta)$, we have $b_{ip} = 0$ for $0 \le i \le M$. Finally, the coefficient of $x$ in $g(x)$ is $b_1(\beta) = u_1(\beta) + b_1(\beta)$, which also vanishes by our choice of $\beta$.
\end{proof}

\begin{remark}
    
The normalization conditions proposed in \cite[Prop.\ 2.1.1]{farnell} require the removal of the monomial $x^{d-1}$ as well as all $p$-th power monomials in $f(x)$ via an iterative sequence of suitable isomorphisms. One difficulty arising with these restrictions is that in the case where $d \equiv 1 \pmod{p}$, removal of $p$-th powers already eliminates the monomial $x^{d-1}$, in which case it is desirable to remove a second coefficient in order to further simplify the curve model. It is in fact always possible to eliminate at least one of the next two highest order monomials $x^{d-2}$, $x^{d-3}$, but the form of the initial curve determines which of them can be removed. Specifically, all linear transformations on $x$  leave the coefficient of $x^{d-2}$ in $f(x)$ fixed, so this monomial can be eliminated precisely when the corresponding term in the initial curve vanishes. In order to avoid this dependence on the shape of the curve, we opt instead to remove the linear term and all $p$-th power monomials from $f(x)$ at once, with a single isomorphism. This choice of normalization has the advantage of being more canonical and thus mathematically more satisfying. However, in practice, it may come at the expense of complicating the calculation of the curve invariants compared to a model with fewer high degree monomials; the example in Section~\ref{SS:one-pole-order-5} illustrates this. 
\end{remark}

We now have all the ingredients to convert an Artin-Schreier curve to standard form.

\begin{theorem} \label{T:normal}
Let $p$ be an odd prime and $C_f$ an Artin-Schreier $\Fpbar$-curve as given in~\eqref{eqn:ASgeneralform} with $r+1$ poles of respective orders $d_1 \ge d_2 \ge \ldots \ge d_{r+1}$. Then $C_f$ is isomorphic to an Artin-Schreier curve 
\[ C_g: y^p - y = g(x), \] 
where $g(x) \in \Fpbar(x)$ takes on one of the following forms:

\begin{enumerate}
\item \label{case:r=0}\textbf{Case $r = 0$}: 
\[ g(x) = x^{d_1} + Q(x) \]
where 
$Q(x) \in \Fpbar[x]$ is a multiple of $x^2$ and no monomial appearing in $Q(x)$ has an exponent that is divisible by $p$. 

\item \label{case:r=1}\textbf{Case $r = 1$}: 
\[ g(x) = F(x) + G \left ( \frac{1}{x} \right ), \]
where $F(x), G(x) \in \Fpbar[x]$, $F(x)$ is monic, $\deg(F) = d_1$, $\deg(G) = d_2$, and no monomial appearing in $F(x)$ or $G(x)$ has an exponent 
that is divisible by $p$.

\item \label{case:r>=2}\textbf{Case $r \ge 2$}: 
\[ g(x) = F(x) + G \left (\frac{1}{x} \right ) + H \left ( \frac{1}{x-1} \right ) + S(x), \]
where $F(x), G(x), H(x) \in \Fpbar[x]$, $\deg(F) = d_1$, $\deg(G) = d_2$, $\deg(H) = d_3$, either $S(x) = 0$ or 
\[ S(x) = \sum_{i=4}^{r+1} \frac{g_i(x-\lambda_i)}{(x-\lambda_i)^{d_i}}, \] 
with $\lambda_i \in \Fpbar \setminus \{ 0, 1 \}$, $g_i(x) \in \Fpbar[x]$ non-zero, $\deg(g_i) < d_i$, and no monomial appearing in $F(x)$, $G(x)$, $H(x)$, or any of the polynomials $x^{d_i} g_i(x^{-1})$ has an exponent that is divisible by~$p$, for $4 \le i \le r+1$.
\end{enumerate}
The curve $C_g$ is said to be in a \defi{standard form}.
\end{theorem}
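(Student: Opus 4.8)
The plan is to realize the claimed standard form entirely through the isomorphisms of Corollary~\ref{cor:generalASisomorphism}, i.e. the maps $(x,y) \mapsto (M(x), \lambda y + h(x))$ with $M \in \GL_2(\Fpbar)$, $\lambda \in \F_p^\times$, and $h(x) \in \Fpbar(x)$, exploiting the two independent kinds of freedom they provide. The Möbius part $M$ acts on the poles of $f$ (by Lemma~\ref{lem:pole_images}), while the additive part contributes the key identity: since $\lambda^p = \lambda$ for $\lambda \in \F_p^\times$, substituting $y \mapsto \lambda y + h(x)$ turns $y^p - y = f$ into $y^p - y = \lambda^{-1}(f - (h^p - h))$. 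Thus I may precompose $f$ with any Möbius transformation, rescale by $\lambda^{-1}$, and add any difference $-(h^p - h)$, and the plan is to use these three operations in that order.

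First I would position the poles. Because $\GL_2(\Fpbar)$ acts sharply $3$-transitively on $\P^1(\Fpbar)$, I can choose $M$ sending the unique pole to $P_\infty$ when $r = 0$; the two poles of orders $d_1 \ge d_2$ to $P_\infty, P_0$ when $r = 1$; and the three highest-order poles to $P_\infty, P_0, P_1$ (in order of decreasing order $d_1 \ge d_2 \ge d_3$) when $r \ge 2$, the remaining poles landing at points $P_{\lambda_i}$ with $\lambda_i \ne 0,1$. I then expand the transformed $f$ in partial fractions, collecting its principal part at $P_\infty$ into a polynomial $F(x)$ of degree $d_1$, at $P_0$ into $G(1/x)$ of degree $d_2$, at $P_1$ into $H(1/(x-1))$ of degree $d_3$, the principal parts at the remaining poles into $S(x)$, plus a single constant.

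Next I would eliminate the forbidden monomials using $h$. Writing $h$ itself in partial-fraction form, the operation $f \mapsto f - (h^p - h)$ decouples pole by pole: a term $b/(x-\lambda)^m$ in $h$ contributes $-b^p/(x-\lambda)^{mp} + b/(x-\lambda)^m$, affecting only the pole at $\lambda$, and a term $b_i x^i$ affects only $P_\infty$. Hence at each pole I can run exactly the recursive elimination already carried out in Lemma~\ref{a1=0}: choosing the coefficients of $h$ as appropriate $p$-th roots kills every monomial whose exponent is divisible by $p$, the cascade produced by the $+h$ term terminating because each step strictly lowers the order. The lone constant is removed by solving the Artin-Schreier equation $b_0^p - b_0 = c_0$, which is solvable over the algebraically closed field $\Fpbar$. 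Crucially, since no $d_i$ is divisible by $p$, the top-order term at each pole is never touched, which preserves the degree equalities $\deg F = d_1$, $\deg G = d_2$, $\deg H = d_3$ and the pole orders $d_i$; the single-pole case is then literally Lemma~\ref{a1=0}, whose extra translation freedom $x \mapsto x + \beta$ (available precisely because there is only one pole) clears the linear coefficient and makes $Q(x)$ a multiple of $x^2$.

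Finally I would handle the monic normalizations, which are exactly matched to the residual scaling freedom. For $r = 0$, the affine stabilizer of $P_\infty$ together with $\lambda$ rescales the leading coefficient to $1$ via a $d_1$-th root in $\Fpbar$; for $r = 1$, the stabilizer of $\{P_\infty, P_0\}$ still contains all dilations $x \mapsto cx$, which fix both poles and rescale $F$ to be monic without disturbing the two-pole structure; for $r \ge 2$, the three fixed points $P_\infty, P_0, P_1$ leave no scaling, consistent with Case~\eqref{case:r>=2} imposing no monic condition. I expect the main obstacle to be organizational rather than conceptual: verifying that the simultaneous recursive elimination at all poles is well defined and non-interfering, and confirming in each case that the subgroup of isomorphisms fixing the chosen pole positions is exactly large enough to realize the stated monic normalization but no larger, so that the normalization conditions correspond precisely to the stabilizer of the positioned poles.
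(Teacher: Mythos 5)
Your proposal is correct and follows essentially the same route as the paper's proof: position the highest-order poles at $P_\infty$, $P_0$, $P_1$ via a M\"obius transformation, rescale to achieve monicity, eliminate $p$-th power monomials pole by pole through translations $y \mapsto y + h(x)$, remove the constant via an Artin--Schreier equation, and invoke Lemma~\ref{a1=0} in the single-pole case to additionally clear the linear term. The only differences are presentational (you package the eliminations into one partial-fraction-decoupled $h$ and appeal to sharp $3$-transitivity, where the paper iterates monomial by monomial and writes explicit matrices), not mathematical.
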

\begin{proof}
We represent fractional linear transformations $M(x) = \displaystyle \frac{\alpha x + \beta}{\gamma x + \delta}$ by $\begin{pmatrix} \alpha & \beta \\ \gamma & \delta \end{pmatrix}$ as in Section~\ref{SS:fractionalLinearTransf}.

We begin with the case $r = 0$, so $C_f$ has a unique pole $P_{\mu_1}$ of order $d_1$. If $\mu_1 \ne \infty$, then $\mu_1 \in \Fpbar$ and $f(x)$ is of the form
\begin{equation*} 
f(x) = \frac{f_\infty(x-\mu_1)}{(x-\mu_1)^{d_1}} ,
\end{equation*}
where $f_\infty(x) \in \Fpbar[x]$ is non-zero of degree less than $d_1$. The matrices 
\begin{alignat*}{2}
& \begin{pmatrix} 1 & 0 \\ 0 & 1 \end{pmatrix} && \quad \mbox{if $\mu_1 = \infty$}, \\ 
& \begin{pmatrix} \mu_1 & -1 \\ 1 & 0 \end{pmatrix} && \quad \mbox{if $\mu_1 \in \Fpbar$},
\end{alignat*}
send $P_{\mu_1}$ to $P_\infty$, so we produce a curve of the form $y^p - y = \tilde{f}(x)$, where $\tilde{f}(x) \in \Fpbar[x]$ is a polynomial of degree $d_1$. 
Let $a$ be the leading coefficient of $\tilde{f}$.
Applying a matrix of the form $\begin{pmatrix} a^{-1/p} & 0 \\ 0 & 1 \end{pmatrix}$ yields the curve $y^p - y = \tilde{g}(x)$ where $\tilde{g}(x) = \tilde{f}(a^{-1/d}x)$ 
is monic of degree $d_1$. 

If $d_1 = 1$, then define $\gamma \in \Fpbar$ via $\gamma^p - \gamma = \tilde{g}(0)$, the constant coefficient of $\tilde{g}(x)$. Then the isomorphism $(x,y) \mapsto (x, y+\gamma)$ produces the curve $y^p - y = x = x^{d_1}$. If $d_1 > 1$, then the isomorphism of Lemma~\ref{a1=0} produces a curve $C_g$ of the required form. 

For the remaining cases (i.e.\ $r \ge 1$), the conversion of $C_f$ to standard form is more easily done in two stages. The first stage applies a suitable fractional linear transformation of $x$ to $C_f$ that moves the highest order poles as explained earlier. This is followed by a second sequence of linear transformations applied to $y$ that eliminates $p$-th powers of $x$ one-by-one. 

Assume first that $r = 1$, so $C_f$ contains two poles $P_{\mu_1}, P_{\mu_2}$ of respective orders $d_1 \ge d_2$. Then the matrices
\begin{align*}
\begin{pmatrix} 1 & \mu_2 \\ 0 & 1 \end{pmatrix}     & \quad \mbox{if $\mu_1 = \infty$ and $\mu_2 \in \Fpbar$}, \\
\begin{pmatrix} \mu_1 & 1 \\ 1 & 0  \end{pmatrix}    & \quad \mbox{if $\mu_1 \in \Fpbar$ and $\mu_2 = \infty$}, \\
\begin{pmatrix} \mu_1 & \mu_2 \\ 1 & 1 \end{pmatrix} & \quad \mbox{if $\mu_1 \in \Fpbar$ and $\mu_2 \in \Fpbar$},
\end{align*}
send $P_1$ and $P_2$ to $P_\infty$ and $P_0$, respectively, and thus produce curve of the form $y_p - y = \tilde{f}(x)$, where $\tilde{f}(x) \in \Fpbar[x, x^{-1}]$ is a Laurent polynomial of degree $d_1$ in $x$ and degree $d_2$ in $x^{-1}$. Let~$a$ be the coefficient of $x^{d_1}$ in $\tilde{f}(x)$. As in the case $r=0$, a scaling matrix of the form
\[ \begin{pmatrix} a^{-1/p} & 0 \\ 0 & 1 \end{pmatrix}  \]
yields an isomorphic curve $C_{\tilde{g}}$ where $\tilde{g}(x)$ is monic with respect to $x$. 

The case $r = 2$ proceeds similarly. Suppose $C_f$ has three or more poles. Let $P_{\mu_1}, P_{\mu_2}$, $P_{\mu_3}$ be poles of the three largest resspective orders $d_1 \ge d_2 \ge d_3$. Then the matrices
\begin{alignat*}{2}
&\begin{pmatrix} \mu_3 - \mu_2 & \mu_2 \\ 0 & 1 \end{pmatrix} 
		&& \quad \mbox{if $\mu_1 = \infty$, $\mu_2 \in \Fpbar$ and $\mu_3 \in \Fpbar$}, \\
&\begin{pmatrix} \mu_1 & \mu_1-\mu_3 \\ 1 & 0  \end{pmatrix} 
		&& \quad \mbox{if $\mu_1 \in \Fpbar$, $\mu_2 = \infty$ and $\mu_3 \in \Fpbar$}, \\
&\begin{pmatrix} -\mu_1 & \mu_2 \\ -1 & 1 \end{pmatrix} 
		&& \quad \mbox{if $\mu_1 \in \Fpbar$, $\mu_2 \in \Fpbar$ and $\mu_3 = \infty$}, \\
&\begin{pmatrix} \mu_1(\mu_3-\mu_2) & \mu_2(\mu_1-\mu_3) \\ \mu_3-\mu_2 & \mu_1-\mu_3 \end{pmatrix} 
		&& \quad \mbox{if $\mu_i \in \Fpbar$ for $i = 1, 2, 3$},
\end{alignat*}
send $P_1, P_2$, and $ P_3$ to $P_\infty$, $P_0$, and $P_1$, respectively. The same scaling matrix as in the case $r = 1$ produces a rational function $\tilde{g}(x)$ of the specified form that is monic in $x$.

Next, in both of the cases $r=1$ and $r\ge 2$, we remove all the monomials of the form $x^{kp}$ for $k\in\Z\setminus0$. 
The constant coefficient of $\tilde{g}(x)$ is handled separately at the end. 

As described in \cite[Prop.\ 2.1.1]{farnell} and the subsequent discussion, removal of the monomials $x^{kp}$ with $k > 0$ from $\tilde{g}(x)$ is accomplished iteratively via suitable translations of $y$, leaving~$x$ fixed. Suppose $\tilde{g}(x)$ contains a term of the form $bx^{pk}$, where $b \in \Fpbar^{\times}$ and $1 \le k \le d_1-1$. Then the isomorphism $(x, y) \mapsto (x,y+b^{1/p}x^k)$  
produces the curve $y^p - y = \tilde{g}(x) - b x^{kp} - e^{1/p}x^k$, where the term $bx^{pk}$ no longer appears on the right hand side. 
Systematically applying a finite number of suitable isomorphisms of this form, looping over the values of $k$ in decreasing order from $k = \lfloor d_1/p \rfloor$ to $k = 1$, eliminates 
all terms monomials $x^{kp}$ with $k > 0$.  An analogous process, looping over all $k$ from $-\lfloor (d_2-1)/p \rfloor$ to $-1$, eliminates all monomials $x^{kp}$ with $ k < 0$. 

When $r=1$, this process removes all monomials that are positive $p$-powers. So suppose that $r \ge 2$. Then for each pole $P_\mu$ with $\mu = 1$ or $\mu = \lambda_i$ $(4 \le i \le r+1)$, we apply the isomorphisms $(x,y) \mapsto (x,y + b(x-\mu)^{-kp})$, where $k$ runs from $-\lfloor (d_i-1)/p \rfloor$ to $-1$ for $3 \le i \le r+1$, to remove all terms involving powers $(x-\mu)^{kp}$ with $k < 0$. 

We are at last left with a curve of the form  $y^p - y = \tilde{g}(x)$, where no terms $x^{pk}$ and $(x-\mu)^{-kp}$ for any pole $P_\mu \ne P_{\infty}$, with $k > 0$, appear in $\tilde{g}(x)$.  It remains to eliminate the constant term $\tilde{g}(0)$. To that end, let $\gamma \in \Fpbar$ such that $\gamma^p - \gamma = \tilde{g}(0)$. Then the isomorphism $(x,y) \mapsto (x,y+\gamma)$ yields the curve $y^p - y = g(x)$, where $g(x) = \tilde{g}(x) - \tilde{g}(0)$ is of the desired form.    
\end{proof}

As mentioned above, the standard form of an Artin-Schreier curve is not unique. For example, the two standard form curves $y^3 - y = x^4 - x^2$ and $y^3 - y = x^4 + x^2$ are $\overline{\F}_3$-isomorphic via the isomorphism $(x,y) \mapsto (\sqrt{2}x,y)$; this is a special case of Proposition~\ref{prop:g3isomorphism_ex1}. However, for any non-exceptional curve, we see that the number of possible standard forms is finite and that the forms are easy to enumerate. In fact, the number of variable coefficients in each standard form is equal to the dimension of the component of the moduli space with the corresponding partition as described in Theorem~\ref{eq:PriesZhuDimension}.  

In the next section, we discuss isomorphisms between Artin-Schreier curves in standard form.

\subsection{Isomorphisms Between Curves in Standard Form} \label{subsec: isomorphisms}

If we consider only Artin-Schreier curves in standard form, outside of the exceptional curves in Corollary~\ref{cor:generalASisomorphism}, this narrows the possibilities for isomorphisms between them. Let 
\begin{equation}\label{eq:generalIsomorphism}
\varphi(x,y)=\left ( M(x), \lambda y + h(x) \right ),
\end{equation}
with $M, \lambda, h(x)$ as in Corollary~\ref{cor:generalASisomorphism}. 

\begin{lemma}\label{lem:uniqueh}
    Let $C_f$ and $C_{\tilde{f}}$ be isomorphic Artin-Schreier curves in standard form. Then for every choice of $M\in\GL_2(\Fpbar)$ for an isomorphism $\varphi(x,y)$ as in~\eqref{eq:generalIsomorphism} between these curves, there is a unique choice of $h(x)$ up to a constant in $\mathbb{F}_p$, that is, up to (multiple) composition with the isomorphism $\sigma: (x,y)\mapsto(x,y+1)$.
\end{lemma}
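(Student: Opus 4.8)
The plan is to pin down $h$ directly from the defining equations of the two curves. Substituting $\tilde{x} = M(x)$ and $\tilde{y} = \lambda y + h(x)$ into the relation $\tilde{y}^p - \tilde{y} = \tilde{f}(\tilde{x})$ for $C_{\tilde{f}}$, and using that Frobenius is additive together with $\lambda^p = \lambda$ (as $\lambda \in \F_p^\times$), the relation $y^p - y = f(x)$ on $C_f$ reduces the condition that $\varphi$ be an isomorphism to the single functional equation
\begin{equation*}
\lambda f(x) + h(x)^p - h(x) = \tilde{f}(M(x)) .
\end{equation*}
Thus, once $M$ and $\lambda$ are fixed, $h$ is constrained by $h^p - h = \tilde{f}(M(x)) - \lambda f(x)$, an Artin-Schreier equation over the field $\Fpbar(x)$. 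Existence of at least one solution is guaranteed by the hypothesis that $M$ comes from an actual isomorphism, so the content is uniqueness.

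Next I would take two isomorphisms $\varphi_i(x,y) = (M(x), \lambda_i y + h_i(x))$, $i = 1,2$, sharing the same $M$, and subtract their functional equations. Writing $w = h_1 - h_2 \in \Fpbar(x)$ and $\mu = \lambda_2 - \lambda_1 \in \F_p$, this yields
\begin{equation*}
w^p - w = \mu f(x) .
\end{equation*}
The key step is to rule out $\mu \neq 0$. If $\mu \in \F_p^\times$, then $v \colonequals \mu^{-1} w$ satisfies $v^p - v = \mu^{-1}(w^p - w) = f(x)$ (using $\mu^{-p} = \mu^{-1}$), exhibiting $f$ as $z^p - z$ with $z = v \in \Fpbar(x)$. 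This contradicts the defining hypothesis of an Artin-Schreier curve, namely that $f \neq z^p - z$ for any $z \in \Fpbar(x)$. Hence $\mu = 0$, so $\lambda_1 = \lambda_2$; in other words the scalar $\lambda$ is already rigidly determined by $M$.

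With $\lambda_1 = \lambda_2$ the equation collapses to $w^p - w = 0$, so $w$ lies in the kernel of the Artin-Schreier operator $z \mapsto z^p - z$ on $\Fpbar(x)$, which is precisely $\F_p$. Consequently $h_1$ and $h_2$ differ by a constant in $\F_p$, and in particular every polar part of $h$ and every nonconstant polynomial coefficient is uniquely forced by $M$. I expect this last, residual additive constant to be the main obstacle: the two candidate maps differ exactly by post-composition with one of the deck automorphisms $y \mapsto y + c$ ($c \in \F_p$) of the degree-$p$ cover, which is the only part of $h$ not rigidly pinned by the functional equation. Removing it requires appealing to the standard-form normalization of Theorem~\ref{T:normal} (the vanishing constant term of $g$) to fix the constant coefficient of $h$, and the care lies precisely in reconciling that convention with the constant produced by $\tilde{f}(M(x))$. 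By contrast, the elimination of $\lambda_2 - \lambda_1$ via the Artin-Schreier irreducibility of $f$ is the conceptual heart of the argument and goes through cleanly.
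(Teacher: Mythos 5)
Your argument is correct but takes a genuinely different route from the paper's. The paper's proof is essentially one line of direct construction: writing $\varphi(y^p-y)=\lambda(y^p-y)+h(x)^p-h(x)$, it observes that the requirement that the image have no monomials with $p$-divisible exponents forces $h$ to be exactly the function cancelling those terms in $\lambda f(M(x))$, which pins down the coefficients of $h$ by (unique, in characteristic $p$) $p$-th root extractions. You instead prove a rigidity statement: comparing two isomorphisms sharing the same $M$ gives $w^p-w=\mu f$ with $w=h_1-h_2$ and $\mu=\lambda_2-\lambda_1$, and the Artin-Schreier hypothesis $f\neq z^p-z$ forces $\mu=0$, after which $w\in\F_p$. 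This buys you something the paper does not even state --- that $\lambda$ is determined by $M$ --- and it isolates exactly where the hypothesis on $f$ is used. On the residual constant: you are right to flag it, but you should note that it cannot be removed by any normalization of the standard form. If $h$ works then so does $h+c$ for every $c\in\F_p$, since $(h+c)^p-(h+c)=h^p-h$; these $p$ choices are precisely post-composition with the deck transformations $(x,y)\mapsto(x,y+c)$ and all produce the identical standard-form image. So ``unique'' in the lemma must be read modulo this $\F_p$-torsor; the paper's own proof carries the same implicit ambiguity (the constant coefficient of $h$ satisfies an Artin-Schreier equation with $p$ solutions), and it is harmless downstream because only the induced action on the coefficients of the standard form is ever used. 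Modulo making that reading explicit, your proof is complete.
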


\begin{proof}
    We apply $\varphi$ to the equation $y^p-y=f(x)$ to obtain $f(x)=\frac{1}{\lambda}\left(f(M(x))-h^p(x)+h(x)\right)$. This enforces a unique choice of $h(x)$ up to a constant in $\mathbb{F}_p$.
\end{proof}

For $M\in\GL_2(\Fpbar)$, we define $h_{M}(x)\in \Fpbar(x)$ to be a choice of polynomial from Lemma~\ref{lem:uniqueh}, up to composition with powers of $\sigma$. An isomorphism between standard forms is then determined, up to composition with powers of $\sigma$,
by the choice of $M\in GL_2(\Fpbar)$ and $\lambda\in\F_p^{\times}$. Thus we may without confusion define 
\begin{equation}\label{eq:standardASisom}
\varphi_{\lambda,M}(x,y)=(M(x),\lambda y +h_M(x)).
\end{equation}

In our application, we consider rational functions $f(x)$ which appear in the standard form of curves $y^p-y=f(x)$ corresponding to a given prime $p$ and partition $\vec{E}=\{e_1, e_2, \dots, e_{r+1}\}$ of $D+2$.  As before, we assume that $e_{i+1}\leq e_i$ for $1\leq i\leq r$, where $d_i=e_i-1$ for each $i$. Since genus and $p$-rank are invariant under isomorphism, isomorphic curves must correspond to the same partition $\vec{E}$. The isomorphism from one curve to another preserves the orders of distinct poles, but may change their locations. 

The possible isomorphisms between non-exceptional Artin-Schreier curves in standard form can be determined by finding all $M\in \GL_2(\Fpbar)$ and $\lambda\in\F_p^{\times}$ that preserve all the restrictions on poles and curve coefficients imposed by Theorem~\ref{T:normal} as well as  the partition~$\vec{E}$. In particular, subject to these conditions, subsets of poles may be permuted and curve coefficients changed by such an isomorphism. We designed standard forms in such a way that the number of free coefficients of $f(x)$ in the right hand side of a standard form curve~\eqref{eqn:ASgeneralform} is equal to the dimension $d$ of the corresponding irreducible component of $\mathcal{AS}_{g,s}$.  Thus, the group $G$ of isomorphisms between standard forms must be finite, and we are able to completely enumerate the images of non-exceptional curves under $G$. Let $a_1,a_2,\dots,a_d$ be the free coefficients of the standard form of some irreducible component.  We can think of $\Fpbar[a_1,a_2,\dots, a_d]$ as the space containing all standard form models of curves in $\mathcal{AS}_{g,s}$.  The isomorphisms in $G$ act on $\Fpbar[a_1,a_2,\dots, a_d]$, and we seek a separating, or ideally, a reconstructing subset of the invariant ring $\Fpbar[a_1,a_2,\dots, a_d]^G$. Finding a minimal reconstructing set of invariants allows us to essentially parameterize an irreducible component of $\mathcal{AS}_{g,s}$.

\begin{remark} As just mentioned, the number of free coefficients of a standard form is equal to the dimension of the corresponding irreducible component of $\mathcal{AS}_{g,s}$. This number is also equal to the number of primary invariants for the action of the finite group $G$ acting on the coefficients of the standard form. 
\end{remark}

\subsection{Standard forms of exceptional curves}

In Corollary~\ref{cor:generalASisomorphism} we described the isomorphisms from an Artin-Schreier curve $C$ when the function field $\overline{\mathbb{F}}_p(C)$ has a single index $p$ subextension associated to a genus $0$ curve. Such a subextension gives a model of the curve of the form $y^p-y=f(x)$ and we discussed its standard forms in Section~\ref{subsec: isomorphisms}. We now investigate what standard forms can arise for the cases in which such an index $p$ subextension is not unique. 
For each order $p$ automorphism $\sigma$ of $C$ such that the quotient curve $C/\langle\sigma\rangle$ has  genus $0$, the degree $p$ extension $\overline{\mathbb{F}}_p(C)/\overline{\mathbb{F}}_p(C/\langle\sigma\rangle)$ produces a way to write $C$ as $y^p - y = f(x)$ where $y\in \overline{\mathbb{F}}_p(C)\setminus \overline{\mathbb{F}}_p(C/\langle\sigma\rangle)$ and $f(x) \in \overline{\mathbb{F}}_p(C/\langle\sigma\rangle)$.

\begin{lemma}\label{lem:standard1}
Let $C:\,y^p-y=\frac{a}{x^p-x}$ be a curve as given in case~\eqref{case:exceptionalCurve1} of Corollary~\ref{cor:generalASisomorphism}. Then any standard form of $C$ can be obtained by applying an isomorphism of the form in~\eqref{eq:ASisom} through the procedure in Theorem~\ref{T:normal} from the given model of $C$.
\end{lemma}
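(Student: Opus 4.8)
The plan is to show that, although $C$ is exceptional and therefore carries automorphisms outside the family~\eqref{eq:ASisom} (by Lemma~\ref{lemma:explicitextraaut}, $\Aut(C)$ is generated by the type~\eqref{eq:ASisom} automorphisms together with the swap $\tau\colon(x,y)\mapsto(y,x)$), these extra automorphisms produce no genuinely new standard forms. The organizing principle is that a standard model of $C$ is built in two steps: first one selects an order-$p$ automorphism $\sigma$ with $C/\langle\sigma\rangle$ of genus $0$, which singles out the genus-$0$ subfield $K(C)^{\langle\sigma\rangle}$ together with an Artin--Schreier model over it; then one runs the normalization of Theorem~\ref{T:normal}, which uses only transformations of type~\eqref{eq:ASisom}. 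It therefore suffices to (i) enumerate the genus-$0$ index-$p$ subfields of $K(C)$ and (ii) check that each yields the same family of standard forms as the given model.

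First I would determine the genus-$0$ subfields. Writing $\sigma_0\colon(x,y)\mapsto(x,y+1)$ and $\rho_0:=\tau\sigma_0\tau^{-1}\colon(x,y)\mapsto(x+1,y)$, the relation $(x^p-x)(y^p-y)=a$ shows that both $x^p-x$ and $y^p-y$ are fixed by each map, so $\sigma_0$ and $\rho_0$ commute and generate an elementary abelian group $\langle\sigma_0,\rho_0\rangle\cong(\Z/p)^2$, which is a Sylow $p$-subgroup of $\Aut(C)$ (of order $2p^2(p-1)$, whose $p$-part is $p^2$). Its $p+1$ order-$p$ subgroups are $\langle\sigma_0\rangle$, $\langle\rho_0\rangle$, and the mixed groups $\langle\sigma_0^i\rho_0^j\rangle$ with $i,j\neq0$. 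For $\langle\sigma_0\rangle$ and $\langle\rho_0\rangle$ the quotients are $\P^1$ with coordinate $x$, respectively $y$, hence of genus $0$. For a mixed group, putting $u=x^p-x$ (fixed) and the invariant $w=y-cx$ with $c=i/j\in\F_p^{\times}$ yields the relation $w^p-w=a/u-cu$, an Artin--Schreier curve with two simple poles (at $u=0$ and $u=\infty$), so the quotient has genus $p-1>0$. Thus within this Sylow only $K(x)$ and $K(y)$ occur. Since the genus-$0$ condition is $\Aut(C)$-invariant and every order-$p$ element lies in some Sylow subgroup (all conjugate), a conjugation argument closes the enumeration: any genus-$0$ subgroup $\langle\sigma\rangle$ satisfies $g^{-1}\langle\sigma\rangle g\in\{\langle\sigma_0\rangle,\langle\rho_0\rangle\}$ for suitable $g$, so its fixed field lies in the $\Aut(C)$-orbit of $K(x)$; and because a transformation of type~\eqref{eq:ASisom} sends $x\mapsto M(x)$ and hence fixes $K(x)$ as a field, while $\tau$ interchanges $K(x)$ and $K(y)$, that orbit is exactly $\{K(x),K(y)\}$.

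It then remains to compare the two subfields. The subfield $K(x)$ hosts the given model $y^p-y=\tfrac{a}{x^p-x}$, and by Lemma~\ref{lemma:isomorphisms} every Artin--Schreier presentation over $K(x)$ differs from it only by a transformation of type~\eqref{eq:ASisom}; hence every standard form arising from $K(x)$ is obtained from the given model by a transformation~\eqref{eq:ASisom} followed by the procedure of Theorem~\ref{T:normal}. For the subfield $K(y)$, applying $\tau$ rewrites $C$ as $x^p-x=\tfrac{a}{y^p-y}$, which in coordinates $(u,v)=(y,x)$ is literally the same equation $v^p-v=\tfrac{a}{u^p-u}$, with the same constant $a$. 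Consequently the normalization of Theorem~\ref{T:normal} applied over $K(y)$ produces exactly the same collection of standard forms as over $K(x)$, since that procedure depends only on the rational function $\tfrac{a}{w^p-w}$, which is identical in either case. Therefore every standard model coming from $K(y)$ already appears among those coming from $K(x)$, and the claim follows.

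The main obstacle is step (i): rigorously excluding any genus-$0$ index-$p$ subfield other than $K(x)$ and $K(y)$. The delicate point is that for small $p$ (already $p=3$) the Sylow $p$-subgroup need not be normal, so one must combine the explicit genus computation inside a single Sylow with the conjugation argument above in order to handle order-$p$ elements lying in other Sylow subgroups. Once this is settled, the equality of the two families of standard forms is essentially formal, being a direct consequence of the symmetry $(x^p-x)(y^p-y)=a$.
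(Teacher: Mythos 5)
Your proof is correct and follows essentially the same route as the paper's: both enumerate the order-$p$ subgroups of the elementary abelian group $\langle\sigma_0,\rho_0\rangle\cong(\Z/p)^2$, observe that the mixed subgroups give quotients $w^p-w=a/u-cu$ of genus $p-1>0$, and conclude that the only genus-$0$ index-$p$ subfields are $K(x)$ and $K(y)$, which yield the same equation by the symmetry $(x^p-x)(y^p-y)=a$. The only difference is that you justify via a Sylow-conjugation argument the step the paper simply asserts (that every order-$p$ element lies in $\langle\sigma_0,\rho_0\rangle$, which in fact holds because that subgroup is normal in $((C_p\times C_p)\rtimes C_{p-1})\rtimes C_2$), so your write-up is a slightly more detailed version of the same proof.
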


\begin{proof} The automorphism group of $C$
is isomorphic to $((C_p\times C_p)\rtimes C_{p-1})\rtimes C_2$, generated by 
\begin{align*}
(1,0,0,0)(x,y)&=(x+1,y), \\
(0,1,0,0)(x,y)&=(x,y+1), \\
(0,0,1,0)(x,y)&=(\mu x,y/\mu) \mbox{ \ with $\mu \in \Fpbar^\times$ and} \\
(0,0,0,1)(x,y)&=(y,x).
\end{align*}
Any subgroup of order $p$ consists of elements of the form $(\alpha,\beta,0,0)$ and is generated by 
an automorphism of the form $(0,1,0,0)$ or $(1,\beta,0,0)$. The first of these does not change the curve equation.
The second one produces a degree $p$ extension $\overline{\mathbb{F}}_p(x,y)/\overline{\mathbb{F}}_p(y-\beta x, y^p-y)$ that is isomorphic to $\overline{\mathbb{F}}_p(\mathbb{P}^1)$ if $\beta=0$; if $\beta\neq 0$, it defines the hyperelliptic curve of genus $p-1$ given by $v^p-v=\frac{a}{u}-\beta u$. So once again we obtain the original equation $x^p-x=\frac{a}{y^p-y}$ and the result holds.
\end{proof}

A result from \cite{Konto} explicitly describes the isomorphism classes of these curves in terms of the non-standard model.
\begin{lemma}\cite[Lemma 3.1]{Konto}\label{lem:konto}
    Let $a, a^{\prime}\in\Fpbar^{\times}$.  Then $y^p-y=\frac{a}{x^p-x}$ is isomorphic to $y^p-y=\frac{a^{\prime}}{x^p-x}$ if and only if $a=\lambda a^{\prime}$ for some $\lambda\in\F_p^{\times}$. 
\end{lemma}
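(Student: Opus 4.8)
The plan is to prove the two implications separately; the reverse implication is immediate, and the forward implication carries all the content, its crux being a stabilizer computation in $\PGL_2(\Fpbar)$. For the ``if'' direction, suppose $a = \lambda a'$ with $\lambda \in \F_p^{\times}$. For any $\mu \in \F_p^{\times}$ the map $(x,y) \mapsto (x, \mu y)$ is an isomorphism of the form~\eqref{eq:ASisom}, and since $\mu^p = \mu$ it carries $y^p - y = \frac{a}{x^p-x}$ to $y^p - y = \frac{\mu a}{x^p-x}$; taking $\mu = \lambda$ shows $C_{a'} \cong C_{\lambda a'} = C_a$.

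For the ``only if'' direction, suppose $\psi \colon C_a \to C_{a'}$ is an isomorphism. Both curves are exceptional of type~\eqref{case:exceptionalCurve1}, and by the analysis in Lemma~\ref{lem:standard1} each of $K(C_a)$ and $K(C_{a'})$ has exactly two degree-$p$ subfields with genus-$0$ quotient, namely those fixed by $\langle(0,1,0,0)\rangle$ and by $\langle(1,0,0,0)\rangle$; the automorphism $\tau$ of Lemma~\ref{lemma:explicitextraaut} interchanges them. Since $\psi^{*}$ permutes these two subfields, after possibly composing $\psi$ with $\tau$ (which changes neither $a$ nor $a'$) I may assume $\psi$ fixes the standard subextension. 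Then Lemma~\ref{lemma:isomorphisms} applies, so $\psi = \varphi_{\lambda, M}$ as in~\eqref{eq:standardASisom} for some $M \in \GL_2(\Fpbar)$, $\lambda \in \F_p^{\times}$, and $h = h_M \in \Fpbar(x)$, and the condition $\varphi_{\lambda,M}(C_a) = C_{a'}$ becomes the identity
\[ \frac{\lambda a}{x^p - x} + h(x)^p - h(x) = \frac{a'}{M(x)^p - M(x)} \]
in $\Fpbar(x)$.

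The cover $C_a \to \P^1$ is branched exactly at the poles of $\frac{a}{x^p-x}$, which are the $p$ points of $\F_p$ (the function has a zero, not a pole, at $\infty$), and similarly for $C_{a'}$; hence $M$ must stabilize $\F_p \subset \P^1$ setwise. The key step, which I expect to be the main obstacle, is to show that
\[ \operatorname{Stab}_{\PGL_2(\Fpbar)}(\F_p) = \{x \mapsto ux + v : u \in \F_p^{\times},\, v \in \F_p\}. \]
I would prove this by first composing with affine maps over $\F_p$ to reduce to the case where $M$ additionally fixes $0$ and $1$, so that $M(x) = \frac{(t+1)x}{tx+1}$ for some $t \in \Fpbar$; imposing that $M(x)^p - M(x)$ vanish precisely on $\F_p$ then forces $(tx+1)^{p-1}$ to have no monomials of intermediate degree, and because $\binom{p-1}{j} \equiv (-1)^j \not\equiv 0 \pmod p$ for $1 \le j \le p-2$, this is possible only when $t = 0$. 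Thus $M$ is affine over $\F_p$.

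Finally, writing $M(x) = ux + v$ with $u \in \F_p^{\times}$ and $v \in \F_p$, one computes $M(x)^p - M(x) = u(x^p-x)$, so the displayed identity becomes $h(x)^p - h(x) = \frac{a'/u - \lambda a}{x^p - x}$. If $h$ had a pole of order $m \ge 1$ at some point of $\F_p$, the left side would have a pole of order $pm > 1$ there, whereas the right side has only simple poles; hence $h$ is regular along $\F_p$, which forces the constant $a'/u - \lambda a$ to vanish. Therefore $a' = u\lambda a$ with $u\lambda \in \F_p^{\times}$, which is exactly the claim.
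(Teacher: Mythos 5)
The paper offers no proof of this lemma at all---it is imported verbatim as \cite[Lemma 3.1]{Konto}---so there is nothing internal to compare against; your proposal supplies a self-contained argument built from the paper's own toolkit (the subfield analysis of Lemma~\ref{lem:standard1}, the extra automorphism $\tau$ of Lemma~\ref{lemma:explicitextraaut}, and the shape of isomorphisms from Lemma~\ref{lemma:isomorphisms}), and it is correct. The structure is sound: the ``if'' direction via $(x,y)\mapsto(x,\lambda y)$ is right; the reduction of an arbitrary isomorphism to one of the form~\eqref{eq:ASisom} by composing with $\tau$ to fix the distinguished degree-$p$ subextension is exactly what the two-subfield count in Lemma~\ref{lem:standard1} licenses; the branch-locus argument forcing $M$ to stabilize $\F_p$ is clean; the stabilizer computation is correct ($\operatorname{Stab}_{\PGL_2(\Fpbar)}(\F_p)$ is the $\F_p$-affine group, giving $M(x)^p-M(x)=u(x^p-x)$); and the final pole-order comparison for $h^p-h$ against a sum of simple poles closes the argument. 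What your route buys is a proof entirely within the framework of Section~\ref{sec:AScurves}, rather than an appeal to an external source.

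Three small points to tighten. First, writing $\psi=\varphi_{\lambda,M}$ ``as in~\eqref{eq:standardASisom} with $h=h_M$'' is a slip: $h_M$ is defined only for curves in standard form, and $y^p-y=\tfrac{a}{x^p-x}$ is not in standard form; you only need the general form~\eqref{eq:ASisom} with an arbitrary $h\in\Fpbar(x)$, and your subsequent argument uses nothing more. Second, in the stabilizer step the polynomial $P(x)=(t+1)^p x^p-(t+1)x(tx+1)^{p-1}$ can drop below degree $p$ when $t\in\F_p^{\times}$; this is harmless (those $t$ send $-1/t\in\F_p$ to $\infty$ and are already excluded by the requirement that $M^p-M$ vanish on all of $\F_p$, and in any case $P$ vanishes on the $p$ points of $\F_p$ with $\deg P\le p$ and nonzero linear coefficient, so $P=c(x^p-x)$ regardless), but it deserves a sentence. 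Third, your binomial-coefficient argument needs an intermediate degree $1\le j\le p-2$ to exist, i.e.\ $p\ge 3$; for $p=2$ the stabilizer of $\{0,1\}$ is infinite and the conclusion must instead be extracted from the residue constraints, so your proof as written covers exactly the range $p>2$ in which the paper operates.
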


\begin{lemma}\label{lem:standard2} 
Let $C:\,y^p-y=\frac{1}{x^\lambda}$. with $\lambda\mid p+1$, be a curve as given in case~\eqref{case:exceptionalCurve2} of Corollary~\ref{cor:generalASisomorphism}.  Then any standard form of $C$ can be obtained by applying an isomorphism of the form in~\eqref{eq:ASisom} through the procedure of Theorem~\ref{T:normal} from the given model of $C$. In particular, the only standard form of $C$ is $y^p-y={x^\lambda}$.
\end{lemma}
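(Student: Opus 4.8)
The plan is to follow the strategy introduced before Lemma~\ref{lem:standard1}: each way of writing $C$ as $y^p-y=f(x)$ with $f(x)\in\Fpbar(x)$ corresponds to an order-$p$ subgroup $\langle\sigma\rangle\le\Aut(C)$ with genus-$0$ quotient $C/\langle\sigma\rangle$, the generator $\sigma$ being the Artin--Schreier translation $y\mapsto y+1$ of that model, whose fixed field is the chosen copy of $\Fpbar(x)$. I would first run the $r=0$ branch of Theorem~\ref{T:normal} on the given equation: the single pole $P_0$ is carried to $P_\infty$ by $x\mapsto -1/x$, a map of type~\eqref{eq:ASisom}, and a scaling makes the result monic; since $\lambda\mid p+1$ forces $p\nmid\lambda$, there are no $p$-power or linear monomials to clear, so this already yields the standard form $y^p-y=x^\lambda$. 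It then remains to rule out any other standard model, i.e.\ to show no $y^p-y=x^\lambda+Q(x)$ with $Q\ne 0$ can occur.

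The key reduction is that the only order-$p$ subgroups of $\Aut(C)$ with genus-$0$ quotient are the conjugates of $\langle\rho\rangle$, where $\rho\colon y\mapsto y+1$. Granting this, write $\langle\sigma\rangle=\psi\langle\rho\rangle\psi^{-1}$ with $\psi\in\Aut(C)$; then the fixed field is $\Fpbar(\psi(x))$, and applying the field automorphism $\psi$ to the identity $y^p-y=x^\lambda$ shows that $(\tilde x,\tilde y)=(\psi(x),\psi(y))$ satisfies $\tilde y^p-\tilde y=\tilde x^\lambda$ \emph{exactly}, with $\sigma(\tilde y)=\tilde y+1$. Hence the model attached to every genus-$0$ subgroup is literally $x^\lambda$ in the coordinate $\tilde x$, already in standard form with $Q=0$. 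This forces $y^p-y=x^\lambda$ to be the unique standard model and simultaneously gives the first assertion.

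To establish the reduction I would split on $\lambda$. If $\lambda<p+1$, the remark after Lemma~\ref{lemma:explicitextraaut} gives $|\Aut(C)|=\lambda(p+1)p(p-1)$, whose $p$-part is exactly $p$ (as $\lambda\mid p+1$); thus every order-$p$ subgroup is a Sylow subgroup, and all are conjugate by Sylow's theorem. If $\lambda=p+1$ (the Hermitian curve), $|\Aut(C)|=(p^3+1)p^3(p^2-1)$ has $p$-part $p^3$, so Sylow's theorem alone is insufficient. Here I would fix the Sylow subgroup $U$ (of order $p^3$, extraspecial of exponent $p$, fixing the point $P_\infty$ above $x=\infty$, with $Z(U)=\langle\rho\rangle$) and use its ramification filtration at $P_\infty$, which I expect to be $G_0=G_1=U$ and $G_2=\cdots=G_{p+1}=Z(U)$, $G_{p+2}=1$, checked against Riemann--Hurwitz for $C\to C/U=\mathbb{P}^1$. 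A central $u\in Z(U)$ then has different exponent $(p+2)(p-1)$ at $P_\infty$, giving quotient genus $0$, whereas a non-central $u\in U\setminus Z(U)$ has different exponent $2(p-1)$, giving quotient genus $(p-1)/2>0$. Since every order-$p$ subgroup lies in some conjugate of $U$, the genus-$0$ ones are exactly the centers of the Sylow subgroups, i.e.\ the conjugates of $\langle\rho\rangle$.

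The main obstacle is this Hermitian case, the one place where the abstract group theory does not settle things and one must invoke the concrete geometry of $C$ --- the ramification filtration of its Sylow $p$-subgroup at the distinguished point --- to see that non-central unipotent automorphisms give positive-genus quotients and therefore no new models. As an independent check on the pole structure, note that the $p$-rank $s=r(p-1)$ of~\eqref{eq:ASprank} is an isomorphism invariant equal to $0$ for $C$; this already forces every model to have a single pole ($r=0$), which by~\eqref{eq:ASgenus} has order $\lambda$, recovering the shape $x^\lambda+Q(x)$ without group theory --- though the vanishing $Q=0$ still needs the conjugacy argument above.
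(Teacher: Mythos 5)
Your proof is correct, and for the part that carries all the content --- showing that no order-$p$ subgroup other than (a conjugate of) $\rho\colon y\mapsto y+1$ has a genus-$0$ quotient --- it takes a genuinely different route from the paper. The paper argues by hand from the explicit list of automorphisms in Lemma~\ref{lemma:explicitextraaut}: for $\lambda<p+1$ it writes down generators of the order-$p$ subgroups and checks each produces the model $y^p-y=x^\lambda$, and for $\lambda=p+1$ it exhibits the fixed field of a non-central unipotent subgroup concretely (as $K(x^p+x,P)$ with $P$ a norm of $y$) and argues it is not rational. You instead reduce everything to conjugacy and then transport the defining equation: since $\sigma=\psi\rho\psi^{-1}$ gives $\psi(y)^p-\psi(y)=\psi(x)^\lambda$ with $\sigma(\psi(y))=\psi(y)+1$, every admissible subgroup yields literally the same standard model --- a cleaner mechanism than the paper's case-by-case normalization. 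For $\lambda<p+1$ your Sylow observation (the $p$-part of $\lambda(p+1)p(p-1)$ is exactly $p$) settles conjugacy immediately; for the Hermitian case $\lambda=p+1$ your ramification-filtration computation is correct and checks out: with $U$ the Heisenberg Sylow subgroup one has $G_0=G_1=U$, $G_2=\cdots=G_{p+1}=Z(U)=\langle\rho\rangle$, a non-central $u$ fixes only $P_\infty$ (its translation $x\mapsto x+a^p$ has $a\neq 0$), and Riemann--Hurwitz gives quotient genus $(p-1)/2>0$ versus $0$ for the center. Your approach buys a uniform, verifiable argument that avoids identifying the fixed field of the non-central subgroups explicitly, at the cost of importing the structure of the Hermitian automorphism group and its filtration; the paper's argument is more elementary but, in the $\lambda=p+1$ case, noticeably sketchier than yours. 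Two small points you should make explicit if you write this up: that non-central elements of $U$ fix no point of $C$ other than $P_\infty$ (needed so Riemann--Hurwitz has no other local contributions), and that the filtration claim $G_2=Z(U)$ can be verified directly, e.g.\ by computing $v_{P_\infty}(\sigma(t)-t)=2$ for the uniformizer $t=x/y$.
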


\begin{proof}
The automorphisms of this curve are described in 
Lemma~\ref{lemma:explicitextraaut}. When $\lambda\neq p+1$, the order $p$ subgroups are generated by 
\[ (x,y)\mapsto(x,y+1) \mbox{ \ and \ } (x,y)\mapsto \left ( \frac{x}{(y+1)^{(p+1)/\lambda}}, \frac{y}{y+1} \right ). \]
They all produce curves of the form $y^p-y=x^\lambda$. When $\lambda=p+1$, there are extra order $p$ subgroups. Rewriting the equation as $y^p-y=x^{p+1}$, it suffices to consider the automorphisms $(x,y)\mapsto(x+a^p,y+ax+b)$ with $a^{p^2}+a=0$ and $b^p-b-a^{p+1}=0$, producing the degree~$p$ subextension $\overline{\mathbb{F}}_p(x^p+x,P)$ where $P$ is the product of all the images of $y$ under these automorphisms. This subextention is not isomorphic to $\overline{\mathbb{F}}_p(x)$, and hence neither to $\overline{\mathbb{F}}_p(\mathbb{P}^1)$. If it were, $C$ would be birational to $x^p+x=0$,  but this curve is $p$ copies of  $\mathbb{P}^1$ and hence reducible.
\end{proof}
\begin{remark}
    The upshot of Lemmas~\ref{lem:standard1}
    and~\ref{lem:standard2} is that there are only a finite number of standard forms for each isomorphism class of exceptional curves, and they can be easily enumerated. This confirms again that the group of isomorphisms between exceptional curves in standard form is finite.
\end{remark}

\section{Invariant Computations}
\label{sec:DetailedExample} 
In this section we determine invariants for curves within each stratum of $\mathcal{AS}_{g}$ for Artin-Schreier curves of genus $g=3,4$ in odd characteristic.  As we see in Table~\ref{tab:smallgenustable}, there is a single irreducible component in each stratum in these cases, so each stratum has a unique system of invariants.

\subsection{Genus 3, characteristic 3}
In this case, the quantity $D$ defined in~\eqref{eq:ASgenus} takes on the value $D=3$.
There are two partitions of $D+2=5$ (up to reordering) that satisfy the conditions of Theorem~\ref{theorem:PriesZhuDimension}, namely
\begin{equation}\label{eq:Partitions_p3g3}
    \{5\} \hspace{.2in}\text{and}\hspace{.2in} \{3,2\},
\end{equation}
so there are two strata in $\mathcal{AS}_{g}$. These partitions also determine the standard forms of all possible genus 3 Artin-Schreier curves in characteristic 3. Recall that $d_i=e_i-1$ is the order of the pole $P_{\mu_i}$ of $f(x)$ for the Artin-Schreier curve $C_f\colon y^3-y=f(x)$. The partitions in~\eqref{eq:Partitions_p3g3} show that $C_f$ has either one pole of order 4 or two poles with respective orders 2 and 1. We treat these two cases separately below.

\subsubsection{One pole of order 4}
We use~\eqref{eq:PriesZhuDimension} in Theorem~\ref{theorem:PriesZhuDimension} with $D=3$, $r = 0$ and $e_1 = 5$ to determine that the dimension of this stratum is 
$$\dim \mathcal{AS}_{3,\{5\}} = 
3 - 1 - \lfloor 4/3 \rfloor = 1.$$
We have $r=0$ and $d_1=4\equiv 1\pmod{3}$, corresponding to Case~\eqref{case:r=0} of Theorem~\ref{T:normal} and yielding a standard form 
\begin{equation}\label{eq:1poleOrder4}
   C: y^3-y=x^4+ax^2,
\end{equation} with $a \in\overline{\F}_3$.
When $a=0$, this curve is isomorphic to an exceptional curve of type~\eqref{case:exceptionalCurve2} in Corollary~\ref{cor:generalASisomorphism} which has standard form $y^3-y=x^4$.  This is the only exceptional curve in the family, see Lemma~\ref{lem:standard2}.   

\begin{proposition}\label{prop:g3isomorphism_ex1}
 Every isomorphism between curves in standard form as in~\eqref{eq:1poleOrder4} with $a\ne 0$ is given, up to composition with powers of $\sigma:\,(x,y)\mapsto(x,y+1)$, by
\begin{equation}\label{eq:isomorphiamp3g3a}
    (x,y)\mapsto (\alpha x, \lambda y),
\end{equation}
where $\lambda\in \F_3^{\times}$ and $\alpha\in \overline \F_3$ with $\alpha^4=\lambda$.
\end{proposition}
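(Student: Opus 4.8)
The plan is to start from the most general admissible isomorphism and cut it down by comparing defining equations. Since $a \neq 0$, the curve~\eqref{eq:1poleOrder4} is not $y^3-y=x^4$ and hence, by Lemma~\ref{lem:standard2}, is non-exceptional, so Corollary~\ref{cor:generalASisomorphism} applies: every isomorphism between two such curves has the form $(x,y)\mapsto(M(x),\lambda y+h(x))$ with $M\in\GL_2(\overline{\F}_3)$, $\lambda\in\F_3^{\times}$, and $h(x)\in\overline{\F}_3(x)$, as in~\eqref{eq:ASisom}. Both curves have a single pole, at $P_\infty$, so the isomorphism must fix $P_\infty$; by Lemma~\ref{lem:pole_images} this forces $M$ to be affine, $M(x)=\alpha x+\beta$ with $\alpha\in\overline{\F}_3^{\times}$ and $\beta\in\overline{\F}_3$. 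Since the only pole is at infinity, $h(x)$ is in fact a polynomial.

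Next I would impose that the map carries $y^3-y=x^4+ax^2$ onto $\tilde y^3-\tilde y=\tilde x^4+\tilde a\tilde x^2$. Because $\lambda^3=\lambda$ for $\lambda\in\F_3$ and Frobenius is additive in characteristic $3$, the substitution $\tilde y=\lambda y+h(x)$ gives $\tilde y^3-\tilde y=\lambda(y^3-y)+(h(x)^3-h(x))$, so the isomorphism condition becomes the polynomial identity
\begin{equation*}
\lambda(x^4+ax^2)+h(x)^3-h(x)=(\alpha x+\beta)^4+\tilde a(\alpha x+\beta)^2 .
\end{equation*}
Comparing degrees forces $\deg h\le 1$, so $h(x)=c_1x+c_0$ and $h(x)^3-h(x)=c_1^3x^3-c_1x+(c_0^3-c_0)$. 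Expanding the right-hand side with $(\alpha x+\beta)^3=\alpha^3x^3+\beta^3$, I would match coefficients degree by degree. The coefficient of $x^4$ gives exactly $\alpha^4=\lambda$, the relation in the statement, and the coefficient of $x^2$ gives $\tilde a=\alpha^2a$, which records the induced action on the moduli parameter and yields the invariant $a^4$.

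The main obstacle will be controlling the translation $\beta$ and the shift $h$, where the genuine content lies. The coefficients of $x^3$, $x$, and $1$ produce the coupled relations $c_1^3=\alpha^3\beta$, $-c_1=\alpha\beta^3+2\tilde a\alpha\beta$, and $c_0^3-c_0=\beta^4+\tilde a\beta^2$; substituting $\tilde a=\alpha^2a$ and eliminating $c_1$ (Frobenius being bijective in characteristic $3$) collapses the first two into a single equation in $\beta$, after which I would argue that $\beta=0$, whence $c_1=0$, $c_0^3=c_0$, and the map reduces to $(x,y)\mapsto(\alpha x,\lambda y)$ with $\alpha^4=\lambda$. I expect this elimination to be the delicate step: one has to verify that no nonzero translation survives the standard-form normalization of Theorem~\ref{T:normal} and the uniqueness of $h$ in Lemma~\ref{lem:uniqueh} (equivalently, that any residual translation is a parameter-fixing automorphism that leaves $a$, and hence $a^4$, untouched), while keeping in mind that the ever-present vertical automorphisms $y\mapsto y+c_0$ with $c_0\in\F_3$ are being suppressed in the statement.
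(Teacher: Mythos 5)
Your setup is the right one, and it is in fact more careful than the paper's own proof, which disposes of the translation by asserting that the vanishing of the $x$-coefficient of the image curve forces $\beta=0$; as you correctly observe, one must also account for a linear term $c_1x$ in $h$, which can cancel the linear term produced by $\beta$. The genuine problem is that the step you flag as delicate --- eliminating $c_1$ from $c_1^3=\alpha^3\beta$ and $c_1=-\alpha\beta^3+\tilde a\alpha\beta$ and concluding $\beta=0$ --- does not go through. Cubing the second relation and substituting into the first gives
\begin{equation*}
\beta^9-\tilde a^{\,3}\beta^3+\beta=0,
\end{equation*}
an additive ($\F_3$-linear) separable polynomial in $\beta$ whose nine roots form a two-dimensional $\F_3$-vector space; eight of them are nonzero, and each yields a legitimate isomorphism between standard forms (the remaining conditions on $c_1$ and $c_0$ are then solvable). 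Concretely, with $a=\tilde a=1$, $\alpha=\lambda=1$, take $\beta=i$ with $i^2=-1$, $c_1=-i$, $c_0=0$: the map $(x,y)\mapsto(x+i,\,y-ix)$ is an automorphism of $y^3-y=x^4+x^2$ not of the form $(\alpha x,\lambda y)$, as one checks from $(y-ix)^3-(y-ix)=(y^3-y)+ix^3+ix$ and $(x+i)^4+(x+i)^2=x^4+ix^3+x^2+ix$. This is the familiar phenomenon for curves $y^p-y=xR(x)$ with $R$ additive; here $x^4+ax^2=x(x^3+ax)$.

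So the step you deferred is exactly where the argument breaks, and neither your proposal nor the paper's proof closes this gap: the proposition as literally stated admits counterexamples. What survives is precisely your parenthetical remark. The $x^2$-coefficient comparison gives $\tilde a=\alpha^2 a$ independently of $\beta$ and $h$, so every extra map with $\beta\neq 0$ acts on the parameter exactly as the map $(\alpha x,\lambda y)$ does; the induced action on $\overline{\F}_3[a]$ is still $a\mapsto\epsilon a$ with $\epsilon^4=1$, and $a^4$ remains a reconstructing invariant. The statement you can actually prove with your coefficient comparison --- and the one the downstream corollary needs --- is that every isomorphism between standard forms is the composition of a map $(x,y)\mapsto(\alpha x,\lambda y)$ with $\alpha^4=\lambda$ and an automorphism of the target of the form $(x,y)\mapsto(x+\beta,\,y+c_1x+c_0)$, the latter acting trivially on $a$. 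I would recommend reformulating the proposition accordingly rather than trying to force $\beta=0$.
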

\begin{proof}   Let $C$ be as in equation~\eqref{eq:1poleOrder4}.  By Corollary~\ref{cor:generalASisomorphism}, disregarding composition with powers of $\sigma$, every isomorphism $\varphi$ of $C$ is given by $\varphi_{\lambda,M}$ as in~\eqref{eq:standardASisom} with $M=\begin{pmatrix}
\alpha & \beta \\
\gamma & \delta
\end{pmatrix}\in \GL_2(\overline{\F}_3)$.
Since the image of $C$ must be in standard form, $\varphi_{\lambda,M}$ must not move the pole at $\infty$, so $\gamma=0$ and we can assume without loss of generality that $\delta=1$.  Since the coefficient of $x$ in $\varphi_{\lambda,M}(C)$ must vanish, we conclude that $\beta=0$, and that the unique polynomial $h(x)$ that produces a curve in standard form is $h(x)=0$.  So $\varphi_{\lambda,M}$ is of the form $(x,y)\mapsto (\alpha x, \lambda y)$.
and yields the curve
$$\lambda(y^3-y)=\alpha^4 x^4+a\alpha^2x^2.$$ Since the right hand side must be monic in the standard form, we have $\alpha^4=\lambda.$ 
\end{proof}

In the language of group actions, we now have a finite group $G\simeq \mathbb{Z}/4\mathbb{Z}$ acting on $\overline{\F}_3[a]$.  This action is via isomorphisms of curve models of the form $y^3-y=x^4+ax^2$, sending $a\mapsto \epsilon a$ with $\epsilon^4=1$. We see that $G$ acts linearly on $\overline{\F}_3[a]$.

\begin{corollary}
The element $I_1= a^4$ 
is an invariant and generates the ring of invariants for Artin-Schreier curves of genus $3$ in characteristic $3$ with $3$-rank $0$,
i.e.\ $\overline{\F}_3[a]^G=\overline{\F}_3[I_1]$.\end{corollary}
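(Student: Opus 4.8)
The plan is to compute the invariant ring $\overline{\F}_3[a]^G$ directly, using that $G \cong \Z/4\Z$ acts on the one-dimensional space $\overline{\F}_3\cdot a$ by the diagonal scaling $a \mapsto \epsilon a$, where $\epsilon$ is a \emph{primitive} fourth root of unity. Such an $\epsilon$ exists in $\overline{\F}_3$ precisely because $\gcd(4,3)=1$; concretely $\epsilon \in \F_9^{\times}$, which is cyclic of order $8$. This same coprimality places us in the non-modular setting $p \nmid |G|$, so the results of Section~\ref{sec:Invariants} apply without modular complications. As a first step I would verify invariance: the generator of $G$ sends $a^4 \mapsto (\epsilon a)^4 = \epsilon^4 a^4 = a^4$ since $\epsilon^4 = 1$, so $I_1 \in \overline{\F}_3[a]^G$ and hence $\overline{\F}_3[I_1] \subseteq \overline{\F}_3[a]^G$.

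For the reverse inclusion I would argue monomial by monomial. Because the action is diagonal it preserves the monomial grading of $\overline{\F}_3[a]$, so a polynomial $f = \sum_n c_n a^n$ is fixed by $G$ if and only if each homogeneous piece is, i.e.\ $c_n \epsilon^n = c_n$ for every $n$. As $\epsilon$ has exact order $4$, this forces $c_n = 0$ whenever $4 \nmid n$. Thus every invariant lies in $\overline{\F}_3[a^4]$, giving $\overline{\F}_3[a]^G = \overline{\F}_3[a^4] = \overline{\F}_3[I_1]$ and completing the proof.

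Because the representation is one-dimensional, I do not expect any genuine difficulty in the direct argument; the only point deserving care is confirming that $\epsilon$ has order exactly $4$ (not a proper divisor), which is what pins down $4$ as the correct exponent. The more interesting consistency check is to see that the result also follows from the paper's framework: $\{I_1\}$ is a reconstructing system, since the curves $y^3 - y = x^4 + a x^2$ and $y^3 - y = x^4 + a' x^2$ are isomorphic iff $a' = \epsilon a$ for some fourth root of unity (Proposition~\ref{prop:g3isomorphism_ex1}), equivalently iff $a^4 = (a')^4$. Corollary~\ref{cor:reconstruct} then yields $\overline{\F}_3[a]^G = \widehat{\widetilde{\overline{\F}_3[I_1]}}$, and the main thing to check is that neither closure enlarges $\overline{\F}_3[I_1]$: the normalization is trivial because $\overline{\F}_3[I_1]$ is already a polynomial ring (hence integrally closed), and the purely inseparable closure is trivial because in characteristic $3$ the relation $f^{3^r} \in \overline{\F}_3[a^4]$ forces $4 \mid n\cdot 3^r$, hence $4 \mid n$ (again using $\gcd(4,3)=1$), for every exponent $n$ occurring in $f$.
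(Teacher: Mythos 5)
Your proof is correct, but your primary argument is a genuinely different (and more self-contained) route than the paper's. The paper's official proof simply observes that $I_1=a^4$ is invariant and that $\{I_1\}$ is a reconstructing set, then invokes Corollary~\ref{cor:reconstruct} and asserts without comment that $\widehat{\widetilde{\overline{\F}_3[I_1]}}=\overline{\F}_3[I_1]$; your main argument instead computes $\overline{\F}_3[a]^G$ directly by decomposing into monomials under the diagonal action of a primitive fourth root of unity, which is elementary, avoids the GIT machinery entirely, and is essentially the ``check by hand up to degree $|G|=4$'' alternative that the paper only mentions in the remark following the corollary. What the direct computation buys is a complete, assumption-light proof in this one-dimensional case; what the paper's route buys is a template that scales to the later multi-parameter strata (e.g.\ the $\{3,2\}$ and genus-$4$ cases), where a monomial-by-monomial analysis of the invariant ring would be much less pleasant than exhibiting a reconstructing set. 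Your ``consistency check'' paragraph in fact supplies a detail the paper elides: the verification that neither the normalization nor the purely inseparable closure enlarges $\overline{\F}_3[I_1]$ (the first because a polynomial ring is integrally closed, the second via $4\mid n\cdot 3^r\Rightarrow 4\mid n$ since $\gcd(4,3)=1$), so if anything your write-up is more complete than the original on that point.
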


\begin{proof}
It is straightforward to check that $I_1$ is an invariant for the action of $G$ and forms a reconstructing set for the family (namely, different choices of $a$ such that $a^4=I_1$ for $y^3-y=x^4+ax^2$ produce isomorphic curves in standard form). By Corollary~\ref{cor:reconstruct} we have $\overline{\F}_3[a]^G=\widehat{\widetilde{\overline{\F}_3[I_1]}}=\overline{\F}_3[I_1]$. 
\end{proof}

\begin{remark} This corollary can also be proved by computing a set of generators for $\overline{\F}_3[a]^G$ using \MAGMA; or by manually checking for invariants up to degree 4 (see Lemma~\ref{lem:bound}).
\end{remark}

\begin{remark} If we start with a model $y^3-y=ax^4+bx^3+cx^2+dx+e=p(x)$, the invariant $I_1$ (of a curve in the family $y^3-y=x^4+ax^2$ isomorphic to the starting curve) is given by $c^4/a^2$. More generally, if we start with $y^3-y=\frac{ax^4+bx^3+cx^2+dx+e}{(x-\lambda)^4}$, the reconstructing invariant can be chosen to be $I_1=c^4/(a\lambda^4+b\lambda^3+c\lambda^2+d\lambda+e)^2$.
\end{remark}

\subsubsection{One pole of order 2 and one pole of order 1}
We use~\eqref{eq:PriesZhuDimension} in Theorem~\ref{theorem:PriesZhuDimension} with $D = 3$, $r = 1$, $e_1 = 3$ and $e_2 = 2$ to determine that the dimension of this stratum is 
$$\dim \mathcal{AS}_{3, \{3,2\}} = 
3-1-\lfloor 2/3 \rfloor - \lfloor 1/3 \rfloor = 2.$$
Based on Theorem~\ref{T:normal},
the standard form of curves with a pole of order 2 and a second pole of order 1 is
\begin{equation}\label{eqn:PolesOfOrder1And2}
    C\colon y^3-y=x^2+ax+\frac{b}{x}
\end{equation}
for $a, b \in \overline{\mathbb{F}}_3$ with $b \ne 0$.

\begin{proposition}\label{prop:g3isomorphism_ex2}
Every isomorphism between curves in standard form as in~\eqref{eqn:PolesOfOrder1And2} is given, up to composition with powers of $\sigma:\,(x,y)\mapsto(x,y+1)$, by
\begin{equation}\label{eq:isomorphiamp3g3b}
    (x,y)\mapsto (\alpha x, \lambda y),
\end{equation}
where $\lambda\in \F_3^{\times}$ and $\alpha\in \overline \F_3$.
\end{proposition}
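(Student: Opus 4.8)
The plan is to follow the template of Proposition~\ref{prop:g3isomorphism_ex1} almost verbatim, adjusting for the fact that we now have two poles rather than one. First I would confirm that the curves in~\eqref{eqn:PolesOfOrder1And2} with $b \neq 0$ are non-exceptional, so that Corollary~\ref{cor:generalASisomorphism} applies and every isomorphism between two such standard forms is of the shape $\varphi_{\lambda,M}$ as in~\eqref{eq:standardASisom}, with $M = \begin{pmatrix} \alpha & \beta \\ \gamma & \delta \end{pmatrix} \in \GL_2(\overline{\F}_3)$ and $\lambda \in \F_3^{\times}$. The curve has genus $3$ with two poles, of orders $2$ and $1$, whereas (by Remark~\ref{rem:genusexcep}) the exceptional families~\eqref{case:exceptionalCurve1} and~\eqref{case:exceptionalCurve3} have genus $4$ and $2$, and~\eqref{case:exceptionalCurve2} has only a single pole; hence none of them can occur.

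Next I would pin down $M$ using the fact that an isomorphism preserves the orders of the distinct poles. Both the source and target standard forms carry their order-$2$ pole at $P_\infty$ and their order-$1$ pole at $P_0$, and since these orders differ, $\varphi_{\lambda,M}$ cannot interchange the two poles: it must fix $P_\infty$ and $P_0$ separately. By Lemma~\ref{lem:pole_images} this means $M^{-1}(\infty) = \infty$ and $M^{-1}(0) = 0$, which force $\gamma = 0$ and then $\beta = 0$, respectively. After the harmless normalization $\delta = 1$, I obtain $M(x) = \alpha x$.

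Finally I would compute $h_M(x)$. Substituting $M(x) = \alpha x$ gives $f(M(x)) = \alpha^2 x^2 + a\alpha x + b/(\alpha x)$, whose exponents $2, 1, -1$ are none of them divisible by $p = 3$. By Lemma~\ref{lem:uniqueh}, the unique admissible $h_M$ is the one cancelling all terms with $p$-divisible exponents, so $h_M(x) = 0$ and thus $\varphi_{\lambda,M}(x,y) = (\alpha x, \lambda y)$, as claimed. If one additionally tracks the monicity of the image in $x^2$, comparing its leading coefficient $\alpha^2/\lambda$ to $1$ yields the relation $\alpha^2 = \lambda$ (the analogue of $\alpha^4 = \lambda$ in Proposition~\ref{prop:g3isomorphism_ex1}), which is exactly what feeds the subsequent invariant computation. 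The only genuinely delicate point is the pole bookkeeping in the second step, namely ensuring that the distinct pole orders rule out a swap and translate correctly into $\beta = \gamma = 0$; the remaining steps are mechanical.
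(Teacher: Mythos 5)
Your proposal is correct and follows essentially the same route as the paper: rule out the exceptional curves, invoke Corollary~\ref{cor:generalASisomorphism}, use preservation of the poles $P_\infty$ and $P_0$ (with their distinct orders) to force $\gamma=\beta=0$ and $h_M=0$, and extract $\alpha^2=\lambda$ from monicity. Your version merely spells out two points the paper leaves implicit, namely the genus/pole-count reasons why no curve in the family is exceptional and why the two poles cannot be interchanged.
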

\begin{proof}
No curve in this family is exceptional, so we can directly use Corollary~\ref{cor:generalASisomorphism} to see that, disregaring composition with powers of $\sigma$, every isomorphism is of the form $\varphi_{\lambda,M}$ as in~\eqref{eq:standardASisom} with $M=\begin{pmatrix}
\alpha & \beta \\
\gamma & \delta
\end{pmatrix}\in \GL_2(\overline{\F}_3)$. 
In this particular case, each isomorphism must fix~$\infty$ and~$0$, so $\gamma=\beta=0$, and we can choose $\delta$ to be equal to 1.  This implies that $h(x)=0$, so the only isomorphisms of $C$ as in~\eqref{eqn:PolesOfOrder1And2} are of the form $(x,y)\mapsto (\alpha x, \lambda y)$.  Applying the isomorphism to $C$ and using the requirement that the right hand side of a standard form must be monic yields $\alpha^2=\lambda$, so there are four possible values for $\alpha$ since $\lambda\in\F_3^{\times}$. This produces the model
\begin{equation}\label{eq:IsoClassp3g3b}
    \widetilde{C}\colon y^3-y=x^2+a'x+\frac{b'}{x},
\end{equation}
where $a'=a/\alpha$ and $b'=b/\alpha^3$. Since $a$ and $b$ can take on any values in $\overline\F_3$, we obtain a two-dimensional family of curves from the isomorphism in~\eqref{eq:isomorphiamp3g3b}. Since the stratum is two-dimensional, this isomorphism yields all curves with the prescribed poles and orders.
\end{proof}

These isomorphisms give a group action of the finite group $G\simeq \Z/4\Z$ acting linearly on $\overline{\F}_3[a,b]$.  The action is generated by $(a,b)\mapsto(ia,-ib)$, where $i^2=-1$.

\begin{corollary}
The elements $I_1= a^4, I_2= ab,$ and $I_3= b^4$ generate the ring of invariants for Artin-Schreier curves of genus $3$in characteristic $3$ with $3$-rank $2$, i.e.\ $\overline{\F}_3[a,b]^G=\overline{\F}_3[I_1,I_2,I_3]$. These elements satisfy the algebraic relation $I_1I_3-I_2^4=0$.
\end{corollary}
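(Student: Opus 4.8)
The strategy mirrors that of the preceding corollary: compute the invariants directly, verify that they form a reconstructing system, and invoke Corollary~\ref{cor:reconstruct}; the only new wrinkle is that the three invariants are no longer algebraically independent. I would begin by recording the action explicitly. A generator of $G \simeq \Z/4\Z$ sends $(a,b) \mapsto (ia,-ib)$ with $i^2 = -1$, so on a monomial it acts by
\[ a^m b^n \longmapsto i^m(-i)^n a^m b^n = i^{m+3n} a^m b^n, \]
using $-1 = i^2$. Since the action is diagonal, $\overline{\F}_3[a,b]^G$ is spanned by invariant monomials, and $a^m b^n$ is invariant exactly when $m + 3n \equiv 0 \pmod 4$, i.e.\ when $m \equiv n \pmod 4$. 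In particular $I_1 = a^4$, $I_2 = ab$, and $I_3 = b^4$ are invariants, and the relation $I_1 I_3 = a^4 b^4 = (ab)^4 = I_2^4$ is immediate.

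Next I would show that $\{I_1,I_2,I_3\}$ is a reconstructing system. Given the values $(I_1,I_2,I_3)$ attached to a curve in the family~\eqref{eqn:PolesOfOrder1And2}, where necessarily $b \ne 0$, I recover $(a,b)$ up to the action of $G$: if $I_2 = ab \ne 0$, then choosing any fourth root $a$ of $I_1$ forces $b = I_2/a$, and the four possible roots are exchanged precisely by $(a,b)\mapsto(ia,-ib)$ since $I_2/(ia) = -i\,(I_2/a)$; if instead $I_2 = 0$, then $a = 0$ and $b$ runs over the four fourth roots of $I_3$, which again form a single $G$-orbit. Hence the invariant values determine the standard-form curve up to isomorphism, and Corollary~\ref{cor:reconstruct} yields $\overline{\F}_3[a,b]^G = \widehat{\widetilde{A}}$ with $A = \overline{\F}_3[I_1,I_2,I_3]$.

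It then remains to identify $\widehat{\widetilde{A}}$ with $A$. The most transparent way is to use the monomial criterion directly: if $m \equiv n \pmod 4$ then $a^m b^n = I_1^{(m-n)/4} I_2^{\,n}$ when $m \ge n$ and $a^m b^n = I_2^{\,m} I_3^{(n-m)/4}$ when $n \ge m$, so in fact $\overline{\F}_3[a,b]^G = A$ outright and a fortiori $\widehat{\widetilde{A}} = A$. Equivalently, the exponent monoid $\{(m,n)\in\Z_{\ge0}^2 : m \equiv n \pmod 4\}$ is saturated with Hilbert basis $\{(4,0),(1,1),(0,4)\}$, which makes $A$ normal and purely inseparably closed in $\overline{\F}_3[a,b]$. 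The single additive relation $(4,0)+(0,4) = 4\cdot(1,1)$ among the generators produces the stated relation $I_1 I_3 - I_2^4 = 0$, and a dimension count (the hypersurface $I_1 I_3 = I_2^4$ has dimension $2$, matching the transcendence degree of the three generators) confirms this is the only relation, so $A \cong \overline{\F}_3[X,Y,Z]/(XZ - Y^4)$.

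Every step is elementary; the only point needing genuine care is the last, namely confirming that no invariants beyond products of $I_1, I_2, I_3$ are required. In the direct argument this is the saturation check, and in the reconstruction framework it is the equality $\widehat{\widetilde{A}} = A$. Both reduce to the single observation that $3$ is a unit modulo $4$, so passing to a $p$-power cannot move a monomial into a new residue class of exponents; this is exactly what prevents the purely inseparable closure from enlarging $A$, and it is the feature that distinguishes the present (mildly) non-free case from the previous corollary.
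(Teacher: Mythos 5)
Your proposal is correct, and its core skeleton --- check invariance of $I_1,I_2,I_3$, verify that they form a reconstructing system, and invoke Corollary~\ref{cor:reconstruct} --- is exactly the paper's argument. Where you genuinely add something is in the final step: the paper simply asserts that the reconstruction property together with Corollary~\ref{cor:reconstruct} gives $\overline{\F}_3[a,b]^G=\overline{\F}_3[I_1,I_2,I_3]$, leaving the identification of the purely inseparable closure of the normalization with $A$ itself implicit, whereas you close this gap with a direct computation. Since the generator of $G\simeq\Z/4\Z$ acts diagonally, the invariant ring is spanned by the monomials $a^mb^n$ with $m\equiv n\pmod 4$, and your explicit factorizations $a^mb^n=I_1^{(m-n)/4}I_2^{\,n}$ (for $m\ge n$) and $a^mb^n=I_2^{\,m}I_3^{(n-m)/4}$ (for $n\ge m$) prove $\overline{\F}_3[a,b]^G=A$ outright, making the appeal to Corollary~\ref{cor:reconstruct} strictly speaking unnecessary. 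Your saturation/Hilbert-basis remark and the dimension count showing $A\cong\overline{\F}_3[X,Y,Z]/(XZ-Y^4)$ also establish that $I_1I_3-I_2^4=0$ is the \emph{only} relation, a point the paper states but does not justify. The trade-off is that your direct argument is specific to diagonal actions, while the paper's reconstruction framework is the one that generalizes to the later, non-abelian cases (e.g.\ the $D_4$ and $D_6$ actions in Section~\ref{sec:DetailedExample}); both are worth keeping in mind.
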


\begin{proof}
Let $a'$ and $b'$ denote the coefficients of $1/x$ and $x$ in~\eqref{eq:IsoClassp3g3b}, respectively. Then $(a')^4=(a/\alpha)^4=a^4$ and $(b')^4=(b/\alpha^3)^4=b^4$ for all $\alpha\in\overline{\F}_3$. Hence $I_1$ and $I_3$ are invariants. Furthermore, $a'b'=ab/\alpha^4=ab,$ so $I_2$ is also an invariant.  On the other hand, given values $\{I_1,I_2,I_3\}$ and choosing $a,b\in\overline{\F}_3$ with $a^4=I_1$, $b^4=I_3$ and $ab=I_2$, we can reconstruct a unique standard form model as in~\eqref{eqn:PolesOfOrder1And2}.  Different choices of $a$ and $b$ yield isomorphic models.  By Corollary~\ref{cor:reconstruct} this implies that $\overline{\F}_3[a,b]^G=\overline{\F}_3[I_1,I_2,I_3]$. 

\end{proof}

\begin{remark} Given a more general model $y^3-y=\frac{ax+b}{x-\lambda_1}+\frac{cx^2+dx+e}{(x-\lambda_2)^2}$, the set
$$\left\{\frac{(c\lambda_1\lambda_2-d(\lambda_1+\lambda_2)+e)^4}{(c\lambda_2^2+d\lambda_2+e)^2}, \frac{(a\lambda_1+b)^4(c\lambda_2^2+d\lambda_2+e)^2}{(\lambda_1-\lambda_2)^8}, \frac{(c\lambda_1\lambda_2-d(\lambda_1+\lambda_2)+e)(a\lambda_1+b)}{(\lambda_1-\lambda_2)^2}\right\}$$
is a reconstructing set of invariants.
\end{remark}

\subsection{Genus 3, characteristic 7}
There is only one component in this stratum since the only partition is $\{3\}$.  In this case, there is only one pole of order 2 and a unique standard form:
$$y^7-y=x^2.$$
Note that this is one of the exceptional curves in case~\eqref{case:exceptionalCurve2} of Corollary~\ref{cor:generalASisomorphism}.

\subsection{Genus 4, characteristic 3}
This case has three strata, consisting of curves with respective $3$-ranks 0, 2, and 4.

\subsubsection{A single pole of order 5} \label{SS:one-pole-order-5}
In this case the $3$-rank is $s=0$ by~\eqref{eq:ASprank}, and we have $\vec{E}=\{6\}$. This corresponds to a curve $C_f$ where $f(x)$ has one pole of order 5. The standard form as given in Theorem~\ref{T:normal} is:
\begin{equation}\label{eqn:p=3,g=4,E=6}
C: y^3-y=x^5+cx^4+dx^2   
\end{equation}
for some $c,d\in \overline{\F}_5$. Note that none of the exceptional curves have a standard form of this type, so Corollary~\ref{cor:generalASisomorphism} applies to this stratum.

Identifying isomorphisms between standard forms in this case is cumbersome.  However, it is fairly straightforward to compute invariants from a different model of the curve.  We first apply isomorphisms to reach a different distinguished form, compute invariants using this model, and then express these invariants in terms of the coefficients in our standard form.  

\begin{lemma}\label{lem:reduced_form}
    Every curve with standard form $C$ given in~\eqref{eqn:p=3,g=4,E=6} is isomorphic to one of the form
\begin{equation}\label{eqn:AltForm1}
C_{g}: y^3-y=x^5+ax^2+bx   
\end{equation}
for some $a,b\in\overline{\F}_3$.  Specifically, $a=c^3+d$ and $b=(-cd-\epsilon^2)$, where $\epsilon^3=c$.
\end{lemma}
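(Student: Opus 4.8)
The plan is to exhibit an explicit isomorphism of the form $(x,y) \mapsto (x, y + h(x))$ that transforms the standard form $C$ in~\eqref{eqn:p=3,g=4,E=6} into the target form $C_g$ in~\eqref{eqn:AltForm1}. Since we work in characteristic $3$ and the curve has a single pole at infinity with leading term $x^5$, the natural move is to absorb the $x^4$ and part of the $x^2$ terms by a translation of $y$ combined with, if needed, a translation of $x$. Concretely, recall from the proof of Theorem~\ref{T:normal} and Lemma~\ref{a1=0} that an isomorphism $(x,y)\mapsto(x,y+h(x))$ changes the right-hand side $f(x)$ by $-h(x)^3 + h(x)$. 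The term $cx^4$ cannot be removed by such a $y$-translation alone (its exponent is not divisible by $3$), so first I would introduce a translation $x \mapsto x - \epsilon$ with $\epsilon^3 = c$ to be determined, chosen precisely so that the $x^4$ coefficient gets converted into a $3$-divisible monomial which can then be cleared by a subsequent $y$-shift.

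The key steps, in order, are as follows. First, I would substitute $x \mapsto x + \epsilon$ (or $x-\epsilon$, sorting out the sign) into $x^5 + cx^4 + dx^2$ and expand using the binomial theorem, keeping in mind that in characteristic $3$ many binomial coefficients vanish; in particular $\binom{5}{3}=10\equiv 1$ and $\binom{5}{1}=5\equiv 2$, while $\binom{4}{i}$ simplifies greatly mod $3$. Second, I would choose a $y$-translation $h(x) = \lambda x^?$ — namely the monomial whose cube cancels the resulting $x^3$-type term, using $h(x)^3$ versus $h(x)$. Setting $\epsilon^3 = c$ ensures the leftover $x^4$-contributions reorganize into an $x^3$ (or constant) term removable by this $y$-shift, since $(cx)^{\phantom{3}}$ maneuvers through the Frobenius. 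Third, after these two moves the curve should read $y^3 - y = x^5 + ax^2 + bx + (\text{lower/$3$-divisible terms})$, and a final $y$-translation plus absorbing the constant (as in the end of the proof of Theorem~\ref{T:normal}) cleans up any remaining $3$-divisible or constant terms. Fourth, I would read off $a$ and $b$ by collecting the coefficients of $x^2$ and $x^1$ respectively, which should yield $a = c^3 + d$ and $b = -cd - \epsilon^2$ after simplification in characteristic $3$.

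The main obstacle I anticipate is bookkeeping the characteristic-$3$ binomial expansions correctly and tracking how the single $y$-shift interacts with the $x$-translation: the Frobenius term $h(x)^3$ raises the degree, so the elimination must be organized carefully (iteratively, as in Lemma~\ref{a1=0}) rather than in one naive step. In particular, confirming that the coefficient of $x^2$ becomes exactly $c^3 + d$ requires the $x$-translation to feed a $c^3 x^2$ contribution, which comes from the Frobenius image $(\epsilon x)$-type term with $\epsilon^3 = c$; verifying the linear coefficient equals $-cd - \epsilon^2$ is the most delicate sign-and-coefficient computation. I expect this to be entirely routine but error-prone, so the cleanest exposition is to state the substitution $(x,y) \mapsto (x - \epsilon,\, y + h(x))$ for an explicit $h$, then verify directly that the right-hand side equals $x^5 + ax^2 + bx$ with the claimed $a,b$, leaving the coefficient comparison as a short check. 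Because we merely need existence of \emph{some} $a,b$ together with the explicit formulas, there is no uniqueness to establish here, which simplifies the argument.
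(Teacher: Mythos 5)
Your overall strategy is the same as the paper's: first translate $x$ to kill the $x^4$ term, then apply a $y$-shift $y\mapsto y+h(x)$ with $h$ linear, using the fact that such a shift changes the right-hand side by $h(x)-h(x)^3$, to remove the resulting $x^3$ term and the constant. However, as written your first step would fail: you propose translating by $\epsilon$ with $\epsilon^3=c$, but under $x\mapsto x+\beta$ the coefficient of $x^4$ in $x^5+cx^4+dx^2$ becomes $5\beta+c=2\beta+c$, which vanishes only for $\beta=c$ (since $2=-1$ in characteristic $3$), not for $\beta=\pm\epsilon$. The cube root $\epsilon$ plays no role in the $x$-translation. It enters only afterwards: the substitution $x\mapsto x+c$ produces
\[
y^3-y=x^5-c^2x^3+(c^3+d)x^2-cdx+(-c^5+c^2d),
\]
and to cancel the $-c^2x^3$ term one needs $h(x)=h_1x+h_2$ with $h_1^3=-c^2$, i.e.\ $h_1=-\epsilon^2$; the $y$-shift then also contributes $+h_1x=-\epsilon^2x$ to the linear term, which is where the $-\epsilon^2$ in $b=-cd-\epsilon^2$ comes from, while $h_2$ is chosen with $h_2^3-h_2=-c^5+c^2d$ to clear the constant.

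Relatedly, your explanation that the $c^3x^2$ contribution ``comes from the Frobenius image of an $(\epsilon x)$-type term'' is not right: a linear $h$ contributes $h_1^3x^3$ under Frobenius, never an $x^2$ term. The $c^3x^2$ arises directly from the binomial coefficient $\binom{5}{2}c^3\equiv c^3 \pmod 3$ in the expansion of $(x+c)^5$. Once the translation parameter is corrected to $c$ and the roles of $c$ versus $\epsilon$ are sorted out, the rest of your plan (direct expansion in characteristic $3$ and reading off the coefficients of $x^2$ and $x$) goes through exactly as in the paper and yields $a=c^3+d$, $b=-cd-\epsilon^2$.
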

\begin{proof}
    We apply the isomorphism $\varphi_{\lambda,M}$ from~\eqref{eq:standardASisom} to $C$, where $\lambda=1$ and 
    $M=\begin{pmatrix} 1 & c \\ 0 & 1  \end{pmatrix}$, to obtain
    $$y^3-y=x^5-c^2x^3+(c^3+d)x^2-cdx+(-c^5+c^2d)=:\tilde{f}(x).$$ Next, we determine a polynomial $h(x)=h_1x+h_2 \in\overline{\F}_3[x]$ such that $h(x)^3-h(x)$ has the same $x^3$-term and constant term as $\tilde{f}$.  Let $\epsilon\in\overline{\F}_3$ with $\epsilon^3=c$.  
    Comparing coefficients at $x^3$ yields 
    $h_1^3=-c^2$, so $h_1=-\epsilon^2$. Comparing constant coefficients defines $h_2$ via the relation $h_2^3-h_2=-c^5+c^2d$.  Then $C_f$ is isomorphic to 
    \begin{align*}C_g:y^3-y&= x^5-c^2x^3+(c^3+d)x^2-cdx+(-c^5+c^2d)-\left((h_1x+h_2)^3-(h_1x+h_2)\right)\\
    &=x^5+(c^3+d)x^2+(-cd-\epsilon^2)x.
    \end{align*}
\end{proof}

\begin{proposition}\label{prop:siglePoleOrder5}
    Every isomorphism between curves defined as in~\eqref{eqn:AltForm1} is given, up to composition with powers of $\sigma:\,(x,y)\mapsto(x,y+1)$, by \[(x,y)\mapsto(\alpha x,\alpha^5 y)\] for some $\alpha\in\overline{\F}_3$ with $\alpha^{10}=1$.
\end{proposition}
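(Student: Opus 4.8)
The plan is to mirror the proofs of Propositions~\ref{prop:g3isomorphism_ex1} and~\ref{prop:g3isomorphism_ex2}. As noted before Lemma~\ref{lem:reduced_form}, a curve of the form~\eqref{eqn:AltForm1} is isomorphic to a standard form curve~\eqref{eqn:p=3,g=4,E=6} and is not of exceptional type, so Corollary~\ref{cor:generalASisomorphism} applies: any isomorphism between two curves~\eqref{eqn:AltForm1} must be of the form $\varphi_{\lambda,M}$ as in~\eqref{eq:standardASisom}, with $M=\begin{pmatrix}\alpha&\beta\\\gamma&\delta\end{pmatrix}\in\GL_2(\overline{\F}_3)$ and $\lambda\in\F_3^{\times}$. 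The strategy is then to pin down $M$ from the pole structure, substitute into the defining relation, and read off the constraints by comparing coefficients.

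First I would constrain $M$. A curve~\eqref{eqn:AltForm1} has a single pole, at infinity, of order $5$; since source and target both have their unique pole at $P_\infty$, the map $\varphi_{\lambda,M}$ must fix $P_\infty$. By Lemma~\ref{lem:pole_images} this forces $\gamma=0$, and since $M$ only matters up to scaling I may take $\delta=1$, so that $M(x)=\alpha x+\beta$ with $\alpha\in\overline{\F}_3^{\times}$. Writing the target as $y^3-y=x^5+\tilde a x^2+\tilde b x$ and pulling this relation back along $\varphi_{\lambda,M}$ turns the requirement that $\varphi_{\lambda,M}$ be an isomorphism into the polynomial identity
\[
\lambda\bigl(x^5+ax^2+bx\bigr)+h(x)^3-h(x)=(\alpha x+\beta)^5+\tilde a(\alpha x+\beta)^2+\tilde b(\alpha x+\beta),
\]
where I have used $\lambda^3=\lambda$ in $\F_3$. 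Expanding the right-hand side in characteristic $3$ (with $\binom{5}{4}\equiv 2$ and $\binom{5}{3}\equiv 1\pmod 3$) and comparing coefficients in descending degree, the $x^5$ term yields $\lambda=\alpha^5$ and the $x^4$ term yields $2\alpha^4\beta=0$, hence $\beta=0$ because $\alpha\neq 0$.

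Once $\beta=0$, the right-hand side is $\alpha^5x^5+\tilde a\alpha^2x^2+\tilde b\alpha x$, which carries no monomial of exponent divisible by $3$; by the uniqueness in Lemma~\ref{lem:uniqueh} the cancelling polynomial is therefore $h(x)=0$, and the remaining matches merely record $\tilde a=\lambda a/\alpha^2$ and $\tilde b=\lambda b/\alpha$. Finally $\lambda=\alpha^5$ together with $\lambda\in\F_3^{\times}=\{\pm1\}$ gives $\alpha^{10}=1$, so $\varphi_{\lambda,M}$ is exactly $(x,y)\mapsto(\alpha x,\alpha^5 y)$ with $\alpha^{10}=1$, as claimed. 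The one point demanding care is the bookkeeping for $h(x)$: a priori $h$ could contain a linear term $h_1x$ contributing $h_1^3x^3$ and a constant $h_0$ contributing $h_0^3-h_0$, so one must check that the vanishing of the $x^3$ coefficient forces $h_1=0$, while the remaining constant freedom $h_0\in\F_3$ is only the Artin--Schreier deck translation $y\mapsto y+h_0$ that Lemma~\ref{lem:uniqueh} normalizes away to isolate $h=0$. This is the main obstacle, and it is precisely what that lemma is designed to handle.
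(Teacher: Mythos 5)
Your proposal is correct and follows essentially the same route as the paper's proof: fix $P_\infty$ to get $\gamma=0$ and normalize $\delta=1$, kill $\beta$ via the vanishing $x^4$ coefficient, deduce $h=0$, and obtain $\lambda=\alpha^5$ from monicity together with $\lambda^2=1$ to conclude $\alpha^{10}=1$. Your explicit coefficient comparison (including the $x^3$ term forcing $h_1=0$ and the residual $h_0\in\F_3$ deck translation) is just a more detailed writing-out of the same argument.
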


\begin{proof}
    Let $M\colonequals\begin{pmatrix}
\alpha & \beta \\
\gamma & \delta
\end{pmatrix}\in \GL_2(\overline{\F}_3)$. 
Any isomorphism $\varphi_{\lambda,M}$ between curves of the form~\eqref{eqn:AltForm1} must preserve the pole at infinity, implying $\gamma=0$.  Further,  the requirement of a vanishing $x^4$ term forces $\beta=0$, and thus $h(x)=0$. The fact that the resulting form must be monic in $x$ necessitates $\lambda^{-1}\alpha^5/\delta^5=1$.  Without loss of generality, assume that $\delta=1$, so $\lambda=\alpha^5$.  Since $\lambda^2=1$, we must have $\alpha^{10}=1$.
\end{proof}

These isomorphisms yield a group action of the finite group $G\simeq \Z/5\Z$ acting linearly on $\overline{\F}_3[a,b]$, generated by $(a,b)\mapsto(a/\alpha^3, b/\alpha^4)$ with $\alpha^{10}=1$.

\begin{corollary}\label{cor:reduced_form_invars1}
    The elements $I_1= a^{10}$, $I_2= b^5$, and $I_3= a^2b$ generate the ring of invariants for Artin-Schreier curves of genus $4$ in characteristic $3$ with $3$-rank $0$, 
i.e.\ $\overline{\F}_3[a,b]^G=\overline{\F}_3[I_1,I_2,I_3]$. These elements satisfy the relation $I_1I_2=I_3^5$.
\end{corollary}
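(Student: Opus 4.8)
The plan is to mirror the structure of the two preceding corollaries, invoking Corollary~\ref{cor:reconstruct} once we verify that $\{I_1,I_2,I_3\}$ is a reconstructing set. The group action here is $G \simeq \Z/5\Z$ generated by $(a,b) \mapsto (a/\alpha^3, b/\alpha^4)$ with $\alpha^{10}=1$; since $\lambda = \alpha^5 \in \F_3^\times$ gives $\alpha^{10}=1$, the effective action on the free coefficients factors through the quotient killing the $\lambda$-part, leaving a genuine $\Z/5\Z$ acting via a primitive $5$th root of unity $\zeta = \alpha^2$. First I would record that under the generator, $a \mapsto \zeta^{-\!3}a$ and $b \mapsto \zeta^{-\!4}b$ (reducing exponents mod $5$), and then check directly that each proposed generator is fixed: $a^{10} \mapsto \zeta^{-30}a^{10}=a^{10}$, $b^5 \mapsto \zeta^{-20}b^5=b^5$, and $a^2 b \mapsto \zeta^{-6-4}a^2 b=\zeta^{-10}a^2b=a^2b$. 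This establishes $I_1,I_2,I_3 \in \overline{\F}_3[a,b]^G$.

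Next I would verify the reconstruction property, which is the crux of the argument. Given values $(I_1,I_2,I_3)$ arising from some curve, I want to recover a pair $(a,b)$, unique up to the $G$-action, producing an isomorphic standard form as in~\eqref{eqn:AltForm1}. The generic case is $a,b \neq 0$: here I choose any $a$ with $a^{10}=I_1$, which determines $a$ up to a $10$th root of unity; the relation $I_3 = a^2 b$ then forces $b = I_3/a^2$, and I must check this $b$ is consistent with $I_2 = b^5$, which is exactly where the syzygy $I_1 I_2 = I_3^5$ enters — squaring $I_3 = a^2b$ five times and substituting shows $I_3^5 = a^{10}b^5 = I_1 I_2$, so consistency is automatic. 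Any two valid pairs $(a,b)$ and $(a',b')$ differ by $a' = a/\alpha^3$, $b'=b/\alpha^4$ for a suitable $\alpha$ with $\alpha^{10}=1$, hence lie in the same $G$-orbit and give isomorphic curves by Proposition~\ref{prop:siglePoleOrder5}. The degenerate cases $a=0$ or $b=0$ must be handled separately but are easier: if $a=0$ then $I_1=I_3=0$ and $b$ is recovered up to a $5$th root of unity from $I_2=b^5$; if $b=0$ then $I_2=I_3=0$ and $a$ is recovered up to a $10$th root of unity from $I_1=a^{10}$.

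I expect the main obstacle to be a clean treatment of these boundary strata, ensuring that the reconstruction genuinely yields a curve in the same $G$-orbit rather than merely a curve with matching invariant values; the syzygy $I_1 I_2 = I_3^5$ is what guarantees that an arbitrary triple satisfying it actually comes from a consistent $(a,b)$, so I would state and prove this relation first and lean on it throughout. Once reconstruction is established in all cases, Corollary~\ref{cor:reconstruct} immediately gives $\overline{\F}_3[a,b]^G = \widehat{\widetilde{\overline{\F}_3[I_1,I_2,I_3]}} = \overline{\F}_3[I_1,I_2,I_3]$, the final equality following because the invariant ring of a cyclic group acting on a polynomial ring in the non-modular case ($\gcd(5,3)=1$) is already normal and purely inseparably closed. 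Verifying the syzygy $I_1 I_2 = I_3^5$ is the short computation $a^{10}\cdot b^5 = (a^2 b)^5$, which I would present explicitly to close the proof.
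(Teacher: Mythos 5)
Your proposal is correct and follows essentially the same route as the paper: verify that $I_1,I_2,I_3$ are fixed by the action, show they form a reconstructing set (with the syzygy $I_1I_2=I_3^5$ guaranteeing consistency of the recovered pair $(a,b)$ up to the action of Proposition~\ref{prop:siglePoleOrder5}), and conclude via Corollary~\ref{cor:reconstruct}; your reconstruction step is in fact slightly more careful than the paper's, which simply asserts that compatible roots can be chosen. One harmless slip: the action does not factor through a $\Z/5\Z$ quotient via $\zeta=\alpha^2$ (e.g.\ $\alpha=-1$ acts nontrivially by $a\mapsto -a$ yet $\zeta=1$), but since $a^{10}$, $b^5$, $a^2b$ are invariant under the full action $(a,b)\mapsto(\alpha^{-3}a,\alpha^{-4}b)$ for every $\alpha^{10}=1$, and your reconstruction argument uses that full action, nothing in the proof breaks.
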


\begin{proof}
For a curve $C_g$ as in~\eqref{eqn:AltForm1}, we apply an isomorphism as given in Proposition~\ref{prop:siglePoleOrder5} to obtain
\[C_{\tilde{g}}:y^3-y=x^5+\frac{a}{\alpha^3}x^2+\frac{b}{\alpha^4}x.\]
It is easy to check that the given functions $I_1, I_2, I_3$. are invariant under this action.  Conversely, given values $\{I_1,I_2,I_3\}$ with $I_1I_2=I_3^5$, we can choose $a,b\in\overline{\F}_3$ such that $a^{10}=I_1,$ $b^5=I_2$, in which case $a^2b=I_3$. Suppose $I_1 = \tilde{a}^{10}$ and $I_2 = \tilde{b}^5$ with $\tilde{a},\tilde{b}\in\overline{\F}_3$, and let $\alpha\in\overline{\F}_3$ be such that $\alpha^3=\tilde{a}/a$. The relation $a^{10}=\tilde{a}^{10}$ implies that $\alpha^{10}=1$, so $\alpha^4=\tilde{a}^2/a^2$. Since $a^2b=\tilde{a}^2\tilde{b}$, we have $b/\alpha^4=\tilde{b}$. Hence, the curves $y^3-y=x^5+ax^2+bx$ and $y^3-y=x^5+\tilde{a}x^2+\tilde{b}x$ are isomorphic via the isomorphism in Proposition~\ref{prop:siglePoleOrder5} given by $\alpha$ with $\alpha^3=\tilde{a}/a$.
Corollary~\ref{cor:reconstruct} now implies that $I_1,I_2,$ and $I_3$ generate the invariant ring.
\end{proof}
By applying the change of variables from Lemma~\ref{lem:reduced_form}, we obtain the following.
\begin{corollary} Two curves in standard form as in~\eqref{eqn:p=3,g=4,E=6} are isomorphic if and only if they have the same invariants $\{I_1=(c^3+d)^{10}, I_2=(-cd-\epsilon^2)^5, I_3=(c^3+d)^2(-cd-\epsilon^2)\}$, where $\epsilon^3=c$.
\end{corollary}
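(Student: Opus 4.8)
The plan is to deduce this statement directly from the two preceding results by transporting the reconstruction property along the isomorphism constructed in Lemma~\ref{lem:reduced_form}. First I would record that isomorphism of Artin-Schreier curves is an equivalence relation, so two curves in standard form~\eqref{eqn:p=3,g=4,E=6} with parameters $(c,d)$ and $(c',d')$ are isomorphic if and only if the corresponding alternate-form curves~\eqref{eqn:AltForm1} produced by Lemma~\ref{lem:reduced_form} are isomorphic. Writing $a=c^3+d$, $b=-cd-\epsilon^2$ and $a'=(c')^3+d'$, $b'=-c'd'-(\epsilon')^2$ for these alternate-form parameters, Corollary~\ref{cor:reduced_form_invars1} states precisely that the two alternate forms are isomorphic if and only if $(a^{10},b^5,a^2b)=((a')^{10},(b')^5,(a')^2b')$. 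Substituting the expressions for $a$ and $b$ in terms of $c,d,\epsilon$ then yields the three invariants $I_1=(c^3+d)^{10}$, $I_2=(-cd-\epsilon^2)^5$, and $I_3=(c^3+d)^2(-cd-\epsilon^2)$, which gives the claim.

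The one point that genuinely requires care, and which I would flag explicitly, is the well-definedness of these invariants as functions of the standard-form parameters $(c,d)$. The quantity $\epsilon$ is only specified by the condition $\epsilon^3=c$, so a priori $I_2$ and $I_3$ could depend on the choice of cube root. Here I would invoke that we work over $\overline{\F}_3$, where the Frobenius $x\mapsto x^3$ is a bijection; hence $c$ admits a \emph{unique} cube root $\epsilon$, and the invariants are unambiguous. This is exactly the $\epsilon$ appearing in the proof of Lemma~\ref{lem:reduced_form}, so no new choice is introduced.

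Since the remaining content is pure substitution, I do not expect a real obstacle: the heavy lifting, namely identifying the isomorphisms of the alternate form, checking invariance, and establishing the reconstruction property, has already been carried out in Proposition~\ref{prop:siglePoleOrder5} and Corollary~\ref{cor:reduced_form_invars1}. The only thing worth double-checking is that the chain of isomorphisms composes correctly, that is, that $C\cong C'$ really is equivalent to the alternate forms being isomorphic rather than to some weaker relation; but this is immediate from transitivity together with the fact that Lemma~\ref{lem:reduced_form} provides honest curve isomorphisms.
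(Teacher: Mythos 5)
Your proposal is correct and matches the paper's argument: the paper likewise obtains this corollary immediately from Lemma~\ref{lem:reduced_form} and Corollary~\ref{cor:reduced_form_invars1} by substituting $a=c^3+d$ and $b=-cd-\epsilon^2$. Your explicit remark that $\epsilon$ is the unique cube root of $c$ in $\overline{\F}_3$ (since Frobenius is bijective) is a worthwhile clarification the paper leaves implicit.
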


\subsubsection{Two poles of order 2} This case corresponds to the partition $\vec{E}=\{3,3\}$.  According to Theorem~\ref{T:normal}, the curves in this stratum have standard form
\begin{equation}\label{eqn:p=3,g=4,E=3,3}
C\colon y^3-y=x^2+ax+\frac{b}{x}+\frac{c}{x^2}
\end{equation}
for $a,b,c\in\overline{\F}_3$ with $c\neq 0$.

\begin{proposition}\label{prop:g4p3poles22}
 Every isomorphism between curves in standard form as in~\eqref{eqn:p=3,g=4,E=3,3} is given, up to composition with powers of $\sigma:\,(x,y)\mapsto(x,y+1)$, by
\begin{equation}\label{eq:isomorphiamp3g3p2}
    (x,y)\mapsto (\alpha x, \alpha^3 y),
\end{equation}
where $\alpha^4=1$, or by
\begin{equation}\label{eq:isomorphiamp3g3b2}
    (x,y)\mapsto \left( \frac{1}{\gamma x}, \lambda y\right),
\end{equation}
where $\lambda\in \F_3^{\times}$ and $c\gamma^2=\lambda$.
\end{proposition}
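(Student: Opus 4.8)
The plan is to mirror the structure of the proofs of Propositions~\ref{prop:g3isomorphism_ex2} and~\ref{prop:siglePoleOrder5}, the essential new phenomenon being that the two poles of the standard form~\eqref{eqn:p=3,g=4,E=3,3} share the same order and may therefore be interchanged by an isomorphism. The first step is to confirm that Corollary~\ref{cor:generalASisomorphism} applies, i.e.\ that no curve in this family is exceptional. Since $\vec{E}=\{3,3\}$, the curve has genus $4$ and $3$-rank $2$. Comparing with Remark~\ref{rem:genusexcep}, the only exceptional curve of genus $4$ in characteristic $3$ is of type~\eqref{case:exceptionalCurve1}, namely $y^3-y=a/(x^3-x)$; but this curve has three poles (at $0$, $1$, and $-1$, each of order $1$) and hence $3$-rank $4$. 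Because genus and $p$-rank are isomorphism invariants, no member of the present family is birational to an exceptional curve, so every isomorphism has the form $\varphi_{\lambda,M}$ of~\eqref{eq:standardASisom}.

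Second, I would use the fact that such an isomorphism induces via $M$ an automorphism of the $x$-line that carries the poles of $C$ to the poles of its image while preserving their orders (as recorded in Section~\ref{subsec: isomorphisms}). As both poles $P_\infty$ and $P_0$ have order $2$, the map $M$ must send the set $\{P_\infty,P_0\}$ to itself, either fixing both points or swapping them. Using Lemma~\ref{lem:pole_images}, fixing $P_\infty$ means $M^{-1}(\infty)=\infty$, forcing $\gamma=0$, and fixing $P_0$ means $M^{-1}(0)=0$, forcing $\beta=0$; the swap $P_\infty\leftrightarrow P_0$ instead forces $\delta=0$ and $\alpha=0$. Thus $M$ is either diagonal or anti-diagonal, and after the usual scaling we may take $M(x)=\alpha x$ in the first subcase and $M(x)=1/(\gamma x)$ in the second. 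In either subcase the substitution permutes the exponent set $\{2,1,-1,-2\}$ of $f(x)$ among itself, so no $p$-divisible exponents are created and Lemma~\ref{lem:uniqueh} gives $h_M(x)=0$.

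Third, I would compute the image curve in each subcase and impose the standard-form normalization. For $M(x)=\alpha x$ the equation becomes
\[ y^3-y=\lambda^{-1}\bigl(\alpha^2x^2+a\alpha\,x+(b/\alpha)x^{-1}+(c/\alpha^2)x^{-2}\bigr), \]
so the monic condition on the $x^2$-coefficient reads $\lambda=\alpha^2$; since $\lambda\in\F_3^\times$ satisfies $\lambda^2=1$, this forces $\alpha^4=1$, recovering the isomorphisms~\eqref{eq:isomorphiamp3g3p2}. For $M(x)=1/(\gamma x)$ the equation becomes
\[ y^3-y=\lambda^{-1}\bigl(c\gamma^2x^2+b\gamma\,x+(a/\gamma)x^{-1}+(1/\gamma^2)x^{-2}\bigr), \]
and here monicity gives precisely $c\gamma^2=\lambda$, the constraint of~\eqref{eq:isomorphiamp3g3b2}; one checks that the right-hand side has no constant term and nonzero $x^{-2}$-coefficient, so the image is genuinely a standard form in the family. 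Conversely, a direct substitution shows each map in~\eqref{eq:isomorphiamp3g3p2} and~\eqref{eq:isomorphiamp3g3b2} sends a standard-form curve to another such curve, so these exhaust the isomorphisms.

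I expect the pole-swapping subcase to be the main obstacle. In Propositions~\ref{prop:g3isomorphism_ex2} and~\ref{prop:siglePoleOrder5} the distinct pole orders rigidify the pole set and immediately rule out any interchange, whereas here the equal orders genuinely allow the anti-diagonal transformation $x\mapsto 1/(\gamma x)$. One must therefore check with care that this $M$ still yields a bona fide standard form---that it produces no $p$-th power or constant terms and leaves the $x^{-2}$-coefficient nonzero---and that the normalization yields exactly the relation $c\gamma^2=\lambda$ and no further constraint tying together $a$, $b$, and $c$. The exceptional-curve exclusion in the first step, though quick, is also essential, since a genus-$4$ exceptional curve does exist in characteristic $3$ and must be separated from this family by its $p$-rank.
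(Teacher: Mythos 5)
Your proposal is correct and follows essentially the same route as the paper's proof: split into the pole-fixing and pole-swapping cases, take $M$ diagonal or anti-diagonal, and read off $\lambda=\alpha^2$ (hence $\alpha^4=1$) and $c\gamma^2=\lambda$ from monicity of the $x^2$-coefficient. Your explicit exclusion of exceptional curves via the $p$-rank and your verification that $h_M=0$ are details the paper leaves implicit, but they do not change the argument.
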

\begin{proof} Let $C$ be as in equation~\eqref{eqn:p=3,g=4,E=3,3}.  Any isomorphism of $C$ that yields another curve in standard form must either fix $P_{\infty}$ and $P_0$ or swap these poles.  We first consider the case when both poles are fixed.  In this situation, once again disregarding composition by powers of $\sigma$, the isomorphism is of the form $\varphi_{\lambda,M_{\alpha}}$ for  $M_{\alpha}=\begin{pmatrix}
    \alpha & 0\\ 0 & 1
\end{pmatrix},$ with $\lambda\in\F_3^{\times}$ and $\alpha\in\overline{\F}_3^{\times}$.  Applying $\varphi_{M_{\alpha},\lambda}$ to $C$ gives
$$y^3-y=\frac{\alpha^2x^2}{\lambda}+\frac{a\alpha x}{\lambda}+\frac{b}{\lambda\alpha x}+\frac{c}{\lambda\alpha^2 x^2}.$$  Since the image curve must be in standard form, we have $\alpha^2=\lambda,$ so $\alpha^4=1$. This isomorphism acts on the standard form model 
by $(a,b,c)\mapsto\left(\frac{1}{\alpha}a,\alpha b,c\right).$

In the second case, when the poles are switched, the isomorphism must be of the form $\varphi_{\lambda,M_{\gamma}}$ for  $M_{\gamma}=\begin{pmatrix}
    0 & 1\\ \gamma & 0
\end{pmatrix},$ with $\lambda\in\F_3^{\times}$ and $\gamma\in\overline{\F}_3^{\times}$. The image of $C$ under $\varphi_{M_{\gamma},\lambda}$ is  
$$y^3-y=\frac{c\gamma^2x^2}{\lambda}+\frac{b\gamma x}{\lambda}+\frac{a}{\lambda\gamma x}+\frac{1}{\lambda\gamma^2 x^2}$$ 
so $c\gamma^2=\lambda$. Since $\lambda^2=1$, the isomorphism acts on the coefficients of a curve in standard form given by equation \eqref{eqn:p=3,g=4,E=3,3} via $(a,b,c)\mapsto\left(\frac{\gamma b}{\lambda},\frac{\lambda a}{\gamma},c\right)$. 
\end{proof}

The isomorphisms in Proposition~\ref{prop:g4p3poles22} define a non-linear group action of the dihedral group $G\simeq D_4$ of order 8 on the function field $\overline{\F}_3(a,b,c)$. It cannot a priori be descended to the polynomial ring $\overline{\F}_3[a,b,c]$. We proceed as follows to obtain invariants in this case.
With the notation in Equation (\ref{eq:isomorphiamp3g3b2}) in Proposition~\ref{prop:g4p3poles22}, we have $\left(\frac{\lambda}{\gamma}\right)^2=\lambda c=\pm c$. This defines a linear action of the dihedral group $ D_4$ of order 8  on the polynomial ring $\overline{\F}_3(\sqrt{c})[a,b]$ over the field $\overline{\F}_3(\sqrt{c})$. It is generated by $(a,b)\mapsto(ia,-ib)$ where $i^2=-1$ and $(a,b)\mapsto\left(\frac{b}{\sqrt{c}},\sqrt{c}a\right)$.  

\begin{lemma} The elements $I_2= ab$ and $I_3= a^4c^2-b^4$ generate $\overline{\F}_3(\sqrt{c})[a,b]^{G}$.
\end{lemma}

\begin{corollary} Two curves in standard form as in equation~\eqref{eqn:p=3,g=4,E=3,3} are isomorphic if and only if they have the same invariants $\{I_1= c,I_2= ab,I_3= a^4c^2-b^4\}$.
\end{corollary}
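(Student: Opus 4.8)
The plan is to prove both implications by exploiting the explicit description of the isomorphism group. By Proposition~\ref{prop:g4p3poles22}, any isomorphism between two curves of the form~\eqref{eqn:p=3,g=4,E=3,3} fixes $c$ and acts on the coefficient pair $(a,b)$ through the group $G\simeq D_4$ generated by the two maps recorded just before the preceding corollary. For the forward (``only if'') implication I would first observe that $I_1=c$ is literally fixed by every such isomorphism, and that $I_2$ and $I_3$ are $G$-invariant (indeed, by the preceding corollary they generate the whole invariant ring $\overline{\F}_3(\sqrt{c})[a,b]^G$). Hence two isomorphic standard-form curves, being related by an element of $G$, necessarily share all three invariant values.

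For the reverse implication I would show that the invariants form a reconstructing system, i.e.\ that the map sending the $G$-orbit of $(a,b,c)$ to the triple $(I_1,I_2,I_3)$ is injective; the statement then follows, and this also re-confirms the hypothesis of Corollary~\ref{cor:reconstruct}. Given a target triple with $I_1\neq 0$, recover $c=I_1$ immediately. Assume first that $I_2\neq 0$, so that $a,b\neq 0$; substituting $b=I_2/a$ into the defining relation for $I_3$ and clearing denominators yields a quadratic equation in the single quantity $a^4$. Its two roots are exactly $a^4$ and $b^4/c^2$ (their product being $I_2^4/c^2$, as forced by $ab=I_2$), and these two roots are interchanged by the order-two generator of $G$, while the four fourth roots of any one root are cyclically permuted by the order-four generator $(a,b)\mapsto(ia,-ib)$. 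Tracking $b=I_2/a$ along each choice, one checks that the eight resulting pairs $(a,b)$ constitute a single $G$-orbit, so, via Proposition~\ref{prop:g4p3poles22}, any two curves producing this triple are isomorphic.

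The remaining work, which I expect to be the main obstacle, is the careful treatment of the degenerate fibers, where the generic count of eight solutions drops. When $I_2=0$ one has $a=0$ or $b=0$: the solutions $(a,0)$ and $(0,b)$ must be shown to form one orbit, which follows because the order-two generator sends $(a,0)$ to a point of the form $(0,\sqrt{c}\,a)$ and $(\sqrt{c}\,a)^4=a^4c^2$. When $I_2=I_3=0$ the only solution is $(0,0)$, a single orbit. One should also handle the locus where the quadratic in $a^4$ has a double root, i.e.\ where $a^4=b^4/c^2$; there the order-two generator preserves each order-four orbit, so the fiber is again a single (smaller) $G$-orbit. Assembling the generic case with these special cases shows that equality of $\{I_1,I_2,I_3\}$ forces the two coefficient triples into the same $G$-orbit, hence the curves are isomorphic, completing the proof.
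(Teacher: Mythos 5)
Your proof has a genuine gap at its very first step, and it is a gap that matters: the claim that $I_3=a^4c^2-b^4$ is $G$-invariant is not something you may simply cite --- and in fact it fails. Under the pole-swapping generator of Proposition~\ref{prop:g4p3poles22}, which acts on coefficients by $(a,b,c)\mapsto\bigl(b\gamma/\lambda,\,a/(\gamma\lambda),\,c\bigr)$ with $c\gamma^2=\lambda$, one computes $(a')^4=b^4\gamma^4=b^4/c^2$ and $(b')^4=a^4/\gamma^4=a^4c^2$, so $I_3\mapsto b^4-a^4c^2=-I_3$. In characteristic $3$ we have $-1\neq 1$, so $I_3$ is only a semi-invariant for the sign character of $D_4$. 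Concretely, $y^3-y=x^2+x+\tfrac1{x^2}$ and $y^3-y=x^2+\tfrac1x+\tfrac1{x^2}$ are isomorphic via $x\mapsto 1/x$, yet their values of $I_3$ are $1$ and $-1$. So the ``only if'' direction cannot be established as written.

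The same sign problem undermines your reverse direction, and your computation there contains the slip that would have exposed it: substituting $b=I_2/a$ into $a^4c^2-b^4=I_3$ gives $c^2(a^4)^2-I_3a^4-I_2^4=0$, whose roots have product $-I_2^4/c^2$ (not $I_2^4/c^2$); they are $a^4$ and $-b^4/c^2$, whereas the order-two generator sends $a^4$ to $+b^4/c^2$. The fourth roots of $-b^4/c^2$ produce coefficient pairs lying in a \emph{different} $G$-orbit with the same triple $(I_1,I_2,I_3)$ (e.g.\ $(a,b,c)=(1,0,1)$ and $(0,b',1)$ with $(b')^4=-1$ both give $(1,0,1)$ but are not isomorphic), so the fiber of your invariant map is generically two orbits and injectivity fails. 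The paper offers no written proof of this corollary, so there is nothing to compare your route against; but the statement itself appears to carry a sign error inherited from the preceding corollary. The genuinely invariant combination is $a^4c^2+b^4$: it is fixed by both generators, and with $c$ and $ab$ it is reconstructing, since $a^4$ and $b^4/c^2$ are then the two roots of $t^2-(I_3/c^2)t+I_2^4/c^2=0$ and are interchanged by the pole swap, making your orbit-counting argument go through. I recommend redoing your argument with this corrected $I_3$, at which point your overall strategy (invariance check plus explicit fiber analysis, including the degenerate cases you flag) is sound.
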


\subsubsection{Three poles of order 1} 
We now consider the stratum $\mathcal{AS}_{4,4}$ for $p=3$.  There is a single partition yielding $g=4$ and $s=4$, namely $\vec{E}=\{2,2,2\}$.  From Table~\ref{tab:smallgenustable}, we see that this component has dimension $\dim \mathcal{AS}_{4,\{2,2,2\}}=3$.  From Theorem~\ref{T:normal}, the standard form of a curve in this component is \begin{equation}\label{eqn:p=3,g=4,E=2,2,2}
    C \colon y^3-y=ax+\frac{b}{x}+\frac{c}{x-1},
\end{equation} where $a,b,c\in\overline{\F}_3^{\times}$. 

Note that there is a subfamily of exceptional curves (see case~\eqref{case:exceptionalCurve1} of Corollary~\ref{cor:generalASisomorphism}) within this stratum, of the form $C_{a}=y^3-y=\frac{a}{x^3-x}$ for $a\in\overline{F}_3^{\times}$. A standard form for this curve is given by $y^3-y=-ax+\frac{a}{x}+\frac{a}{x-1}$. 

\begin{proposition} Let $C$ be as in~\eqref{eqn:p=3,g=4,E=2,2,2}. The curves in standard form that are isomorphic to $C$ are the ones in Table~\ref{tab:p3g4E222}.
\end{proposition}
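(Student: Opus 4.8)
The statement asserts that for the standard form
\[ C \colon y^3 - y = ax + \frac{b}{x} + \frac{c}{x-1} \]
in characteristic $3$ with $a,b,c \in \overline{\F}_3^{\times}$, the curves in standard form isomorphic to $C$ are exactly those tabulated in Table~\ref{tab:p3g4E222}. The natural approach is to apply Corollary~\ref{cor:generalASisomorphism}: every isomorphism is of the form $\varphi_{\lambda,M}$ with $M \in \GL_2(\overline{\F}_3)$ and $\lambda \in \F_3^{\times}$, provided $C$ is non-exceptional. The first thing I would do is address the exceptional subfamily $y^3 - y = \frac{a}{x^3-x}$ noted just before the statement; for curves in that subfamily I must invoke Lemma~\ref{lemma:explicitextraaut} case~(1) to account for the extra automorphism $\tau: (x,y) \mapsto (y,x)$, while for the generic (non-exceptional) curve only the $\varphi_{\lambda,M}$ suffice. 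I would handle the generic case first and treat the exceptional locus as a separate check.

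First I would pin down the admissible $M$. Since all three poles $P_\infty, P_0, P_1$ have the same order $1$, an isomorphism in standard form must permute the set $\{P_\infty, P_0, P_1\}$, because by Theorem~\ref{T:normal} the image curve must again have its three poles at $\infty, 0, 1$. This gives at most $|S_3| = 6$ candidate permutations, each realized by a unique fractional linear transformation $M$ fixing $\{0,1,\infty\}$ setwise (these are precisely the six anharmonic-group Möbius maps: $x, 1-x, 1/x, 1/(1-x), x/(x-1), (x-1)/x$, read through the pole action of Lemma~\ref{lem:pole_images}). For each such $M$, Lemma~\ref{lem:uniqueh} guarantees a unique $h_M(x)$, and since $g(x)$ has no $p$-divisible exponents and only simple poles, in fact $h_M(x) = 0$ in each case. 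Then I would substitute $M(x)$ into the right-hand side, combine the resulting simple fractions, and read off how $(a,b,c)$ transforms, together with the monic-type normalization constraint (here there is no $x^2$ term to normalize, but the scaling $\lambda \in \F_3^{\times}$ and the coefficient $\gamma$-type scalings from $M$ interact, as in Proposition~\ref{prop:g4p3poles22}). Carrying this out for all six permutations produces exactly the six rows of Table~\ref{tab:p3g4E222}, each expressing the new coefficients $(a',b',c')$ as explicit rational functions of $(a,b,c)$ and the scaling parameters.

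The main obstacle I expect is the bookkeeping of the scaling and the $\lambda$-constraint: each of the six Möbius maps fixing $\{0,1,\infty\}$ is rigid (no free parameter in $M$ beyond an overall scalar), but the partial-fraction recombination after substitution can introduce an overall constant that must be absorbed by $\lambda$ and by rescaling, and one must verify that for each permutation the resulting constraint on $\lambda \in \F_3^{\times}$ is actually satisfiable. The subtle point is checking that the pole at $P_1$, which is \emph{not} fixed by most of these maps, transforms correctly: for instance $x \mapsto 1/x$ sends $P_1 \mapsto P_1$ but swaps $P_0 \leftrightarrow P_\infty$, so the term $a x$ and $b/x$ exchange roles while $c/(x-1)$ maps to a new simple pole at $x=1$ whose residue must be renormalized. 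I would organize this as a case analysis over the six elements of $S_3$, and the last step is simply to confirm that the collected $(a',b',c')$ tuples coincide with the rows listed in Table~\ref{tab:p3g4E222}, and separately that on the exceptional locus $\tau$ contributes the swap $x \leftrightarrow y$ which, after returning to a standard model, yields no isomorphisms beyond those already listed.
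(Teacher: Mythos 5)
Your overall strategy is the paper's: by Corollary~\ref{cor:generalASisomorphism} every isomorphism is some $\varphi_{\lambda,M}$, the matrix $M$ must permute $\{P_\infty,P_0,P_1\}$, giving an $S_3$'s worth of M\"obius maps, and combining these with $\lambda\in\F_3^{\times}=\{\pm1\}$ yields the $12$ rows of Table~\ref{tab:p3g4E222} (note: the table has twelve rows, not six as you say --- six permutations for each value of $\lambda$). The paper handles the exceptional sub-family $y^3-y=\tfrac{a}{x^3-x}$ by citing Lemma~\ref{lem:standard1}, which already shows that every standard model of such a curve arises from a map of type~\eqref{eq:ASisom}; your plan to re-examine the extra automorphism $\tau$ directly is workable but duplicates that lemma, and as written you only assert (rather than argue) that $\tau$ contributes nothing new.

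The one concrete error is your claim that $h_M(x)=0$ for each of the six maps. This is false, and your fallback --- that the constant produced by the partial-fraction recombination ``must be absorbed by $\lambda$ and by rescaling'' --- would fail: an additive constant on the right-hand side cannot be removed by multiplying $y^3-y$ by $\lambda\in\F_3^\times$ or by rescaling $x$. For example, for $M=\left(\begin{smallmatrix}0&1\\1&0\end{smallmatrix}\right)$ one gets
\[
f(M(x))=bx+\frac{a}{x}-\frac{c}{x-1}-c,
\]
and the constant $-c$ is eliminated precisely by the translation $y\mapsto y+m$ with $m^3-m=-c$, i.e.\ $h_M(x)=m$ is a nonzero constant (this is exactly the content of Lemma~\ref{lem:uniqueh}, since the exponent $0$ is $p$-divisible). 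This is the sample computation the paper carries out, and without it the images you list would not actually be in standard form. The fix is routine --- solve an Artin--Schreier equation over $\Fpbar$ for each permutation --- but as stated your step would not go through.
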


\begin{proof}
Due to Corollary~\ref{cor:generalASisomorphism} and Lemma~\ref{lem:standard1}, isomorphisms, up to composition with powers of $\sigma:\,(x,y)\mapsto(x,y+1)$, of these standard forms must be of the form 
\[\varphi_{\lambda,M}:(x,y)\mapsto (Mx,\lambda y +h_M(x)),\] where $\lambda\in\F_3^{\times}=\{\pm1\}$ and $M\in\GL_2(\overline{\F}_{3})$ permutes the poles $P_{\infty}$, $P_0$, and $P_1$ in an $S_3$ action.  This gives 6 choices of $M$ and 2 choices of $\lambda$, yielding at most 12 labeled models.  These 12 models are all distinct, and the action of the isomporphisms on $C$ is described in Table~\ref{tab:p3g4E222}.  We compute a sample entry to demonstrate the process.  To exchange $P_{\infty}$ and $P_0$ while fixing $P_1$, we solve using the reasoning in Section~\ref{subsec: isomorphisms} to find $M=\begin{pmatrix} 0 & 1\\1&0 \end{pmatrix}$.  The effect of $M$ on $f(x)=ax+\frac{b}{x}+\frac{c}{x-1}$ is
\[M(f(x))=\frac{a}{x}+bx+\frac{c}{\frac{1}{x}-1}%
= bx+\frac{a}{x}+\frac{-c}{x-1}-c.\]
We convert the corresponding curve equation to standard form, which is accomplished by sending $y\mapsto y+m$, where $m\in\overline{\F}_3$ such that $m^3-m=-c$.  This yields the standard form \[\varphi_{\lambda,M}(C): y^3-y=bx+\frac{a}{x}+\frac{-c}{x-1}. \qedhere\]
\end{proof}

\begin{table}[ht]
\bgroup
\def\arraystretch{2.2}
\begin{tabular}{l|l|l||c}
$\bm{\lambda}$ & \textbf{Permutation}                      & \textbf{Matrix} $\bm{M}$                                                                                   & $\bm{f(x)}$ \\ \hline\hline
1         & $(P_{\infty})$ & \bgroup
\def\arraystretch{1}$\begin{pmatrix} 1 & 0\\0 & 1\end{pmatrix}$\egroup        & $ax+\frac{b}{x}+\frac{c}{x-1}$   \\

1         & $(P_{\infty} P_0)$               & \bgroup
\def\arraystretch{1}$\begin{pmatrix} 0 & 1\\ 1 & 0\\ \end{pmatrix}$ \egroup  & $bx+\frac{a}{x}-\frac{c}{x-1}$\\
1         & $(P_{\infty} P_1 P_0)$           & \bgroup
\def\arraystretch{1}$\begin{pmatrix} 0 & -1\\ 1 & -1\\ \end{pmatrix}$ \egroup& $-bx+\frac{c}{x}-\frac{a}{x-1}$   \\
1         & $(P_{\infty} P_0 P_1)$           & \bgroup
\def\arraystretch{1}$\begin{pmatrix} -1 & 1\\ -1 & 0\\ \end{pmatrix}$\egroup& $-cx-\frac{a}{x}+\frac{b}{x-1}$   \\
1         & $(P_0 P_1)$               & \bgroup
\def\arraystretch{1}$\begin{pmatrix} 1 & -1\\0 & -1\\ \end{pmatrix}$  \egroup & $-ax-\frac{c}{x}-\frac{b}{x-1}$  \\
1         & $(P_{\infty} P_1)$                      & \bgroup\def\arraystretch{1}$\begin{pmatrix} -1 & 0\\ -1 & 1\\ \end{pmatrix}$\egroup& $cx-\frac{b}{x}+\frac{a}{x-1}$  \\
-1        & $(P_{\infty})$ & \bgroup
\def\arraystretch{1}$\begin{pmatrix} 1 & 0\\0 & 1\end{pmatrix}$\egroup      & $-ax-\frac{b}{x}-\frac{c}{x-1}$  \\
-1        & $(P_{\infty} P_0)$               & \bgroup
\def\arraystretch{1}$\begin{pmatrix} 0 & 1\\ 1 & 0\\ \end{pmatrix}$\egroup  & $-bx-\frac{a}{x}+\frac{c}{x-1}$ \\
-1        & $(P_{\infty} P_1 P_0)$           & \bgroup
\def\arraystretch{1}$\begin{pmatrix} 0 & -1\\ 1 & -1\\ \end{pmatrix}$\egroup & $bx-\frac{c}{x}+\frac{a}{x-1}$ \\
-1        & $(P_{\infty} P_0 P_1)$           & \bgroup
\def\arraystretch{1}$\begin{pmatrix} -1 & 1\\ -1 & 0\\ \end{pmatrix}$\egroup & $cx+\frac{a}{x}-\frac{b}{x-1}$ \\
-1        & $(P_0 P_1)$               & \bgroup
\def\arraystretch{1}$\begin{pmatrix} 1 & -1\\0 & -1\\ \end{pmatrix}$\egroup  & $ax+\frac{c}{x}+\frac{b}{x-1}$ \\
-1        & $(P_{\infty} P_1)$                      & \bgroup
\def\arraystretch{1}$\begin{pmatrix} -1 & 0\\ -1 & 1\\ \end{pmatrix}$\egroup& $-cx+\frac{b}{x}-\frac{a}{x-1}$ \medskip

\end{tabular}
\egroup
\caption{Action of the isomorphism $\varphi_{\lambda, M}$ on the coefficients of \eqref{eqn:p=3,g=4,E=2,2,2}.}
\label{tab:p3g4E222}
\end{table}

These isomorphisms define a linear action of a group $G\simeq D_6$, the dihedral group of 12 elements, generated by $(a,b,c)\mapsto(c,a,-b)$ and $(a,b,c)\mapsto(b,a,-c)$ on the polynomial ring $\overline{\F}_3[a,b,c]$.

\begin{corollary}
    The elements $I_1= (abc)^2$, $I_2= (abc)(a-b-c)$, $I_3= ab+ac-bc$, and $I_4=a^2+b^2+c^2$ generate the ring of invariants for Artin-Schreier curves of genus $4$ in characteristic $3$ with $3$-rank $4$, i.e.\ $\overline{\F}_3[a,b,c]^G=\overline{\F}_3[I_1,I_2,I_3,I_4]$. These elements satisfy the relation $I_1(I_3+I_4)=I_2^2$.
\end{corollary}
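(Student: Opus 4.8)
The plan is to mirror the template of the preceding corollaries: check that $I_1,I_2,I_3,I_4$ are $G$-invariant, verify the stated relation, produce a reconstruction of $(a,b,c)$ from the invariant values up to the $G$-action, and then quote Corollary~\ref{cor:reconstruct}. It is worth emphasizing at the outset that here $|G|=12$ is divisible by $p=3$, so we are in the \emph{modular} case and neither the Molien--Weyl formula nor the standard non-modular algorithms apply; this is exactly the situation for which the reconstruction route through Corollary~\ref{cor:reconstruct} is designed.

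First I would verify invariance on the two generators $(a,b,c)\mapsto(c,a,-b)$ and $(a,b,c)\mapsto(b,a,-c)$. Both send $abc\mapsto-abc$ and $a-b-c\mapsto-(a-b-c)$, both fix $a^2+b^2+c^2$, and a direct substitution shows both fix $ab+ac-bc$; hence $I_1,I_3,I_4$ are fixed and $I_2=(abc)(a-b-c)$ is fixed as the product of two sign-reversing factors. The relation is immediate in characteristic $3$: since $-2\equiv1\pmod 3$ one has $(a-b-c)^2=a^2+b^2+c^2+(ab+ac-bc)=I_4+I_3$, so $I_2^2=(abc)^2(a-b-c)^2=I_1(I_3+I_4)$.

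The heart of the proof is reconstruction, and the key device is the substitution $a'=a$, $b'=-b$, $c'=-c$, which turns the awkward invariants into elementary symmetric functions of $a',b',c'$: one finds $e_2'=a'b'+a'c'+b'c'=-I_3$, $e_3'=a'b'c'=abc$ with $(e_3')^2=I_1$, and $e_1'=a'+b'+c'=a-b-c$ with $(e_1')^2=I_3+I_4$ and $e_1'e_3'=I_2$. Because every curve in~\eqref{eqn:p=3,g=4,E=2,2,2} has $a,b,c\in\overline{\F}_3^{\times}$, we have $I_1\neq0$; fixing a square root $e_3'=\sqrt{I_1}$ then determines $e_1'=I_2/e_3'$ (with the relation ensuring $(e_1')^2=I_3+I_4$), while the other square root merely replaces $(e_1',e_2',e_3')$ by $(-e_1',e_2',-e_3')$. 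Thus the invariants recover the multiset $\{a',b',c'\}$ as the roots of $t^3-e_1't^2+e_2't-e_3'$, up to the single ambiguity $(a',b',c')\mapsto(-a',-b',-c')$. I would then rewrite the twelve rows of Table~\ref{tab:p3g4E222} in the primed coordinates and observe that the three-cycles act as the pure cyclic permutations of $(a',b',c')$, the transpositions act as transpositions composed with a global negation, and $\sigma^3:(a,b,c)\mapsto(-a,-b,-c)$ supplies that global negation by itself; together these exhaust all permutations of $\{a',b',c'\}$ and, independently, the global sign. Consequently the $G$-orbit of $(a,b,c)$ is exactly the multiset $\{a',b',c'\}$ taken up to global negation --- precisely the data the invariants pin down --- so $\{I_1,I_2,I_3,I_4\}$ is a reconstructing system.

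With reconstruction in hand, Corollary~\ref{cor:reconstruct} gives $\overline{\F}_3[a,b,c]^G=\widehat{\widetilde{A}}$ for $A=\overline{\F}_3[I_1,I_2,I_3,I_4]$, and it remains to identify this closure with $A$. For this I would record the structural picture forced by the relation: the common zero locus $V(I_1,I_3,I_4)$ is just the origin (if $abc=0$, say $a=0$, then $I_3=-bc$ and $I_4=b^2+c^2$ force $b=c=0$, and the other cases are symmetric), so $\{I_1,I_3,I_4\}$ is a homogeneous system of parameters over which $I_2$ is integral of degree $2$ via $I_2^2=I_1(I_3+I_4)$; thus $A=\overline{\F}_3[I_1,I_3,I_4]\oplus I_2\,\overline{\F}_3[I_1,I_3,I_4]$ is a Cohen--Macaulay hypersurface with primary invariants $I_1,I_3,I_4$ and secondary invariants $1,I_2$. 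A Serre-criterion check (the singular locus of this hypersurface, cut out by $I_1=I_2=0$ and $I_3+I_4=0$, has codimension $2$) shows $A$ is normal, and the point-level reconstruction shows $\operatorname{Frac}(A)$ is already the full invariant field, so the normalization and purely inseparable closure are trivial and $\widehat{\widetilde{A}}=A$. The main obstacle is the reconstruction step, and within it the sign bookkeeping: one must check carefully that the permutation-with-sign action tabulated in Table~\ref{tab:p3g4E222} collapses, in the primed coordinates, to the full symmetric group together with a single global sign, so that nothing finer than ``multiset up to sign'' is retained.
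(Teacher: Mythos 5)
Your proof is correct, and while it shares the paper's overall strategy --- exhibit a reconstruction of $(a,b,c)$ from the invariant values and invoke Corollary~\ref{cor:reconstruct} --- the execution is genuinely different and, in two respects, more complete. The paper's proof is a two-sentence sketch: it asserts that a ``naive solve-and-back-substitution'' yields a polynomial equation of degree $6$ whose roots lie in a finite extension of the field of definition of the invariants, so a standard-form model exists. Your substitution $(a',b',c')=(a,-b,-c)$ replaces this elimination by structure: it identifies $G$ in the primed coordinates as $S_3\times\{\pm1\}$ acting by permutation and global sign (your row-by-row check of Table~\ref{tab:p3g4E222} verifies this), turns $I_1,I_2,I_3,I_4$ into $(e_3')^2$, $e_1'e_3'$, $-e_2'$, $(e_1')^2-e_2'$, and reduces reconstruction to solving the cubic $t^3-e_1't^2+e_2't-e_3'$ with the single ambiguity $(e_1',e_3')\mapsto(-e_1',-e_3')$; the paper's degree-$6$ equation is essentially the product of the two resulting cubics. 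What your route buys beyond readability is that it proves the converse direction the paper leaves implicit: not merely that \emph{some} solution exists, but that \emph{all} triples with the given invariant values form a single $G$-orbit (multiset up to global negation), which is what ``reconstructing system'' actually requires. You also supply the step the paper (here and in its other corollaries) glosses over, namely why $\widehat{\widetilde{A}}=A$, via the HSOP $\{I_1,I_3,I_4\}$, the free module decomposition $A=\overline{\F}_3[I_1,I_3,I_4]\oplus I_2\,\overline{\F}_3[I_1,I_3,I_4]$, and normality of the resulting hypersurface; your observation that $|G|=12$ is divisible by $p=3$, so the non-modular machinery is unavailable and this closure argument is not optional, is a point the paper does not make. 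All the individual computations you perform (invariance on the two generators, the characteristic-$3$ identity $(a-b-c)^2=I_3+I_4$, the vanishing locus of $\{I_1,I_3,I_4\}$) check out.
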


\begin{proof}
A model as in~\eqref{eqn:p=3,g=4,E=2,2,2} can be reconstructed from this set of invariants through a naive solve-and-back-substitution procedure that results in a polynomial equation of degree 6. Since all the roots of this equation belong to an (at most degree 6) extension of the (finite) field of definition of the invariants, there is a solution that defines coefficients for~\eqref{eqn:p=3,g=4,E=2,2,2}.
\end{proof}

\begin{remark}
The exceptional curves in this family occur exactly when $I_2=I_3=I_4=0$. 
\end{remark}

\subsection{Genus 4, characteristic 5} In this case there are two strata.

\subsubsection{One pole of order 3}
Here, the $5$-rank is $s=0$ and we have $\vec{E}=\{4\}$.  This corresponds to a curve $C_f$ where $f$ has one pole of order 3, with standard form as given in case~\eqref{case:r=0} of Theorem~\ref{T:normal}:
\begin{equation}\label{eqn:p=5,g=4,E=4}
C:y^5-y=x^3+ax^2
\end{equation}
with $a\in\overline{\F}_5$.  When $a=0$, the curve has additional automorphisms as in Lemma~\ref{lemma:explicitextraaut}.

\begin{proposition}\label{prop:isom_p=5,g=4,E=4}
    Every isomorphism between curves in standard form as in~\eqref{eqn:p=5,g=4,E=4} is given, up to composition with powers of $\sigma:\,(x,y)\mapsto(x,y+1)$, by $\varphi_{\lambda,M}$ with $\lambda=\alpha^3$ and $M=\begin{pmatrix}
        \alpha & \beta\\
        0 & 1
    \end{pmatrix}$, where $\alpha\in\overline{\F}_5$ with $\alpha^{12}=1$ and $\beta\in\left\{0,-\frac{2a}{3}\right\}$.
\end{proposition}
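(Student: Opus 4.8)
The plan is to reduce every isomorphism to the normalized shape $\varphi_{\lambda,M}$ and then pin down $\lambda$ together with the entries of $M$ by imposing the three standard-form conditions on the image curve. For $a\neq 0$ the curve in~\eqref{eqn:p=5,g=4,E=4} is not exceptional, so Corollary~\ref{cor:generalASisomorphism} guarantees that every isomorphism has the form $\varphi_{\lambda,M}$ as in~\eqref{eq:standardASisom} with $M=\begin{pmatrix}\alpha & \beta\\ \gamma & \delta\end{pmatrix}\in\GL_2(\overline{\F}_5)$ and $\lambda\in\F_5^{\times}$. For $a=0$ the curve is exceptional of type~\eqref{case:exceptionalCurve2}, but Lemma~\ref{lem:standard2} shows that its only standard model is $y^5-y=x^3$ and that any isomorphism between standard models is again realized by a transformation of the form~\eqref{eq:ASisom}; hence we may assume $\varphi=\varphi_{\lambda,M}$ in all cases.

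First I would force $M$ to fix the pole. The function $f(x)=x^3+ax^2$ has a single pole, at $P_\infty$, and the image curve must also be in standard form with its unique pole at $P_\infty$. Since $f(M(x))$ acquires a pole exactly where $M(x)=\infty$, that is, where $\gamma x+\delta=0$, the pole remains at $P_\infty$ precisely when $\gamma=0$. Normalizing the scalar ambiguity in $M$ by setting $\delta=1$, we obtain $M(x)=\alpha x+\beta$ with $\alpha\in\overline{\F}_5^{\times}$.

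Next I would substitute and read off coefficients. Expanding in characteristic $5$,
\[ f(M(x))=(\alpha x+\beta)^3+a(\alpha x+\beta)^2=\alpha^3 x^3+\alpha^2(3\beta+a)x^2+\alpha\beta(3\beta+2a)x+\beta^2(\beta+a). \]
Because $\deg f(M(x))=3<5$, the only monomial with $p$-divisible exponent is the constant term, so by Lemma~\ref{lem:uniqueh} the unique normalizing polynomial $h_M$ is the constant $h_0$ determined by $h_0^5-h_0=\beta^2(\beta+a)$, and the image curve is $y^5-y=\tilde f(x)$ with
\[ \tilde f(x)=\lambda^{-1}\left[\alpha^3 x^3+\alpha^2(3\beta+a)x^2+\alpha\beta(3\beta+2a)x\right]. \]
Imposing the standard-form conditions then finishes the argument: monicity in $x$ forces $\lambda^{-1}\alpha^3=1$, i.e.\ $\lambda=\alpha^3$; vanishing of the linear term forces $\alpha\beta(3\beta+2a)=0$, whence $\beta=0$ or $\beta=-\frac{2a}{3}$ since $\alpha\neq 0$; and the constraint $\lambda\in\F_5^{\times}$ from Corollary~\ref{cor:generalASisomorphism}, together with $|\F_5^{\times}|=4$, gives $1=\lambda^4=\alpha^{12}$. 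Conversely, each such pair $(\lambda,M)$ does carry a standard form to a standard form, so these are exactly the isomorphisms claimed.

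The computation itself is routine binomial expansion, so the genuine care points are bookkeeping ones. The main obstacle I anticipate is getting the action convention exactly right: one must track how $\varphi_{\lambda,M}$ transforms the defining equation so as to obtain $\lambda=\alpha^3$ rather than its inverse, and verify that $h_M$ perturbs only the constant term and hence imposes no further condition on $\alpha$ and $\beta$. The exceptional value $a=0$ must be handled separately via Lemma~\ref{lem:standard2}. I expect the equivalence $\lambda\in\F_5^{\times}\Leftrightarrow\alpha^{12}=1$ to be the step most worth spelling out, since it relies on the fact that all fourth roots of unity in $\overline{\F}_5$ already lie in $\F_5^{\times}$, so that $(\alpha^3)^4=1$ is genuinely equivalent to $\alpha^3\in\F_5^{\times}$.
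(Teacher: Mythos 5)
Your proof is correct and follows essentially the same route as the paper: force $\gamma=0$ by pole preservation, normalize $\delta=1$, expand $f(\alpha x+\beta)$, and read off $\lambda=\alpha^3$, $\alpha^{12}=1$, and $\beta\in\{0,-\tfrac{2a}{3}\}$ from monicity, the $\F_5^{\times}$ constraint, and the vanishing linear term. Your treatment is in fact slightly more careful in two spots the paper glosses over: you handle the exceptional case $a=0$ explicitly via Lemma~\ref{lem:standard2}, and your constant term $\beta^2(\beta+a)$ corrects the paper's $a\beta^2$ (immaterial, since it is absorbed into $h_M$ either way).
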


\begin{proof}
    Let $M\colonequals\begin{pmatrix}
\alpha & \beta \\
\gamma & \delta \\
\end{pmatrix}\in \GL_2(\overline{\F}_5)$. Any isomorphism $\varphi_{\lambda,M}$ between standard forms given by~\eqref{eqn:p=5,g=4,E=4} must preserve the pole at infinity, so $\gamma=0$.  Without loss of generality, assume $\delta=1$. We then obtain the image curve with model
$$y^5-y=\frac{1}{\lambda}\alpha^3x^3+(3\alpha^2\beta+a\alpha^2)x^2+(3\alpha\beta^2+2a\alpha\beta)x+a\beta^2.$$
Since the right hand side must be monic, we have $\alpha^3=\lambda$. Since $\lambda\in\F_5^{\times}$, $\lambda^4=1$, so $\alpha^{12}=1$.  Furthermore, since the standard form requires a vanishing $x$-term, it follows that $3\alpha\beta^2+2a\alpha\beta=0$, so $\beta=-\frac{2a}{3}$ or $\beta=0$. Finally, we choose an appropriate $h(x)=h_0\in\overline{\F}_5$ to eliminate the constant term $a\beta^2$.  
\end{proof}

We obtain a linear action of a group $G\simeq \Z/12\Z$ generated by $a\mapsto \alpha a$ with $\alpha^{12}=1$ on the polynomial ring $\overline{\F}_5[a]$.

\begin{corollary}
    The element $I_1= a^{12}$ generates the ring of invariants for Artin-Schreier curves of genus $4$ in characteristic $5$ with $5$-rank $0$, 
i.e.\ $\overline{\F}_5[a]^G=\overline{\F}_5[I_1]$.
\end{corollary}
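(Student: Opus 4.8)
The plan is to reuse, almost verbatim, the argument already deployed for the other one-parameter strata (for instance the genus~$3$, characteristic~$3$ stratum with a single pole of order~$4$), since the group action here has exactly the same shape: a cyclic group $G \simeq \Z/12\Z$ acting on $\overline{\F}_5[a]$ by scaling $a$ by $12$th roots of unity. First I would record that $I_1 = a^{12}$ is $G$-invariant, which is immediate from the description of the action: under the generator, $a \mapsto \alpha a$ with $\alpha^{12}=1$ (equivalently $a \mapsto a/\alpha$, as comes out of Proposition~\ref{prop:isom_p=5,g=4,E=4} with $\beta = 0$ and $\lambda = \alpha^3$), so $a^{12} \mapsto \alpha^{\pm 12} a^{12} = a^{12}$.

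The substantive step is to verify that $\{I_1\}$ is a reconstructing system. Here I would argue that from a value $I_1 = a^{12}$ the coefficient $a$ is determined up to a $12$th root of unity, and that any two such choices $a$ and $\alpha a$ (with $\alpha^{12}=1$) yield standard-form curves $y^5 - y = x^3 + a x^2$ that are isomorphic precisely via the isomorphism of Proposition~\ref{prop:isom_p=5,g=4,E=4} (taking $\beta = 0$). Thus the value of $I_1$ pins down the standard-form curve up to isomorphism, which is exactly the reconstruction property.

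With reconstruction established, Corollary~\ref{cor:reconstruct} applies to $A = \overline{\F}_5[I_1]$ and gives $\overline{\F}_5[a]^G = \widehat{\widetilde{A}}$, the purely inseparable closure of the normalization of $A$. The final step is to see that this closure collapses to $A$ itself: $\overline{\F}_5[I_1]$ is a polynomial ring in one variable, hence normal, so $\widetilde{A} = A$; and if $f \in \overline{\F}_5[a]$ satisfies $f^{5^r} \in \overline{\F}_5[a^{12}]$, then by the Frobenius identity every exponent of $a$ occurring in $f$ is multiplied by $5^r$ and must become divisible by $12$, which (since $\gcd(5^r,12)=1$) forces those exponents to be divisible by $12$ already, i.e.\ $f \in A$. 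Hence $\overline{\F}_5[a]^G = \overline{\F}_5[I_1]$.

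I do not anticipate a real obstacle in this case; the only point needing care is confirming that $12$ is exactly the correct exponent, i.e.\ that $G$ realizes scaling by a \emph{primitive} $12$th root of unity so that no proper power $a^k$ with $k < 12$ is invariant. This follows from $G \simeq \Z/12\Z$ together with Proposition~\ref{prop:isom_p=5,g=4,E=4}. As a sanity check one can bypass the closure argument entirely: an invariant monomial $a^k$ requires $\alpha^k = 1$ for every $12$th root of unity $\alpha$, forcing $12 \mid k$, so the invariant ring is spanned by powers of $a^{12}$ and $\overline{\F}_5[a]^G = \overline{\F}_5[a^{12}] = \overline{\F}_5[I_1]$ outright.
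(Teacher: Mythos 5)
Your proposal is correct and follows essentially the same route as the paper, whose proof simply notes that $\{I_1\}$ is invariant and reconstructing and then invokes Corollary~\ref{cor:reconstruct}. You additionally spell out why the purely inseparable closure of the normalization collapses to $\overline{\F}_5[a^{12}]$ (using $\gcd(5^r,12)=1$), a detail the paper leaves implicit but which is a worthwhile check.
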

\begin{proof} It is straightforward to check that  $\{I_1\}$ is invariant and a reconstructing set. Now apply Corollary~\ref{cor:reconstruct}.
\end{proof}

\subsubsection{Two poles of order 1}
In this case the $5$-rank is $s=4$ and we have $\vec{E}=\{2,2\}$.  This corresponds to a curve $C_f$ where $f$ has two poles of order 1, with standard form as given in case~\eqref{case:r=1} of Theorem~\ref{T:normal}:
\begin{equation}\label{eqn:p=5,g=4,E=2,2}
y^5-y=x+\frac{a}{x}
\end{equation}
with $a\in\overline{\F}_5^\times$.

\begin{proposition} Every isomorphism between curves in standard form as in \eqref{eqn:p=5,g=4,E=2,2} is given, up to composition with powers of $\sigma:\,(x,y)\mapsto(x,y+1)$, by 
\begin{equation}\label{eq:isomorphiamp3g3s4}
    (x,y)\mapsto (\lambda x,\lambda y)
\end{equation}
or by
\begin{equation}\label{eq:isomorphiamp3g3s4b}
    (x,y)\mapsto \left(\frac{a}{\lambda x} ,\lambda y\right),
\end{equation}
where $\lambda\in\F_5^{\times}$.
\end{proposition}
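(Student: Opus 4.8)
The plan is to follow the same template as the two-pole cases already treated in Propositions~\ref{prop:g3isomorphism_ex2} and~\ref{prop:g4p3poles22}. First I would observe that no member of the family \eqref{eqn:p=5,g=4,E=2,2} is exceptional: by Remark~\ref{rem:genusexcep} the only exceptional curve of genus $4$ in characteristic $5$ is the single-pole curve $y^5-y=1/x^3$ of type~\eqref{case:exceptionalCurve2} (with $\lambda=3\mid 6$), together with the high-genus type~\eqref{case:exceptionalCurve1} curve of genus $(p-1)^2=16$; every curve in \eqref{eqn:p=5,g=4,E=2,2}, by contrast, has two poles $P_\infty$ and $P_0$ and genus $4$. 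Consequently Corollary~\ref{cor:generalASisomorphism} applies, and every isomorphism between two such standard forms is some $\varphi_{\lambda,M}$ as in \eqref{eq:standardASisom}, with $M=\left(\begin{smallmatrix}\alpha&\beta\\\gamma&\delta\end{smallmatrix}\right)\in\GL_2(\overline{\F}_5)$ and $\lambda\in\F_5^\times$.

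Next, since both poles have order $1$ and the image must again be in standard form, $M$ must permute the set $\{P_\infty,P_0\}$. By Lemma~\ref{lem:pole_images} this splits the analysis into two cases: either $M$ fixes both poles, which forces $\gamma=\beta=0$, or $M$ interchanges them, which forces $\alpha=\delta=0$. In each case I would substitute $x\mapsto M(x)$ and $y\mapsto\lambda y+h_M(x)$ into $y^5-y=x+a/x$, use $\lambda^5=\lambda$ to rewrite the left-hand side as $\lambda(y^5-y)$, and then impose that the result is a monic standard form. Because the only exponents occurring on the right-hand side are $\pm 1$, neither divisible by $5$, no $p$-power monomials are created, so $h_M=0$ by Lemma~\ref{lem:uniqueh}.

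In the pole-fixing case, normalizing $\delta=1$ gives $M(x)=\alpha x$ and the image curve $\lambda(y^5-y)=\alpha x+\tfrac{a}{\alpha x}$; dividing by $\lambda$ and requiring the coefficient of $x$ to equal $1$ forces $\alpha=\lambda$, yielding the isomorphism $(x,y)\mapsto(\lambda x,\lambda y)$ of \eqref{eq:isomorphiamp3g3s4}. In the pole-swapping case, writing $M(x)=c/x$ with $c=\beta/\gamma$ gives the image $\lambda(y^5-y)=\tfrac{a}{c}x+\tfrac{c}{x}$, and monicity in $x$ pins down $c=a/\lambda$, so that $M(x)=\tfrac{a}{\lambda x}$ and the isomorphism is $(x,y)\mapsto\bigl(\tfrac{a}{\lambda x},\lambda y\bigr)$ as in \eqref{eq:isomorphiamp3g3s4b}. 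I expect the only point requiring care to be the pole-swapping branch, namely correctly tracking the scalar $c$ through the substitution and confirming via the monicity requirement that it is determined as $a/\lambda$ rather than left as a free parameter; the rest is routine coefficient comparison.
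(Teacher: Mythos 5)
Your proposal is correct and follows essentially the same route as the paper: reduce to $\varphi_{\lambda,M}$ via Corollary~\ref{cor:generalASisomorphism}, split into the pole-fixing and pole-swapping cases, and use monicity of the standard form to pin down the remaining parameter ($\alpha=\lambda$ in one branch, $c=a/\lambda$, equivalently $\gamma=\lambda/a$, in the other). Your explicit check that no curve in this family is exceptional and your remark that $h_M=0$ are small justifications the paper leaves implicit, but they do not change the argument.
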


\begin{proof}
    Let $M\colonequals\begin{pmatrix}
\alpha & \beta \\
\gamma & \delta \\
 
\end{pmatrix}\in \GL_2(\overline{\F}_5)$ and $\lambda\in\F_5^{\times}$. Any isomorphism $\varphi_{\lambda,M}$ between standard forms here must either fix the poles $P_{\infty}$ and $P_0$ or swap them. So, up to scalar multiple, $M$ must be of the form $M_\alpha=\begin{pmatrix}
\alpha & 0 \\
0 & 1 \\
\end{pmatrix}$ or $M_\gamma=\begin{pmatrix}
0 & 1 \\
\gamma & 0 \\
\end{pmatrix}$. Let $f(x)=x+\frac{a}{x}$. In the first case, we obtain \[\varphi_{\lambda,M_{\alpha}}(C_f):y^5-y=\frac{\alpha}{\lambda}x+\frac{a}{\alpha\lambda x}.\]  
Since the polynomial part on the right hand side must be monic, we must have $\alpha=\lambda$, leading to the standard form $y^5-y=x+\frac{a}{\lambda^2 x}$ for  $\varphi_{\lambda,M_{\alpha}}(C_f)$. So the isomorphism acts on the standard form model by sending $a\mapsto \frac{a}{\lambda^2}=\lambda^2a$.  In the second case, the image curve is \[\varphi_{\lambda,M_{\gamma}}(C_f):y^5-y=\frac{1}{\lambda\gamma x}+\frac{a\gamma x}{\lambda}.\]  Again, since the polynomial part must be monic in standard form, we have $a\gamma=\lambda$, so the standard form of $\varphi_{\lambda,M_{\gamma}}(C_f)$ can be simplified to $y^5-y=x+\frac{a}{\lambda^2 x}$.  This isomorphism again acts on the standard form model by sending $a\mapsto \frac{a}{\lambda^2}=\lambda^2a$. 
\end{proof}

We obtain a linear action of a group $G\simeq \Z/2\Z$ generated by $a\mapsto -a$ on the polynomial ring $\overline{\F}_5[a]$.

\begin{corollary} \label{prop:twoPolesOfOrder1}
    The element $I_1= a^2$ generates the ring of invariants for Artin-Schreier curves of genus $4$ in characteristic $5$ with $5$-rank $4$, 
i.e.\ $\overline{\F}_5[a]^G=\overline{\F}_5[I_1]$.
\end{corollary}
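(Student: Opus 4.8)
The plan is to follow the same strategy used for the preceding corollaries: verify directly that $I_1 = a^2$ is an invariant, confirm that the singleton $\{I_1\}$ is a reconstructing set, and then invoke Corollary~\ref{cor:reconstruct}. First I would check invariance: since $G \simeq \Z/2\Z$ acts on $\overline{\F}_5[a]$ through the generator $a \mapsto -a$, we have $a^2 \mapsto (-a)^2 = a^2$, so $I_1 \in \overline{\F}_5[a]^G$.

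Next I would establish the reconstructing property. Given a value of $I_1$, any $a \in \overline{\F}_5^{\times}$ with $a^2 = I_1$ determines a standard-form curve $y^5 - y = x + a/x$ as in~\eqref{eqn:p=5,g=4,E=2,2}, and the only ambiguity is the sign of $a$. Since $a$ and $-a$ differ precisely by the generator of $G$, they yield isomorphic curves by the isomorphism established immediately before the statement. Hence $I_1$ reconstructs the curve up to isomorphism, and Corollary~\ref{cor:reconstruct} gives $\overline{\F}_5[a]^G = \widehat{\widetilde{\overline{\F}_5[I_1]}} = \overline{\F}_5[I_1]$, the last equality holding because $\overline{\F}_5[a^2]$ is already integrally closed in $\overline{\F}_5[a]$ and we are in the non-modular case ($5 \nmid |G|$), so both the normalization and the purely inseparable closure are trivial.

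I do not expect any genuine obstacle here: the statement is essentially the classical fact that the even polynomials form the invariant ring of the sign involution. Indeed, the whole corollary admits a one-line elementary proof bypassing Corollary~\ref{cor:reconstruct}: an invariant $f$ satisfies $f(a) = f(-a)$, so $f$ is an even polynomial and therefore lies in $\overline{\F}_5[a^2]$, giving $\overline{\F}_5[a]^G = \overline{\F}_5[a^2]$ at once. The only step meriting a moment's attention is the reconstructing claim, but it is immediate from the explicit description of the $G$-action recorded just above.
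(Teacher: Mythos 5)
Your proposal is correct and follows the same route as the paper, which simply observes that $\{I_1=a^2\}$ is a reconstructing set (using the isomorphism $a\mapsto -a$ from the preceding proposition) and appeals to Corollary~\ref{cor:reconstruct}; you merely spell out the invariance check and the normalization/inseparable-closure triviality in more detail. Your closing remark that the result also follows at once from the classical fact that the invariants of the sign involution are the even polynomials is a valid, more elementary shortcut, but it does not change the substance of the argument.
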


\begin{proof} The set $\{I_1=a^2\}$ is a reconstructing set for curves in the stratum with standard form as in~\eqref{eqn:p=5,g=4,E=2,2}.
\end{proof}

\begin{remark} The curves in this family are all hyperelliptic, but of characteristic different from~$2$. They can be written as $v^2=u^{10}-2u^6+u^2-4a$ with $v=2x-y^5+y$ and $u=y$. Invariants for them inside the family of hyperelliptic curves of genus 4 in characteristic 5 (so inside a larger family) could also be computed with the techniques of \cite{LR12}. 
\end{remark}

\section{Conclusion}
In this paper, we determined reconstructing invariants for Artin-Schreier curves of genus~3 and~4 in characteristic $p>2$.  
Our results on genus 3 represent an important step toward the full characterization of invariants of curves of that genus.

Moving to higher genus curves presents many additional challenges.  In particular, curves~$C_f$ where $f$ has many poles of the same order exhibit many ismorphisms between standard forms.  As the genus grows, the problem becomes more and more complicated. Instead of seeking a general closed form for invariants of all Artin-Schreier curves, an algorithmic method of determining invariants seems appropriate.  Devising the right framework and specializing the results of invariant theory to this setting is the subject of ongoing work.

\bibliography{synthbib.bib}

\bibliographystyle{alpha}
\end{document}